\documentclass[11pt]{article}
\usepackage[margin =1in]{geometry}
\usepackage{amssymb,amsthm,amsmath}
\usepackage{enumerate}
\usepackage{hyperref}
\usepackage{cite}
\usepackage[capitalize]{cleveref}
\usepackage{enumitem}
\usepackage{color}
\usepackage{cancel}
\usepackage{pgf}
\usepackage{svg}
\usepackage{pgfplots}
\usepackage{import}

\usepackage[utf8]{inputenc} 
\usepackage[T1]{fontenc}    
\usepackage{hyperref}       
\usepackage{url}            
\usepackage{booktabs}       
\usepackage{amsfonts}       
\usepackage{nicefrac}       
\usepackage{microtype}
\usepackage{multirow}
\usepackage{xfrac}
\usepackage{todonotes}
\usepackage{titlesec}
\usepackage{caption}

\usepackage{multirow}

\usepackage{caption}
\usepackage{float}

\titleformat{\subsubsection}[runin]
{\normalfont\normalsize\bfseries}{\thesubsubsection}{1em}{}


\usepackage{graphicx}


\usepackage{appendix}
\numberwithin{equation}{section}

\newcommand{\inclu}[0] {\ar@{^{(}->}}

\newcommand{\R}{\mathbb{R}}


\newcommand{\Tr}{\text{Tr}}

\newcommand{\prox}{\text{prox}}

\newcommand{\argmin}{\operatornamewithlimits{argmin}}


\newtheorem{thm}{Theorem}[section]
\newtheorem{theorem}{Theorem}[section]

\newtheorem{proposition}[thm]{Proposition}

\newtheorem{lemma}[thm]{Lemma}

\newtheorem{corollary}[thm]{Corollary}

\newtheorem{conjecture}[thm]{Conjecture}

\crefname{claim}{claim}{claims}
\Crefname{claim}{Claim}{Claims}
\crefname{lem}{lemma}{lemmas}
\Crefname{lem}{Lemma}{Lemmas}
\crefname{algorithm}{algorithm}{algorithms}
\Crefname{algorithm}{Algorithm}{Algorithms}

\theoremstyle{definition}
\newtheorem{remark}{Remark}

\theoremstyle{definition}

\theoremstyle{definition}
\newtheorem*{claim*}{Claim}






\usepackage{mathtools}

\usepackage[algo2e, boxruled]{algorithm2e}

\usepackage[T1]{fontenc}
\usepackage{lmodern}
\crefname{figure}{Figure}{Figures}

\newcommand{\xStar}{x_\star}

\newcommand{\xSpecial}{x_\sigma}
\newcommand{\idxSpecial}{\sigma}

\newcommand{\fSpecial}{f_\sigma}
\newcommand{\gSpecial}{g_\sigma}

\newcommand{\cHat}{\hat{c}}
\newcommand{\cCrit}{c_\mathrm{crit}}

\newcommand{\cCritGrad}{c_{\mathrm{crit,grad}}}
\newcommand{\cOptGrad}{c_{\mathrm{opt,grad}}}

\newcommand{\detDenom}[1]{2N{#1}+4N{#1}i-2i^2+1}
\newcommand{\psiNum}[1]{({#1}-1)(2N{#1}-2N+1)(2N{#1}+1)}
\newcommand{\psiL}{\psi_{\ell}}

\newcommand{\psiU}{\psi_{u}}
\newcommand{\psiLHat}{\hat{\psi}_\ell}
\newcommand{\psiUHat}{\hat{\psi}_u}
\newcommand{\harmonicUpperBd}{\log N + \frac{1}{2N} + \gamma}
\newcommand{\harmonicLowerBd}{\log N + \frac{4N-1}{8N^2} + \gamma}

\newcommand{\cOpt}{c_{\text{opt}}}

\newcommand{\subVar}{s}
\newcommand{\subVarCrit}{\subVar_{\text{crit}}}
\newcommand{\sumH}[1]{\sum_{i=0}^{#1} h_i}
\newcommand{\sumSigH}{\sum_{j=0}^N \sigma_j \sum_{i=0}^{j-1} h_i}

\newcommand{\dimension}{m}

\newcommand{\g}[1]{\mathbf{g_{#1}}}
\renewcommand{\gg}[2]{\g{#1}\g{#2}^T + \g{#2}\g{#1}^T}
\newcommand{\gG}[1]{\g{#1}\g{#1}^T}

\newcommand{\xz}{\mathbf{x_0}}

\pgfplotsset{compat=1.18}

\begin{document}

    \title{On Averaging and Extrapolation for Gradient Descent}

    \author{Alan Luner\footnote{Johns Hopkins University, Department of Applied Mathematics and Statistics, \url{aluner1@jhu.edu}} \qquad Benjamin Grimmer\footnote{Johns Hopkins University, Department of Applied Mathematics and Statistics, \url{grimmer@jhu.edu}}}

	\date{}
	\maketitle

	\begin{abstract}
        This work considers the effect of averaging, and more generally extrapolation, of the iterates of gradient descent in smooth convex optimization. After running the method, rather than reporting the final iterate, one can report either a convex combination of the iterates (averaging) or a generic combination of the iterates (extrapolation). For several common stepsize sequences, including recently developed accelerated periodically long stepsize schemes, we show averaging cannot improve gradient descent's worst-case performance and is, in fact, strictly worse than simply returning the last iterate. In contrast, we prove a conceptually simple and computationally cheap extrapolation scheme strictly improves the worst-case convergence rate: when initialized at the origin, reporting $(1+1/\sqrt{16N\log(N)})x_N$ rather than $x_N$ improves the best possible worst-case performance by the same amount as conducting $O(\sqrt{N/\log(N)})$ more gradient steps. Our analysis and characterizations of the best-possible convergence guarantees are computer-aided, using performance estimation problems. Numerically, we find similar (small) benefits from such simple extrapolation for a range of gradient methods.
	\end{abstract}

    \section{Introduction} \label{Sect:Intro}
There is a long history of using strategic weightings of iterates to produce better solutions in convex optimization. In many nonsmooth optimization settings, returning the final iterate is known to provide provably suboptimal worst-case guarantees~\cite{NonsmoothLastIterate}. A common and well-studied resolution to this problem is to output instead an average (convex combination) of the iterates~\cite{grimmer2024primaldual,Gustavsson2015,NonsmoothOpt_Lan,Shamir2013}. In smooth optimization, on the other hand, averaging is not as commonplace; instead, one often simply returns the final iterate. One intuitive justification for not averaging is that most theory guarantees some monotone decrease, ensuring the last iterate outperforms the rest, e.g., decreasing objective values for gradient descent; however, monotonicity alone is not enough to reach this conclusion.\footnote{One can devise simple examples in which a monotone decrease is guaranteed and yet averaging is beneficial. For example, consider gradient descent with stepsizes $h=2-\epsilon$. Although this is guaranteed to decrease the objective value at each step, its worst-case convergence is attained on a quadratic function $f(x) = \frac{1}{2} x^2$ for which a final averaging step would improve the solution quality.} Beyond the convex combinations underlying averaging, one could consider extrapolations from the iterates, utilizing a not necessarily convex combination of them. Each step of a momentum method, like Polyak's heavy ball method, can be viewed as extrapolating from recent steps to a (hopefully) better next iterate. The use of more sophisticated extrapolation techniques, like Anderson acceleration~\cite{AndersonAccel,AndersonAccel_Recent}, have also proven useful in smooth optimization.

Basic questions remain about the importance (or lack thereof) of averaging and extrapolation in smooth optimization. Our primary focus in this paper is on smooth optimization via gradient descent. Extensions to other accelerated gradient methods will be considered in our numerics in \cref{Sect:Numerics}. For a convex function $f :\R^\dimension \to \R$ with $L$-Lipschitz gradient, gradient descent iterates
\begin{equation}\label{Eqn:GDStep}
    x_{k+1} = x_k - \frac{h_k}{L}\nabla f(x_k) \ ,
\end{equation}
given initial point $x_0$ and a pre-determined stepsize sequence $h = (h_0, \dots, h_{N-1})$. Existing convergence rate theory can handle gradient descent with stepsizes $h_k$ {\it constant} in $(0,2)$, $h_k$ {\it dynamically increasing} up towards two~\cite{Teboulle_4T_Analytical}, and $h_k$ as a special {\it silver} sequence of long stepsizes, frequently greatly exceeding length two~\cite{Silver_Accel}. Existing worst-case convergence analysis for all of these stepsize patterns applies to the final iterate, not utilizing any averaging. This motivates our first major question:
\begin{quote}
    \emph{Can averaging improve the worst-case performance guarantees of gradient descent (in objective gap or gradient size) for any common stepsizes choices (Constant, Dynamically Increasing, or Silver)?}
\end{quote}
We prove it cannot. See the informal statement of this result in \cref{Thm:IntroThmAvg} and \cref{Sect:Averaging} where this claim is formalized and proven.

The fact that averaging cannot help gradient descent motivates the consideration of reporting an extrapolation. Rather than returning a convex combination of the iterates, one could return a linear combination of gradient descent iterates. Our second major question is then:
\begin{quote}
    \emph{Can extrapolating improve the worst-case performance guarantees of gradient descent?}
\end{quote}
We prove it can and show, in fact, a very simple, computationally cheap extrapolation approach suffices. We consider the following intuitive extrapolation strategy. After $N$ steps of gradient descent, instead of returning $x_N$, report
\begin{equation*}
    x_0 + c(x_N-x_0)
\end{equation*}
for some fixed extrapolation factor $c>1$.
This amounts to extrapolating from $x_N$ away from $x_0$. This is natural as one may suspect the direction of progress from the initial iterate to the terminal iterate to ``roughly'' point towards optimal. With $x_0$ chosen as the origin, this simplifies to reporting $cx_N$, adding effectively no additional computational or memory storage costs. We exactly characterize the resulting improvement in worst-case performance guarantee under proper selection of the extrapolation factor $c$.
See the informal statement of this result in \cref{Thm:IntroThmExtrap} and \cref{Sect:SimpleExtrap} where this claim is formalized and proven.

To provide such exact descriptions of worst-case algorithm performance, we turn to performance estimation. The Performance Estimation Problem (PEP), first proposed by Drori and Teboulle \cite{FirstPEP}, established a new perspective framing a method's worst-case performance as an optimization problem. Building on this initial proposal, Taylor et al.~\cite{Interpolation} showed the PEP search for a worst-case problem instance is equivalent to a Semidefinite Program (SDP). This approach to optimization problems is a doubly important tool. First, PEP serves as a black box to numerically compute worst-case convergence rates under varied algorithmic configurations. Observing these numerical results can help identify patterns and shortcomings/slackness in current performance guarantees. Second, observing the exact worst-case problem instances and corresponding dual optimality certificates produced by PEP can often translate into proving new and improved convergence guarantees. For readers unfamiliar with PEP, we refer to the library and resources of~\cite{pepit2022} and the foundational early paper by Taylor et al.~\cite{Interpolation2}.

Particularly relevant to our investigation, the PEP framework has seen substantial use in the design of stepsizes for gradient descent. In their paper introducing PEP, Drori and Teboulle proved a tight $O(1/N)$ convergence bound \cite[Theorem 3.1]{FirstPEP} for gradient descent with constant stepsize $h\in(0,1]$. They also conjectured a similar bound \cite[Conjecture 3.1]{FirstPEP} for $h\in (1,2)$ based on strong numerical evidence from performance estimation. More recently, by solving a related nonconvex problem via branch-and-bound, Das Gupta et al.~\cite{BnB} numerically identified ``globally optimal'' stepsize patterns for fixed small $N\leq 25$, which were often much larger than length two. Through their general branch-and-bound method, one can optimize an algorithm's hyperparameters, such as the stepsize sequence or averaging weights, for any reasonably small fixed number of iterations; this ability to answer questions of global optimality offers a useful comparison for our later results. Motivated by these numerical results, Grimmer et al.~\cite{Grimmer_LongStep,Grimmer_Accel} used the PEP framework to analyze specific ``straightforward'' sequences with frequent long steps ($h_k>2$), eventually showing a slightly accelerated convergence rate of $O\left(1/N^{1.0564}\right)$. Concurrently,  Altschuler and Parrilo~\cite{Silver_Accel} following up on the earlier Master's thesis work~\cite{altschuler2018greed}, introduced the \textit{silver} stepsize sequence for gradient descent, which attains a stronger rate of $O\left(1/N^{1.2716}\right)$.

\subsection{Our Contributions}

Our two primary contributions show that averaging does not improve (and, in fact, strictly worsens) the worst-case convergence guarantees for gradient descent under common stepsize sequences, while a very simple extrapolation can strictly improve gradient descent performance. Our analysis in both settings is tight, having exactly matching worst-case problem instances derived from associated PEP solutions. We measure solution quality both by function value gap and gradient norm. These two main results are informally stated below with $\Delta_N$ denoting the standard simplex in $\mathbb{R}^N$ and $\mathfrak{F}_{L,D}$ denoting the set of all considered smooth convex problem instances: namely, all pairs $(x_0,f)$ with $f$ convex and $L$-smooth and $x_0$ at most distance $D$ from a minimizer of $f$.

\begin{theorem}[Informal, The Optimal Averaging is to Not Average] \label{Thm:IntroThmAvg}
    The optimal averaging of the iterates of gradient descent is to return the final iterate when the stepsizes $h_k$ are chosen as any of
    \begin{enumerate}
        \item[(i)] constant $h\in(0,1]$,
        \item[(ii)] constant $h\in (1,2)$ (assuming $N$ sufficiently large and \cite[Conjecture 3.1]{FirstPEP} holds),
        \item[(iii)] dynamically increasing as proposed by Teboulle and Vaisbourd~\cite{Teboulle_4T_Analytical},
        \item[(iv)] silver long steps as proposed by Altschuler and Parrilo~\cite{Silver_Accel} (assuming $N$ is one less than a power of two and our \cref{Conj:SilverRate} holds).
    \end{enumerate}
    That is, for the stepsize sequences above, $\sigma = (0,\dots,0,1)$ is the unique minimizer of
    \begin{equation}\tag{\ref{Eqn:minMaxFVal}}
        \min_{\sigma\in\Delta_{N+1}} \max_{(x_0,f)\in \mathfrak{F}_{L,D}} f\left(\sum^{N}_{j=0} \sigma_j x_j\right) - \inf f = \frac{LD^2}{4\sumH{N-1}+2}
    \end{equation}
    and
    \begin{equation} \tag{\ref{Eqn:minMaxGradNorm}}
        \min_{\sigma\in\Delta_{N+1}} \max_{(x_0,f)\in \mathfrak{F}_{L,D}} \left\|\nabla f\left(\sum^{N}_{j=0} \sigma_j x_j\right)\right\| =\frac{LD}{\sumH{N-1}+1} \ .
    \end{equation}
    
\end{theorem}

\begin{remark} \label{Rem:SilverProven}
    After the initial release of this manuscript, two preprints claimed to respectively prove both conjectures referenced above. Kim ~\cite{Kim_ProofOfConstantStepGD} presents a proof of \cite[Conjecture 3.1]{FirstPEP} and Wang et al.~\cite{Wang_ProofOfConjecture} present a proof of \cref{Conj:SilverRate}. Given these results, the statement of \cref{Thm:IntroThmAvg} holds for silver stepsize patterns and for appropriate\footnote{Specifically, for $h$ less than or equal to the solution to the equation $\frac{1}{2Nh+1} = (1-h)^{2N}$.} constant stepsizes $h \in (1,2)$.
\end{remark}

\begin{theorem}[Informal, Strict Gain of $\sqrt{N/\log(N)}$ from Simple Extrapolation] \label{Thm:IntroThmExtrap}
    For any constant stepsize $h \in (0,1]$ and simple extrapolation factor $c$ up to size $1 + O\left(\frac{1}{\sqrt{N \log N}}\right)$, gradient descent has
    \begin{equation} \label{Eqn:IntroExtrapBoundGeneral}
        \max_{(x_0,f)\in \mathfrak{F}_{L,D}} f(x_0 + c(x_N-x_0)) - \inf f  = \frac{LD^2}{4Nhc+2} \ . 
    \end{equation}
    For example, setting $x_0=0$ without loss of generality,
    \begin{equation} \label{Eqn:IntroExtrapBoundSpec}
        \max_{(x_0,f)\in \mathfrak{F}_{L,D}} f\left(\left(1 + \frac{1}{4\sqrt{N\log(N)}}\right)x_N\right) - \inf f  = \frac{LD^2}{4Nh + \sqrt{\frac{N}{\log(N)}}h + 2} \ . 
    \end{equation}
\end{theorem}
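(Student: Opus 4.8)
The plan is to prove the exact identity \eqref{Eqn:IntroExtrapBoundGeneral} by the performance estimation methodology, which needs two matching bounds: an upper bound valid over all of $\mathfrak{F}_{L,D}$, obtained from an explicit dual certificate (a nonnegative combination of interpolation inequalities that telescopes to the claimed rate), and a lower bound, obtained from an explicit worst-case instance attaining it. With \eqref{Eqn:IntroExtrapBoundGeneral} in hand, \eqref{Eqn:IntroExtrapBoundSpec} is the substitution $c = 1 + \frac{1}{4\sqrt{N\log N}}$, since then $4Nhc + 2 = 4Nh + h\sqrt{N/\log N} + 2$; I would separately check this $c$ lies in the admissible range $1 + O(1/\sqrt{N\log N})$ and note that $K$ additional constant steps move the denominator to $4(N+K)h + 2$, so the improvement equals that of $K = \frac14\sqrt{N/\log N} = \Theta(\sqrt{N/\log N})$ extra gradient steps.

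First I would reduce to a clean PEP: by scaling take $L = D = 1$, and by translation take $x_0 = 0$, so the reported point is $y = c\,x_N$ with $x_k = -h\sum_{i<k}\nabla f(x_i)$ and $y = -ch\sum_{i=0}^{N-1}\nabla f(x_i)$. By \cite{Interpolation}, $\max_{(x_0,f)\in\mathfrak{F}_{1,1}} f(y) - \inf f$ equals the value of a semidefinite program over the Gram matrix of $\nabla f(x_0),\dots,\nabla f(x_{N-1}),\nabla f(y)$ and $x_0 - x_\star$, together with the scalar function values, subject to the Taylor interpolation inequalities among $x_0,\dots,x_N,y,x_\star$ (with $\nabla f(x_\star) = 0$).

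For the upper bound I would search for a sparse dual certificate: nonnegative multipliers on the consecutive-iterate inequalities relating $x_k$ to $x_{k+1}$, on the inequalities between each $x_k$ and $x_\star$, and on the two inequalities tying $y$ to $x_N$ and to $x_\star$ --- the latter encoding that the overshoot $y = cx_N$ has not moved past the minimizer by too much. The certificate behind Drori--Teboulle's bound $f(x_N) - \inf f \le \frac{1}{4Nh + 2}$ for $h\in(0,1]$ \cite{FirstPEP} has weights following a simple arithmetic/harmonic pattern; I would modify it by inflating the contribution of the final segment by the factor $c$ so the telescoping sum yields the denominator $4Nhc + 2$. The crux is to complete the square on the resulting quadratic form in the gradients and verify every multiplier stays nonnegative, and this is exactly where the ceiling on $c$ appears: the multiplier governing the $y$-to-$x_\star$ inequality decreases in $c$ and stays nonnegative precisely while $c \le 1 + O(1/\sqrt{N\log N})$. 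The $\log N$ in that threshold enters through a harmonic-type sum of order $\log N$ sitting in the multiplier, controlled by bounds like $\harmonicLowerBd \le \sum_{k=1}^N \frac1k \le \harmonicUpperBd$.

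For the lower bound I would use the same one-dimensional extremal instances that are tight for Drori--Teboulle: a Huber-type function, quadratic in a neighborhood of its minimizer and affine outside, with its breakpoint retuned so that gradient descent makes steady progress toward $x_\star$ and $x_N$ lands just short of $x_\star$. For $c$ in the admissible range $y = cx_N$ still lies on the correct side of $x_\star$, and a direct evaluation of $f(y)$ gives exactly $\frac{1}{4Nhc+2}$, matching the upper bound; the threshold on $c$ is precisely where $cx_N$ would overshoot into the region where $f$ climbs again, which is why \eqref{Eqn:IntroExtrapBoundGeneral} fails for larger $c$. The step I expect to be the main obstacle is the upper-bound certificate --- guessing the correct nonnegative multipliers (most likely by reading them off numerical PEP solutions, then verifying symbolically) and, above all, pinning down the exact scale $1/\sqrt{N\log N}$ at which nonnegativity fails rather than $1/\sqrt N$ or $1/N$, which hinges on a careful estimate of the harmonic sum governing the sensitive multiplier.
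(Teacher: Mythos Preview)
Your overall architecture matches the paper's: the upper bound comes from a dual certificate in the PEP SDP that generalizes Drori--Teboulle's $c=1$ certificate, and the matching lower bound is a Huber function with breakpoint $\eta=D/(2Nhc+1)$. The harmonic-sum origin of the $\log N$ is also correctly anticipated.

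However, you misidentify \emph{where} the ceiling on $c$ arises, and this would derail the execution. In the paper's certificate the multipliers $\lambda_{i,j}$ stay nonnegative for all $c>\tfrac{N-1}{N}$ (in particular $1-r_N=\tfrac{Nc-N+1}{2Nc-N+1}>0$), so no scalar multiplier is the bottleneck. The binding constraint is instead positive semidefiniteness of the dual slack matrix $Z=\tfrac12 S_c(r,t)$, and specifically of the $(N{+}1)\times(N{+}1)$ block $W_c$. The paper shows $W_c\succ 0$ iff a certain rational function $\psi_N(c)$ is positive, by computing all leading principal minors explicitly; the harmonic sum appears when bounding $\psi_N$ (after the substitution $c=1+s/2N$, the summand becomes $\sim 1/(i{+}1)$), and its largest root satisfies $c_{\mathrm{crit}}>1+\tfrac{1}{4\sqrt{N\log N}}$. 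Your plan to ``complete the square and check a multiplier stays nonnegative'' would not uncover this: the obstruction is an $(N{+}1)$-dimensional determinant condition, not a single linear inequality in $c$.

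A second, smaller misconception: the Huber instance does \emph{not} overshoot at $c_{\mathrm{crit}}$. With $\eta=D/(2Nhc+1)$ one has $x_\sigma=(Nhc+1)\eta>\eta$ for all $c\ge 1$, so the Huber lower bound $LD^2/(4Nhc+2)$ persists past $c_{\mathrm{crit}}$; what fails there is the upper bound, because higher-dimensional worst cases (rank-two $G$, as noted in Section~\ref{SubSect:OneDimensional}) overtake the Huber value. Also, the paper's formulation drops $x_N$ entirely and lets $x_\sigma$ take its place, so there is no ``$y$-to-$x_N$'' inequality in the certificate---the two inequalities touching $x_\sigma$ are $(\star,\sigma)$ with weight $1-r_N$ and $(N{-}1,\sigma)$ with weight $r_N$.
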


Note that gradient descent is not optimal among all first-order methods for smooth convex optimization as accelerated methods provide faster rates of $O(1/N^2)$ \cite{OGM,Nesterov}. As a result, our theory should not be viewed as advancing the state of the art in performance for this general class. Instead, gradient descent provides a structured and fundamental smooth optimization setting where we can provide exact proof that averaging offers no gains and quantify the provable gains simple extrapolation can provide. This simple extrapolation may have practical relevance in settings where memory is the dominant computational constraint (See \cref{Remark:memoryCost}). It can be easily implemented at the end of any first-order method, with no a priori knowledge of the number of steps to be taken. To extend our main results beyond gradient descent, we provide numerical characterizations of the worst-case performance of several common accelerated methods with extrapolation. We find nearly universal improvements (small but nonzero) in the worst-case theoretical guarantees from simple extrapolations. Extensions to projected and proximal gradient methods are also discussed when applicable.

\paragraph{Outline.} First, \cref{Sect:Prelim} introduces our notation and the performance estimation problem framework upon which our results are primarily built. \cref{Sect:Averaging} then proves our negative results regarding the use of averaging in gradient descent (for any of a range of common stepsizes). \cref{Sect:SimpleExtrap} proves our positive result on the benefits of even very simple extrapolation on worst-case performance. Finally, \cref{Sect:Numerics} presents numerical results strongly indicating the degree to which our insights from gradient descent generalize. A few algebraic simplifications are deferred to the associated \texttt{Mathematica}~\cite{Mathematica} notebook available at \href{https://github.com/alanluner/GDAvgExtrap}{\texttt{github.com/alanluner/GDAvgExtrap}}.
    \section{Preliminaries and Performance Estimation Problems} \label{Sect:Prelim}
In this section, we introduce Performance Estimation Problems (PEP) and the specific problems relevant to our analysis. Consider a smooth convex minimization problem of the form
\begin{equation}
    \min_{x \in \mathbb{R}^\dimension} f(x)
\end{equation}
where $f:\mathbb{R}^\dimension \rightarrow \mathbb{R}$ is $L$-smooth (i.e. $\nabla f$ is $L$-Lipschitz) and convex. We assume a minimizer $\xStar$ of $f$ exists and suppose a point $x_0$ is known with $\|x_0 - \xStar\| \leq D$ for some $D \in \mathbb{R}_+$. 
Throughout, we denote the Euclidean inner product by $\langle\cdot,\cdot\rangle$, and all norms are the associated two-norm. Note that a differentiable function $f$ is $L$-smooth and convex if and only if
\begin{equation}\label{eq:smoothness-charcterization}
    f(y) \geq f(x) + \langle \nabla f(x), y-x\rangle + \frac{1}{2L}\|\nabla f(x) - \nabla f(y)\|^2 \qquad \forall x,y\in\mathbb{R}^\dimension \ .
\end{equation}
One particularly useful smooth convex function is the ``Huber'' function, which often occurs as a worst-case problem for smooth optimization, as highlighted in \cite{Interpolation} and many related works \cite{FirstPEP,OGM,Interpolation2,Teboulle_4T_Analytical}. We denote this simple one-dimensional function by
\begin{equation}\label{Eqn:Huber}
    \phi_{L,\eta}(x) := \begin{cases}
        \frac{L}{2}x^2 \quad &\text{if }|x| \leq \eta \\
        L\eta|x|-\frac{L\eta^2}{2} \quad &\text{if } |x| > \eta  \ .
    \end{cases}
\end{equation}

We primarily consider applying gradient descent by iterating as in \eqref{Eqn:GDStep} using a pre-determined stepsize sequence $h=(h_0,\dots,h_{N-1})$.
Rather than simply reporting the terminal iterate $x_N$, we consider reporting an averaged/extrapolated point $\xSpecial$, defined by
\begin{equation}\label{Eqn:xSpecialDef}
    \xSpecial = \sum_{j=0}^{N} \sigma_j x_j
\end{equation}
given weights $\sigma = (\sigma_0, \dots, \sigma_N)$. Note no modification is made to the iterates of gradient descent; the only change is to which point is ultimately reported and, hence, where performance (in the objective gap or gradient norm) is measured.

We broadly refer to this scheme as \textit{general extrapolation}. As a specific case, we define \textit{iterate averaging} as any choice of $\sigma$ such that $\sigma \geq 0$ and $\sum_{j=0}^N \sigma_j = 1$. We will say that an averaging scheme $\sigma$ is \textit{nondegenerate} provided that $\sum_{j=0}^N \sigma_j x_j \neq x_N$ (i.e. $\sigma \neq (0,\dots,0,1)$). Observe that our framing of $\xSpecial$ is equivalent to 
\begin{align}\label{Eqn:xSpecialGradForm}
    \xSpecial &= \sum_{j=0}^N \sigma_j x_j = \sum_{j=0}^N \sigma_j \left(x_0 - \frac{1}{L} \sum_{k=0}^{j-1} h_k \nabla f(x_k) \right) \\
    &= x_0 - \frac{1}{L} \sum_{k=0}^{N-1} \left(h_k \sum_{j=k+1}^N \sigma_j\right) \nabla f(x_k) \ . \nonumber
\end{align}
So a general extrapolation can be denoted by $x_0 + \text{span} \{ \nabla f(x_0), \dots, \nabla f(x_{N-1}) \}$.

\subsection{Performance Estimation Problems with Averaging/Extrapolation}
When evaluating the convergence of an optimization method, there are several different performance measures commonly considered. We will introduce the PEP framework using the objective gap $f(x) - f(\xStar)$ as the measure of algorithm performance. Although this is done to ease our initial development, we will also present results on guaranteeing a small gradient norm, $\|\nabla f (x)\|$.
In particular, our primary focus is to compare the worst-case performance of an averaged or extrapolated point $\xSpecial$ with that of the last iterate $x_N$.

One can frame deriving a convergence guarantee for an algorithm in terms of understanding its performance on a worst-case problem instance. From this perspective, worst-case analysis is an optimization problem: find the problem instance with the maximum final/reported objective gap, gradient norm, etc. We define our worst-case performance $p_{N,L,D}(\sigma,h)$ by
\begin{equation}
    p_{N,L,D}(\sigma,h) := \begin{cases} \max_{x_0,\xStar,f} \quad  & f(\xSpecial) - f(\xStar) \\
                    \text{s.t.} \quad & f(x)\geq f(y)+\langle \nabla f(y),x-y\rangle +\frac{1}{2L}\|\nabla f(x) - \nabla f(y)\|^2 \quad \forall x,y \\
                    &\|x_0-\xStar\| \leq D \\
                    &\nabla f(\xStar) = 0 \\
                    &x_{k+1} = x_k - \frac{h_k}{L}\nabla f(x_k) \quad \quad k=0,\dots,N-2\\
                    &\xSpecial = x_0 - \frac{1}{L} \sum_{k=0}^{N-1} \left(h_k \sum_{j=k+1}^N \sigma_j\right) \nabla f(x_k)
                \end{cases}
\end{equation}
where our first constraint comes from the standard identity for $L$-smooth, convex functions~\eqref{eq:smoothness-charcterization}. We use the alternate form \eqref{Eqn:xSpecialGradForm} of $\xSpecial$ to remove its dependence on $x_N$. Consequently, we can exclude $x_N$ from our problem formulation. Equivalently, one could view $\xSpecial$ as a modified replacement of $x_N$.

Finite dimensional relaxations of this formulation were first considered by Drori and Teboulle~\cite{FirstPEP}. Subsequently and quite surprisingly, the Interpolation Theorem of Taylor et al.~\cite{Interpolation} established an equivalent finite-dimensional problem one could consider. Rather than enforce our constraints for all points in our domain, it is only necessary to enforce them along the points of interest in our algorithm. We denote $f_k = f(x_k)$, $g_k = \nabla f(x_k)$, and $\mathcal{I}=\{\star,0,1,\dots,N-1,\idxSpecial\}$, where we abuse notation to allow $\star$ and $\sigma$ to act as indices ($f_\star = f(x_\star), f_\sigma = f(x_\sigma)$, etc.). Then our problem becomes

\begin{equation}\label{Eqn:Interpolation}
    p_{N,L,D}(\sigma,h) = \begin{cases} \max_{x_0,\xStar,f} \quad & f_\idxSpecial - f_\star \\
                    \text{s.t.} \quad &f_i\geq f_j+\langle g_j,x_i-x_j\rangle +\frac{1}{2L}\| g_i - g_j\|^2 \quad \forall i\neq j \in \mathcal{I} \\
                    &\|x_0-\xStar\| \leq D \\
                    &g_\star = 0 \\
                    &x_{k+1} = x_k - \frac{h_k}{L} g_k \quad \quad k=0,\dots,N-2\\
                    & \xSpecial = x_0 - \frac{1}{L} \sum_{k=0}^{N-1} \left(h_k \sum_{j=k+1}^N \sigma_j\right) g_k \ .
                \end{cases}
\end{equation}
By translation, we can also fix $\xStar=0$ and $f_\star=0$ without loss of generality.

Finally, following the methods in many previous works \cite{Drori2019,Interpolation,Interpolation2}, we slightly relax our discrete problem \eqref{Eqn:Interpolation} to form a solvable SDP. We adopt the notation used in \cite{BnB} and introduce it here. We define
\begin{align*}
    & F = [f_0|f_1|\dots|f_{N-1}|\fSpecial] \in \R^{1 \times(N+1)} \\
    & H = [x_0|g_0|g_1|\dots|g_{N-1}|\gSpecial] \in \R^{\dimension \times(N+2)} \\
    & G = H^T H \in \mathbb{S}_+^{N+2} \ .
\end{align*}
We also define special vectors for selecting particular elements of our matrices using standard basis vectors $e_i$ (the corresponding space for $e_i$ should be clear from context if not specified):
\begin{align*}
    & \mathbf{g_\star} = 0 \in \R^{N+2}, & \mathbf{g_i} = e_{i+2} \in \R^{N+2} \quad i=0,\dots, N-1 \\
    & \mathbf{f_\star} = 0 \in \R^{N+1}, & \mathbf{f_i} = e_{i+1} \in \R^{N+1} \quad i=0,\dots, N-1 \\
    & \mathbf{\xStar} = 0 \in \R^{N+2}, & \mathbf{x_{i+1}} = \mathbf{x_i}-\frac{h_i}{L}\mathbf{g_i} \quad  i=0,\dots,N-2 \\
    & \mathbf{x_0} = e_1 \in \R^{N+2}
\end{align*}
and to account for $\xSpecial$, we define
\begin{align*}
    & \mathbf{g_\sigma} = e_{N+2} \in \R^{N+2} \\
    & \mathbf{x_\sigma} = \mathbf{x}_0 - \frac{1}{L} \sum_{k=0}^{N-1} \left(h_k \sum_{j=k+1}^N \sigma_j\right) \mathbf{g_k} \\
    & \mathbf{f_\sigma} = e_{N+1} \in \R^{N+1} \ .
\end{align*}
Through this construction, we have encoded the gradient steps of the algorithm into our matrices $F$, $G$, and $H$; we have $x_i=H\mathbf{x_i}$, $g_i = H\mathbf{g_i}$, and $f_i = F\mathbf{f_i}$ for all $i \in \mathcal{I}$. Next, using the symmetric outer product $x \odot y = \frac{1}{2}(xy^T + yx^T)$, define
\begin{align*}
    & A_{i,j}(h) = \mathbf{g_j} \odot (\mathbf{x_i} - \mathbf{x_j}) \in \mathbb{S}^{N+2} \\
    & B_{i,j}(h) = (\mathbf{x_i} - \mathbf{x_j})\odot(\mathbf{x_i} - \mathbf{x_j}) \in \mathbb{S}^{N+2}\\
    & C_{i,j} = (\mathbf{g_i} - \mathbf{g_j})\odot (\mathbf{g_i} - \mathbf{g_j}) \in \mathbb{S}^{N+2}\\
    & a_{i,j} = \mathbf{f_j} - \mathbf{f_i} \in \R^{N+1}
\end{align*}
for all $i,j \in \mathcal{I}$, with $i\neq j$.
These matrices satisfy the useful identities
\begin{align*}
        &\langle g_j, x_i-x_j \rangle = \Tr GA_{i,j}(h)\\
        &\|x_i-x_j\|^2=\Tr GB_{i,j}(h)\\
        &\|g_i-g_j\|^2=\Tr GC_{i,j} \ .
\end{align*}
These definitions enable a succinct statement of the performance estimation problem as an SDP with a rank constraint:
\begin{equation}
    p_{N,L,D}(\sigma,h) = \begin{cases} \max_{F,G} \quad & F a_{\star,\idxSpecial} \\
                    \text{s.t. } & Fa_{i,j}+\Tr G A_{i,j}(h) + \frac{1}{2L}\Tr G C_{i,j} \leq 0   \quad \forall i\neq j \in \mathcal{I}\\
                    & G \succeq 0 \\
                    & \Tr G B_{0,\star} \leq D^2  \\
                    & \mathrm{rank} G \leq \dimension \ .
                \end{cases}
\end{equation}
Applying the rank constraint ensures that there is a solution to $G = H^T H$ for $H \in \R^{\dimension \times (N+2)}$. If we remove this rank constraint, we obtain an upper bounding SDP

\begin{equation}\label{Eqn:primal}
    p_{N,L,D}(\sigma,h) \leq \begin{cases} \max_{F,G} \quad & F a_{\star,\idxSpecial} \\
                    \text{s.t. } & Fa_{i,j}+\Tr G A_{i,j}(h) + \frac{1}{2L}\Tr G C_{i,j} \leq 0   \quad \forall i\neq j \in \mathcal{I}\\
                    & G \succeq 0 \\
                    & \Tr G B_{0,\star} \leq D^2 \ . \\
                \end{cases}
\end{equation}
This SDP can be made equivalent by applying an additional assumption that the problem dimension $\dimension$ is at least $N+2$ \cite{Interpolation}. However, our analysis will only rely on upper bounds, so this relaxation is sufficient, and we do not require this assumption on problem dimension. This completes our derivation of the PEP SDP for the averaged/extrapolated performance measure $f(\xSpecial)-f(\xStar)$. Other common performance measures would follow a very similar derivation. Going forward, we simplify our notation to
\begin{equation}
p(\sigma) := p_{N,L,D}(\sigma,h) \ ,
\end{equation}
but note that our solution is a function of each of those now hidden parameters. We especially emphasize the role of $N$ as a known and fixed parameter in the optimization problem.

Finally, we define the dual SDP in preparation for our use of the dual certificate in our later proofs. Introducing dual variables $\lambda_{i,j} \in \R$, $v \in \R$, and $Z \in \mathbb{S}^{N+2}$, we let
\begin{equation} \label{Eqn:dual}
    d(\sigma) := \left\{\begin{array}{lll}
        \min_{v,\lambda,Z} \quad  & v D^2 \\
        \text{s.t. } & \sum_{i\neq j} \lambda_{i,j} a_{i,j} - a_{\star,\idxSpecial} = 0 \\
        & v B_{0,\star} + \sum_{i\neq j} \lambda_{i,j} (A_{i,j}(h) + \frac{1}{2L}C_{i,j}) = Z  \\
        & Z \succeq 0 \\
        & v \geq 0, \lambda_{i,j} \geq 0  \quad \forall i\neq j \in \mathcal{I}
    \end{array}\right.
\end{equation}
with the role of $\sigma$ implicit in the various vector and matrix definitions.
This SDP \eqref{Eqn:primal} and its dual \eqref{Eqn:dual} provide a widely useful framework for analyzing worst-case problem instances~\cite{altschuler2018greed,BnB,FirstPEP,Grimmer_LongStep,Grimmer_Accel,Interpolation}. This dual problem will play a central role in proving our new convergence guarantees as each feasible point to the dual problem constitutes a proof of an upper bound on $p(\sigma)$.

    \section{Averaging is Strictly Worse than the Last Iterate}\label{Sect:Averaging}

In this section, we address our first major question regarding the effect of iterate averaging on gradient descent's convergence guarantees. Our approach to establishing that averaging cannot benefit (and in fact harms) worst-case convergence guarantees works by first observing a common structure in the tight convergence bounds for many common stepsize selections (\cref{SubSec:Tight-Bounds}) and second, showing any stepsize with this worst-case structure cannot benefit from averaging (\cref{SubSect:AvgTheory}).

\subsection{Common Structure Among Tight Last Iterate Convergence Guarantees} \label{SubSec:Tight-Bounds}
The performance estimation framework has enabled the development of exactly tight convergence rates for gradient descent. That is, convergence guarantees that are attained with equality by some problem instance, establishing that no better guarantee is possible. Below, we summarize the known (or conjectured) tight convergence rates for four different families of stepsizes $h_k$.
For each considered stepsize policy on any $L$-smooth convex $f$ with $\|x_0-\xStar\|\leq D$, these tight rates for gradient descent take the form
\begin{equation}\label{Eqn:ObjGapRate}
    f(x_N)-f(\xStar) \leq \frac{LD^2}{4\sum_{i=0}^{N-1} h_i +2}
\end{equation}
and
\begin{equation}\label{Eqn:GradNormRate}
    \|\nabla f(x_N)\| \leq \frac{LD}{\sum_{i=0}^{N-1} h_i +1} \ .
\end{equation}
Before discussing particular stepsize selections, we first note that no stepsize sequence $h_k$ can produce a rate faster than those above. This follows from considering simple Huber functions as prior works~\cite{FirstPEP,OGM,Interpolation,Interpolation2,Teboulle_4T_Analytical} have identified as common worst-case problem instances. While we do not focus on momentum methods in this section, we note that the prevalence of the Huber function appears to be shared for momentum methods such as Nesterov acceleration and OGM, as conjectured in \cite{Interpolation}.
\begin{lemma}\label{Lem:Huber-Tightness}
    Consider gradient descent with stepsizes $h = (h_0, \dots, h_{N-1})$. For any $L,D>0$, there exists a problem instance with $L$-smooth convex $f$ and $\|x_0-\xStar\|\leq D$ such that
    $$ f(x_N)-f(\xStar) = \frac{LD^2}{4\sum_{i=0}^{N-1} h_i +2} \ . $$
    There also exists an $L$-smooth convex $f$ with
    $$ \|\nabla f(x_N)\| = \frac{LD}{\sum_{i=0}^{N-1} h_i +1} \ . $$
\end{lemma}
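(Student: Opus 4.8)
The plan is to exhibit an explicit one-dimensional Huber instance on which gradient descent with the prescribed stepsizes achieves exactly the claimed objective gap and gradient norm. Recall from \eqref{Eqn:Huber} the Huber function $\phi_{L,\eta}$; I would take $f = \phi_{L,\eta}$ with the minimizer at the origin, and choose the initial point $x_0 = D > 0$. The key is to pick the parameter $\eta$ so that gradient descent, started at $x_0 = D$, stays in the \emph{linear} regime $|x| > \eta$ for all of the first $N$ iterates (or, in the borderline case, lands exactly on the kink at the $N$-th step). In the linear regime the gradient is the constant $L\eta$ (with sign equal to $\sign(x_k)$), so the step $x_{k+1} = x_k - \tfrac{h_k}{L}\nabla f(x_k)$ simply subtracts $h_k \eta$. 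Hence, as long as all iterates stay positive and outside $[-\eta,\eta]$, we get $x_N = D - \eta\sum_{i=0}^{N-1} h_i$.

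For the objective-gap claim, I would solve for the $\eta$ that makes $x_N$ symmetric about the kink in exactly the right way, i.e. $x_N = -\eta$ (so that the terminal iterate sits at the left kink), which forces $D - \eta\sum_{i=0}^{N-1}h_i = -\eta$, giving $\eta = D/\bigl(\sum_{i=0}^{N-1}h_i + 1\bigr)$. With this choice $f(x_N) - f(\xStar) = \phi_{L,\eta}(-\eta) - 0 = \tfrac{L}{2}\eta^2 = \tfrac{LD^2}{2(\sum h_i + 1)^2}$; a short algebraic check that this equals $\tfrac{LD^2}{4\sum h_i + 2}$ requires $2(\sum h_i + 1)^2 = \sum h_i + \tfrac12$ — which is \emph{not} an identity, so in fact the right choice of $\eta$ must be different. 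The correct approach is to optimize: parametrize by $\eta$, keep all iterates in the linear regime except possibly allow the final value to land anywhere, write $f(x_N)$ as a function of $\eta$, and choose $\eta$ to maximize it subject to the distance constraint $x_0 = D$; the maximizing $\eta$ is $\eta^\star = D/(2\sum_{i=0}^{N-1}h_i + 1)$, yielding $x_N = \eta^\star$ sitting at the \emph{right} kink, with $f(x_N) = \tfrac{L}{2}(\eta^\star)^2 = \tfrac{LD^2}{2(2\sum h_i+1)^2}$. Hmm, this still does not obviously match $\tfrac{LD^2}{4\sum h_i + 2} = \tfrac{LD^2}{2(2\sum h_i + 1)}$; the discrepancy is a factor of $(2\sum h_i + 1)$, signalling that one should instead \emph{not} keep every iterate in the linear regime — rather, the extremal instance has $x_0$ in the linear regime and the iterates transitioning into the quadratic regime, and one reads off the optimal $\eta$ from the first-order stationarity condition of the resulting expression for $f(x_N)$ in $\eta$. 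I would carry out this one-variable optimization carefully (it is exactly the computation behind the classical Drori--Teboulle lower bound), deferring the bookkeeping to the \texttt{Mathematica} notebook where convenient.

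For the gradient-norm claim the analogous choice is to force $x_N$ to land \emph{exactly} at the kink $x_N = \eta$, i.e. $D - \eta\sum_{i=0}^{N-1} h_i = \eta$, so $\eta = D/(\sum_{i=0}^{N-1} h_i + 1)$; then $\nabla f(x_N) = L x_N = L\eta = \tfrac{LD}{\sum h_i + 1}$, matching the second display exactly, and I must additionally verify $|\nabla f(x_k)| = L\eta$ at all earlier iterates with the iterates decreasing monotonically so none overshoots into the negative linear regime — this holds provided the $h_k$ are such that partial sums never exceed $\sum_{i=0}^{N-1} h_i + 1$, which is automatic, and provided no single step is so long it jumps past the kink on the left, i.e. $x_{k} - h_k\eta \geq -\eta$ for each $k$; this is the one place the hypothesis ``$h_k > 0$'' alone might look insufficient, but since we are only asserting \emph{existence} of a worst-case instance (not for every starting configuration), we are free to rescale: for any fixed stepsize sequence we can always take $\eta$ small enough and restart the analysis in the regime that makes the bound tight, so no constraint on the $h_k$ beyond positivity is needed.

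The main obstacle I anticipate is pinning down \emph{which} regime the extremal Huber instance lives in so that the algebra collapses to the stated closed forms $\tfrac{LD^2}{4\sum h_i + 2}$ and $\tfrac{LD}{\sum h_i + 1}$ — i.e. correctly setting up the one-variable maximization over $\eta$ and identifying its stationary point, and separately checking that the chosen $\eta$ keeps the trajectory in the intended regime for all $N$ steps (a monotonicity/no-overshoot argument). Once the right $\eta$ is identified, everything else is a direct substitution into \eqref{Eqn:Huber} and the gradient-descent recursion \eqref{Eqn:GDStep}.
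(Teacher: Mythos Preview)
Your overall plan---exhibit a one-dimensional Huber instance $f=\phi_{L,\eta}$ with $x_0=D$ and tune $\eta$---is exactly what the paper does, and your gradient-norm argument is correct. (The overshoot worry is unfounded: with $\eta=D/(\sum_{i=0}^{N-1}h_i+1)$ the iterates decrease monotonically from $D$ down to $x_N=\eta$, so every $x_k\geq\eta$ and the linear-regime recursion $x_{k+1}=x_k-h_k\eta$ is valid throughout; no rescaling is needed.)

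The objective-gap argument goes astray because of a single arithmetic slip. You correctly identify the right parameter $\eta^\star=D/(2\sum_{i=0}^{N-1}h_i+1)$, but with this choice the terminal iterate is
\[
x_N \;=\; D-\eta^\star\sum_{i=0}^{N-1}h_i \;=\; \frac{D\bigl(\sum_{i=0}^{N-1}h_i+1\bigr)}{2\sum_{i=0}^{N-1}h_i+1} \;=\; \Bigl(\sum_{i=0}^{N-1}h_i+1\Bigr)\eta^\star,
\]
not $\eta^\star$. In particular $x_N>\eta^\star$, so $x_N$ still lies in the \emph{linear} piece of the Huber function, and one must evaluate $f(x_N)=L\eta^\star x_N-\tfrac{L}{2}(\eta^\star)^2$ rather than the quadratic piece. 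Substituting gives
\[
f(x_N) \;=\; L(\eta^\star)^2\Bigl(\sum_{i=0}^{N-1}h_i+\tfrac12\Bigr) \;=\; \frac{LD^2}{2\bigl(2\sum_{i=0}^{N-1}h_i+1\bigr)} \;=\; \frac{LD^2}{4\sum_{i=0}^{N-1}h_i+2},
\]
exactly as claimed. All iterates, including $x_N$, remain in the linear regime; there is no transition into the quadratic piece, no one-variable optimization to carry out, and nothing to defer to \texttt{Mathematica}. The paper's proof is precisely this direct substitution.
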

\begin{proof}
    In both cases, this problem instance takes the form of $x_0=D$ and $f = \phi_{L,\eta}$ as a Huber function. Note that provided $\eta \leq D/(\sum_{i=0}^{N-1} h_i+1)$, the first $N$ iterates of gradient descent all remain larger than $\eta$, being given by $x_k = D - \eta \sum_{i=0}^{k-1} h_i$ and having constant gradient $\nabla f(x_k) = \eta L$.
    To attain equality in the objective gap, selecting $\eta = D/(2\sum_{i=0}^{N-1} h_i+1)$ has $f(x_N)-f(\xStar) = \frac{LD^2}{4\sum_{i=0}^{N-1} h_i +2}$.     
    To attain equality in the gradient norm, selecting $\eta = D/(\sum_{i=0}^{N-1} h_i+1)$ has $\|\nabla f(x_N)\| = \frac{LD}{\sum_{i=0}^{N-1} h_i +1}$.
\end{proof}

As a result, guarantees of the form~\eqref{Eqn:ObjGapRate} and~\eqref{Eqn:GradNormRate} holding imply more strongly that
\begin{equation} \label{Eqn:ConditionFVal}
    \max_{(x_0,f)\in \mathfrak{F}_{L,D}} f(x_N) - \inf f = \frac{LD^2}{4\sumH{N-1} + 2} \tag{C1}
\end{equation}
and
\begin{equation}\label{Eqn:ConditionGradNorm}
    \max_{(x_0,f)\in \mathfrak{F}_{L,D}} \| \nabla f(x_N) \| = \frac{LD}{\sumH{N-1}+1} \tag{C2}
\end{equation}
hold, respectively. We show below in \cref{Thm:AvgFunc,Thm:AvgGradNorm} that these two conditions imply averaging is strictly worse than returning the last iterate in terms of worst-case guarantees.

\paragraph{Constant Stepsizes $h_k = h \in (0,1]$} Perhaps the simplest setting of gradient descent is the use of a constant stepsize $h_k=h\in (0,1]$. The seminal work of~\cite{FirstPEP} and subsequently~\cite{Teboulle_4T_Analytical} showed such stepsizes have guarantees of the form~\eqref{Eqn:ObjGapRate} and~\eqref{Eqn:GradNormRate}, giving bounds of $LD^2/(4Nh + 2)$ and $LD/(Nh+1)$ on objective gap and gradient norm convergence, respectively. Note tightness of these bounds is easily verified by the examples in \cref{Lem:Huber-Tightness}, and so the conditions~\eqref{Eqn:ConditionFVal} and~\eqref{Eqn:ConditionGradNorm} hold. For extensions of these ideas to monotone operators, see~\cite[Remark 4.10]{LiederThesis}.

\paragraph{Constant Stepsizes $h_k = h \in (1,2)$}
When using constant stepsizes $h_k = h \in (1,2)$, the conjectured tight convergence rates from~\cite[Conjecture 3.1]{FirstPEP} and \cite[Conjecture 3]{Interpolation}, respectively, are that
\begin{equation}\label{Eqn:LargeHBound}
    f(x_N) - f(\xStar) \leq \frac{LD^2}{2} \max\left\{\frac{1}{2Nh+1}, |1-h|^{2N}\right\}
\end{equation}
and
\begin{equation}\label{Eqn:LargeHBoundGrad}
    \|f(x_N) \| \leq LD \max \left\{\frac{1}{Nh+1}, |1-h|^N\right\} \ .
\end{equation}
Provided $N$ is large enough relative to the fixed value of $h$, each first case above dominates, and the (conjectured) convergence guarantees take the form~\eqref{Eqn:ObjGapRate} and~\eqref{Eqn:GradNormRate}. Again, tightness and the conditions~\eqref{Eqn:ConditionFVal} and~\eqref{Eqn:ConditionGradNorm} follow from \cref{Lem:Huber-Tightness}. See \cref{Rem:SilverProven} for progress on this conjecture.

\paragraph{Dynamic Stepsizes $h_k \rightarrow 2$}
Note convergence cannot be guaranteed for gradient descent with $h_k$ constant and greater than or equal to two. A sequence of stepsizes $h_k$ approaching this boundary was proposed and analyzed by~\cite{Teboulle_4T_Analytical}. 
They considered
\begin{equation}\label{Eqn:DynamicStepsizes}
    h_k = \frac{-\sumH{k-1} + \sqrt{\left(\sumH{k-1} \right)^2 + 8\left(\sumH{k-1} + 1\right)}}{2}
\end{equation}
with $h_0 = \sqrt{2}$.
Theorem 4 of~\cite{Teboulle_4T_Analytical} established tight convergence guarantees of the common form~\eqref{Eqn:ObjGapRate} and~\eqref{Eqn:GradNormRate}, establishing conditions~\eqref{Eqn:ConditionFVal} and~\eqref{Eqn:ConditionGradNorm} hold.

\paragraph{Silver Stepsizes $h_k$ (often much greater than two)}
Going beyond the limitation of stepsizes being at most length two, Altschuler and Parrilo~\cite{Silver_Accel} considered the ``silver'' stepsize pattern:
Using the silver ratio, $\rho = 1+\sqrt{2}$, the sequence $h^{(N)}$ is defined recursively for any $N = 2^m -1$ by concatenation $h^{(2N+1)} = (h^{(N)},1+\rho^{m-1}, h^{(N)})$ and with $h^{(1)} = \sqrt{2}$. Note this includes arbitrarily large stepsizes, having $h^{(N)}_{2k} \approx k^{1.2716}$ whenever $k$ is a power of two.
Theorem 1.1 of~\cite{Silver_Accel} showed silver stepsizes have an accelerated convergence rate of $f(x_N)-f(\xStar) = O(LD^2/N^{1.2716})$. Numerically computing the worst-case performance of silver stepsizes via PEP, we find for every $N$ one less than a power of two, the convergence matches~\eqref{Eqn:ObjGapRate} and~\eqref{Eqn:GradNormRate}, see \cref{Tbl:SilverNumerics}. This leads us to make the following conjecture.
\begin{conjecture}\label{Conj:SilverRate}
    Consider a gradient descent algorithm of fixed length $N = 2^m - 1$ and the corresponding silver step sequence $h = h^{(N)}$. Then any $L$-smooth convex $f$ has
    \begin{align*}
        f(x_N) - f(\xStar) &\leq \frac{LD^2}{4 \sumH{N-1} + 2} = \frac{LD^2}{4\rho^{\log_2(N+1)} - 2} \ , \\ 
        \|f(x_N) \| &\leq \frac{LD}{ \sumH{N-1} + 1} = \frac{LD}{\rho^{\log_2(N+1)}} \ . 
    \end{align*}
\end{conjecture}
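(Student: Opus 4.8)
}
The plan is to prove each displayed inequality by exhibiting an explicit feasible point of the dual SDP~\eqref{Eqn:dual} for the last-iterate measure $\sigma=(0,\dots,0,1)$ whose objective equals the claimed value. For the objective-gap bound this means constructing $v=\tfrac{L}{4\sumH{N-1}+2}$ together with multipliers $\lambda\ge0$ and a slack $Z\succeq0$ meeting the two dual equality constraints; weak duality against~\eqref{Eqn:primal} then yields $f(x_N)-f(\xStar)\le\tfrac{LD^2}{4\sumH{N-1}+2}$ for every $L$-smooth convex instance. Since \cref{Lem:Huber-Tightness} already supplies a Huber instance attaining this value, the entire content is the certificate construction. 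Moreover, because the silver schedule $h^{(N)}$ is a palindrome, the gradient-norm bound should follow from the objective-gap bound (or vice versa) via the standard time-reversal correspondence between the function-value and gradient-norm performance measures of gradient descent, so up to that reduction it suffices to build one of the two certificates from scratch. As a sanity check on the target: by \cref{Lem:Huber-Tightness}, $\tfrac{LD^2}{4\sumH{N-1}+2}$ is a lower bound on the worst case of \emph{any} schedule with the same total step length $\sumH{N-1}$, so the conjecture asserts that silver is extremal ``per unit of total step'', consistent with a Huber worst case --- which in turn predicts that the certifying $Z$ should be low rank.

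The construction should exploit the recursion $h^{(2N+1)}=(h^{(N)},\,1+\rho^{m-1},\,h^{(N)})$, $N=2^m-1$, by induction on $m$. The base case $m=0$ (i.e.\ $N=0$, no steps) is the trivial bound $f(x_0)-f(\xStar)\le\tfrac{L}{2}\|x_0-\xStar\|^2\le\tfrac{LD^2}{2}$. For the step from $m$ to $m+1$, decompose a run of $h^{(2N+1)}$ into Phase~I (the first $N$ silver steps, $x_0\to\cdots\to x_N$), Phase~II (the single long step $x_{N+1}=x_N-\tfrac{1+\rho^{m-1}}{L}g_N$), and Phase~III (the last $N$ silver steps, $x_{N+1}\to\cdots\to x_{2N+1}$, which is again the schedule $h^{(N)}$). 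Apply the inductive certificate to Phase~I (initialized at $x_0$ with radius $D$) and to Phase~III (initialized at $x_{N+1}$ with radius $\|x_{N+1}-\xStar\|$), then glue them: use the Phase~II identity to rewrite $\|x_{N+1}-\xStar\|^2$ in terms of $x_N,g_N$, bound it using the primal Gram data attached to the Phase~I certificate, and add a handful of extra interpolation inequalities~\eqref{eq:smoothness-charcterization} among $\xStar$, the Phase~I endpoint, and the Phase~III points. Summing multipliers produces the candidate $(v,\lambda,Z)$ for length $2N+1$; one then checks $v\ge0$, $\lambda\ge0$, the two dual equalities, and $Z\succeq0$, and verifies that the new $v$ equals $L/(4S_{2N+1}+2)$ using $S_{2N+1}=2S_N+1+\rho^{m-1}$. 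This mirrors the recursive ``gluing'' argument of Altschuler and Parrilo~\cite{Silver_Accel}, whose Theorem~1.1 already gives the correct order $\Theta(N^{1.2716})$; the new ingredient is to run that recursion at the level of PEP multipliers while tracking the \emph{exact} denominator $4\sumH{N-1}+2$ rather than an unspecified constant.

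The main obstacle is pinning down the correct strengthened inductive hypothesis. Carrying only the scalar bound on $f(x_N)-f(\xStar)$ cannot close the recursion: the gluing needs simultaneous control of the endpoint quantities $\|x_N-\xStar\|^2$, $\langle g_N,x_N-\xStar\rangle$, and $\|g_N\|^2$; equivalently, Phase~I must hand Phase~III a genuine radius bound, so one has to carry the full ``PEP-exact'' inequality --- the primal-feasible Gram matrix $G$ read off from the dual certificate, not merely its objective value. Determining the closed forms (in $\rho$ and $m$) of these correction terms, of the multiplier pattern $\lambda_{i,j}$, and of the rank-one or rank-two factorization of $Z$ is most realistically done by solving~\eqref{Eqn:primal}--\eqref{Eqn:dual} numerically for $N=1,3,7,15$, reading off the optimal dual solutions, conjecturing the pattern, and then verifying the inductive identities symbolically (e.g.\ in the accompanying \texttt{Mathematica} notebook). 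A secondary technical point is the positive-semidefiniteness of the glued $Z$: as in the earlier long-step analyses, once its expected low-rank structure is exploited this should reduce to the nonnegativity of one explicit small determinant or to a completion of squares, rather than to a genuine eigenvalue estimate.
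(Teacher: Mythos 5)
The statement you are addressing is a \emph{conjecture} in the paper: the authors do not prove it, they support it only with numerical PEP evidence (\cref{Tbl:SilverNumerics}, where the SDP values for $N=1,\dots,127$ match the claimed constants to within solver tolerance). So there is no proof in the paper to compare against, and the relevant question is whether your sketch actually closes the gap the authors left open. It does not. Every mathematically substantive step --- the explicit multiplier pattern $\lambda_{i,j}$, the ``strengthened inductive hypothesis'' that lets Phase~I hand Phase~III a usable radius bound, the closed forms of the correction terms in $\rho$ and $m$, and the positive semidefiniteness of the glued $Z$ --- is deferred to ``solve the SDP numerically, conjecture the pattern, verify symbolically.'' That deferred work is precisely the open problem. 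The existing silver-stepsize certificate of Altschuler and Parrilo establishes only the order $\Theta(\rho^{\log_2 N})$ with a non-tight constant; making the recursive gluing preserve the exact denominator $4\sumH{N-1}+2$ at every level is the hard part, and nothing in your outline shows the recursion can be closed with that exact constant (indeed, if the induction loses even a bounded factor per level, the final constant degrades polynomially in $N$). As written, the proposal is a research plan, not a proof.

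Two further concrete issues. First, your claimed reduction of the gradient-norm bound to the objective-gap bound via time-reversal (H-duality) does not produce the conjectured form: that correspondence exchanges guarantees of type $f(x_N)-f(\xStar)\leq\frac{L}{2\tau}\|x_0-\xStar\|^2$ with guarantees of type $\frac{1}{2L}\|\nabla f(x_N)\|^2\leq\frac{1}{2\tau}(f(x_0)-f(\xStar))$, and composing the latter with $f(x_0)-f(\xStar)\leq\frac{L}{2}D^2$ yields only $\|\nabla f(x_N)\|\leq LD/\sqrt{4\sumH{N-1}+2}$, which is much weaker than the conjectured $LD/(\sumH{N-1}+1)$. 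The gradient-norm certificate therefore needs its own construction; the paper itself remarks (in \cref{SubSect:OptimalExtrapolation}) that even for constant stepsizes the authors could not identify the gradient-norm dual certificates. Second, your base case is off: the induction should start at $m=1$, $N=1$, $h^{(1)}=\sqrt{2}$ (a one-step certificate with $v=L/(4\sqrt{2}+2)$), since the recursion $h^{(2N+1)}=(h^{(N)},1+\rho^{m-1},h^{(N)})$ is anchored there, not at an empty schedule.
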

\noindent In the statement above, we use the fact that $\sumH{N-1} = \rho^{\log_2(N+1)} - 1$. If true, tightness of these bounds is immediate from \cref{Lem:Huber-Tightness}, giving conditions~\eqref{Eqn:ConditionFVal} and~\eqref{Eqn:ConditionGradNorm}.
See \cref{Rem:SilverProven} for progress on this conjecture.

\begin{table}
    \centering\footnotesize
    \begin{tabular}{|c|c|c|c|c|c|c|c|}
    \hline
         & $N=1$ & $N=3$ & $N=7$ & $N=15$ & $N=31$ & $N=63$ & $N=127$\\
         \hline
         Silver Obj.~Gap PEP & 0.13060 & 0.04692 & 0.01842 & 0.00747 & 0.00307 & 0.00127 & 0.00052\\
         Difference from~\eqref{Eqn:ConditionFVal} &-8.567e-10 & 1.581e-8 & 5.148e-11 & -2.942e-9 & -5.057e-12 & -3.232e-8 & -1.660e-13\\
         \hline
         Silver Grad.~Norm PEP & 0.41421 & 0.17157 & 0.07107 & 0.02944 & 0.01219 & 0.00505 & 0.00209 \\
         Difference from~\eqref{Eqn:ConditionGradNorm} & -3.669e-14 & -1.976e-13 & -6.178e-12 & -5.630e-9 & -1.204e-8 & -3.685e-10 & -5.238e-9 \\    
         \hline
    \end{tabular}
    \caption{Numerical results from \texttt{Mosek} with feasibility tolerances set as $10^{-12}$ solving the PEP SDP for the worst-case objective gap and gradient norm of $x_N$ after $N$ steps of the silver stepsize sequence. Differences from the value in \cref{Conj:SilverRate} are presented, which remain relatively small.}
    \label{Tbl:SilverNumerics}
\end{table}

\subsection{Suboptimal Convergence from Averaging}\label{SubSect:AvgTheory}
The above tight characterizations (down to the constants) of the worst-case performance of gradient descent's final iterate provide a direct quantity against which to compare the performance of a proposed averaging scheme. We find that the conditions~\eqref{Eqn:ConditionFVal} and~\eqref{Eqn:ConditionGradNorm} imply every nondegenerate iterate averaging $\sigma$ provides a strictly worse final objective value. This is formalized for objective gap convergence in \cref{Thm:AvgFunc} and gradient norm convergence in \cref{Thm:AvgGradNorm}. Similar to \cref{Lem:Huber-Tightness}, the proof of these results is based on considering related Huber functions.

As an immediate consequence of these theorems, averaging cannot benefit (and strictly worsens) the convergence guarantees for gradient descent with $h_k$ constant less than one or as the dynamic sequence in~\eqref{Eqn:DynamicStepsizes}. Further, if $N$ is sufficiently large and the numerically supported conjecture of~\cite[Conjecture 3.1]{FirstPEP} holds (See \cref{Rem:SilverProven}), constant stepsizes between one and two cannot benefit from averaging. Similarly, if $N$ is one less than a power of two and the numerically supported \cref{Conj:SilverRate} holds, the accelerated rate following from silver stepsizes cannot benefit from averaging.
As an aside, numerical PEP evaluations indicate the conditions~\eqref{Eqn:ConditionFVal} and~\eqref{Eqn:ConditionGradNorm} do not appear to hold for the straightforward sequences studied by~\cite{Grimmer_LongStep,Grimmer_Accel}, indicating they may benefit from averaging.

\begin{theorem}\label{Thm:AvgFunc}
    Consider gradient descent with stepsizes $h=(h_0, \dots, h_{N-1}) > 0$. For any $L,D > 0$, if the worst final iterate objective gap is characterized by~\eqref{Eqn:ConditionFVal}, then the optimal averaging is to return the final iterate. That is,
    \begin{equation} \label{Eqn:minMaxFVal}
        \min_{\sigma\in\Delta_{N+1}} \max_{(x_0,f)\in \mathfrak{F}_{L,D}} f\left(\sum^{N}_{j=0} \sigma_j x_j\right) - \inf f = \frac{LD^2}{4\sumH{N-1}+2}
    \end{equation}
    with $\sigma = (0,\dots,0,1)$ as the unique minimizer.
\end{theorem}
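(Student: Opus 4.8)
The plan is to establish two inequalities: that every averaging scheme $\sigma \in \Delta_{N+1}$ achieves worst-case objective gap at least $\frac{LD^2}{4\sumH{N-1}+2}$, and that the degenerate choice $\sigma = (0,\dots,0,1)$ achieves exactly this value, with strict inequality for all nondegenerate $\sigma$. The second claim is immediate from condition~\eqref{Eqn:ConditionFVal}, since for $\sigma = (0,\dots,0,1)$ the reported point $\xSpecial$ is precisely $x_N$. So the heart of the argument is the lower bound together with its strictness.

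For the lower bound, following the pattern of \cref{Lem:Huber-Tightness}, I would fix a Huber function $f = \phi_{L,\eta}$ and initial point $x_0 = D$ and track where the averaged point lands. Using the gradient form~\eqref{Eqn:xSpecialGradForm}, when all iterates $x_0,\dots,x_{N-1}$ remain in the linear regime (i.e.\ exceed $\eta$ in absolute value), each gradient is $\nabla f(x_k) = \eta L$, so $\xSpecial = D - \eta \sum_{k=0}^{N-1} h_k \sum_{j=k+1}^N \sigma_j = D - \eta \sum_{j=0}^{N} \sigma_j \sum_{k=0}^{j-1} h_k$. Writing $S_\sigma := \sum_{j=0}^{N}\sigma_j \sum_{k=0}^{j-1} h_k$ for this ``effective stepsize sum'' (with the convention that the $j=0$ term is zero), and noting that $\sigma$ being a probability vector forces $0 \le S_\sigma \le \sumH{N-1}$, the reported point is $\xSpecial = D - \eta S_\sigma$. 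I would then choose $\eta$ to make this Huber instance as bad as possible: picking $\eta$ so that $\xSpecial$ sits right at the kink region or by directly optimizing $f(\xSpecial) - f(\xStar)$ over admissible $\eta$, analogous to the choice $\eta = D/(2\sum h_i + 1)$ in \cref{Lem:Huber-Tightness}. The key point is that because $S_\sigma \le \sumH{N-1}$, the resulting worst-case value is at least $\frac{LD^2}{4 S_\sigma + 2} \ge \frac{LD^2}{4\sumH{N-1}+2}$, with equality in the second step only when $S_\sigma = \sumH{N-1}$.

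Then I need the strictness: $S_\sigma = \sumH{N-1}$ forces $\sigma = (0,\dots,0,1)$. Since $\sum_{k=0}^{j-1} h_k$ is strictly increasing in $j$ (all $h_k > 0$) and maximized at $j = N$, a convex combination $S_\sigma = \sum_j \sigma_j \sum_{k=0}^{j-1}h_k$ equals the maximum $\sumH{N-1}$ only if $\sigma$ places all its mass on $j = N$. Hence any nondegenerate $\sigma$ has $S_\sigma < \sumH{N-1}$ strictly, and the Huber construction above gives a worst-case objective gap strictly exceeding $\frac{LD^2}{4\sumH{N-1}+2}$. Combined with the fact that $\sigma=(0,\dots,0,1)$ attains the bound via~\eqref{Eqn:ConditionFVal}, this proves~\eqref{Eqn:minMaxFVal} and uniqueness of the minimizer.

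The main obstacle I anticipate is the bookkeeping around the Huber construction: I must verify that for the chosen $\eta$, the first $N$ iterates genuinely stay in the linear regime so that the clean formula $\xSpecial = D - \eta S_\sigma$ is valid, \emph{and} then correctly evaluate $f(\xSpecial) - \inf f$ (which could land in either the quadratic or the linear piece depending on $\eta$ and $S_\sigma$), optimizing over $\eta$ to extract the sharp constant. A secondary subtlety is that $\xSpecial$ here is an averaged point of the true trajectory, not an extra iterate, so I should make sure the instance is genuinely feasible for the performance estimation problem with this $\sigma$; but since I am exhibiting an explicit function rather than solving the SDP, feasibility is automatic once the regime check passes. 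I expect the calculation to mirror \cref{Lem:Huber-Tightness} closely, with $\sumH{N-1}$ replaced by $S_\sigma$ throughout.
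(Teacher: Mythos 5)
Your proposal follows essentially the same route as the paper: take $x_0=D$ and a Huber function $\phi_{L,\eta}$ whose iterates all stay in the linear regime, observe that the averaged point $\xSpecial = D - \eta S_\sigma$ lies strictly above $x_N = D - \eta\sumH{N-1}$ whenever $S_\sigma < \sumH{N-1}$ (which, as you correctly argue, characterizes nondegeneracy since the partial sums $\sum_{k=0}^{j-1}h_k$ are strictly increasing), and conclude that the objective gap at $\xSpecial$ strictly exceeds that at $x_N$. The one place you overreach is the intermediate claim that optimizing $\eta$ yields a worst case of at least $\frac{LD^2}{4S_\sigma+2}$: the optimizing choice $\eta = D/(2S_\sigma+1)$ is only admissible (i.e., keeps $x_0,\dots,x_{N-1}$ in the linear regime) when $2S_\sigma \geq \sum_{i=0}^{N-2}h_i$, which fails for $\sigma$ concentrating mass on early iterates (e.g., $\sigma$ near $(1,0,\dots,0)$ has $S_\sigma \approx 0$). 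This is exactly the ``bookkeeping'' obstacle you flagged, and it is real. The paper sidesteps it entirely by \emph{not} re-optimizing $\eta$ per $\sigma$: it fixes $\eta = D/(2\sumH{N-1}+1)$, the same value as in \cref{Lem:Huber-Tightness}, which is always admissible; then $f$ is linear on the convex hull of the iterates, so $f(\xSpecial) = \sum_j \sigma_j f(x_j) > f(x_N) = \frac{LD^2}{4\sumH{N-1}+2}$ for any nondegenerate $\sigma$. Since the theorem only needs strict improvement over $\frac{LD^2}{4\sumH{N-1}+2}$, not the sharp per-$\sigma$ constant, your argument goes through once you replace the $\sigma$-dependent $\eta$ with this fixed admissible one.
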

\begin{proof}
    To prove this result, it suffices to show that for any nondegenerate $\sigma$, there exists an $L$-smooth convex function $f$ and initialization $\|x_0-\xStar\|\leq D$ such that
    \begin{equation*}
        f(\xSpecial) - f(\xStar) > \frac{LD^2}{4\sumH{N-1} + 2} \ .
    \end{equation*}
    We consider the same Huber function establishing tightness of the last iterate convergence in \cref{Lem:Huber-Tightness}. Let $x_0=D$ and $f=\phi_{L,\eta}$ with $\eta=D/(2\sum_{i=0}^{N-1}h_i +1)$.
    Noting $\eta < 
    D/(\sum_{i=0}^{N-1}h_i +1)$, each $k\leq N$ has $x_k = D-\eta \sumH{k-1} > \eta$ and $\nabla f(x_k) = \eta L$. Observe that $f$ is affine over the convex hull of the iterates (since it is affine on $x\geq \eta$) with
    \begin{align*}
        f(x_0) &= \frac{LD^2}{2\sum_{i=0}^{N-1} h_i +1} - \frac{LD^2}{2(2\sum_{i=0}^{N-1} h_i +1)^2} \ , \\
        f(x_N) &= \frac{LD^2}{2(2\sum_{i=0}^{N-1} h_i +1)} < f(x_0) \ .
    \end{align*}
    Hence $f(x_N) < f(x_k)$ for all $k<N$. Then for any nondegenerate $\sigma$, this affine property ensures
    \begin{align*}
        f(\xSpecial) - f(\xStar) = f(\xSpecial) & = \sum_{j=0}^N \sigma_j f(x_j)
         > f(x_N) 
         = \frac{LD^2}{4\sumH{N-1}+2}
    \end{align*}
    Therefore, the degenerate averaging $\sigma = (0,\dots,0,1)$ uniquely minimizes the worst-case objective gap $\max_{(x_0,f)} f(\xSpecial) - \inf f$.
\end{proof}

\begin{theorem}\label{Thm:AvgGradNorm}
    Consider gradient descent with stepsizes $h=(h_0, \dots, h_{N-1}) > 0$. For any $L,D > 0$, if the worst final iterate gradient norm is characterized by~\eqref{Eqn:ConditionGradNorm}, then the optimal averaging is to return the final iterate. That is,
    \begin{equation}\label{Eqn:minMaxGradNorm}
        \min_{\sigma\in\Delta_{N+1}} \max_{(x_0,f)\in \mathfrak{F}_{L,D}} \left\|\nabla f\left(\sum^{N}_{j=0} \sigma_j x_j\right) \right\| = \frac{LD}{\sumH{N-1}+1}
    \end{equation}
    with $\sigma = (0,\dots,0,1)$ as the unique minimizer.
\end{theorem}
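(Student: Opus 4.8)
\textbf{Proof proposal for \cref{Thm:AvgGradNorm}.}
The plan is to mirror the proof of \cref{Thm:AvgFunc}: since condition~\eqref{Eqn:ConditionGradNorm} already gives that $\sigma=(0,\dots,0,1)$ achieves worst-case gradient norm exactly $\frac{LD}{\sumH{N-1}+1}$, it suffices to show that every nondegenerate averaging scheme $\sigma$ admits a single instance $(x_0,f)\in\mathfrak{F}_{L,D}$ with $\|\nabla f(\xSpecial)\| > \frac{LD}{\sumH{N-1}+1}$; this forces $(0,\dots,0,1)$ to be the unique minimizer and pins the optimal value at $\frac{LD}{\sumH{N-1}+1}$.

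The one real wrinkle, compared with the objective-gap case, is that the ``flat'' Huber instance of \cref{Lem:Huber-Tightness} — where every iterate $x_0,\dots,x_N$ lies in the linear region and shares the common gradient $L\eta$ — does not separate averaging from the last iterate for the gradient-norm measure, since any convex combination of the iterates again lies in that region with the same gradient. To break this degeneracy I would still take $x_0=D$ and $f=\phi_{L,\eta}$, but now choose $\eta$ \emph{slightly larger} than the threshold $D/(\sumH{N-1}+1)$. For such $\eta$, an easy induction gives $x_k = D-\eta\sumH{k-1}$ for $k\le N$, with $x_0>x_1>\cdots>x_{N-1}\ge\eta$ (so these iterates remain in the linear region, the recursion is valid, and $\nabla f$ equals $L\eta$ there), while $x_N = D-\eta\sumH{N-1}$ now falls strictly below the kink $\eta$ into the quadratic region. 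Everything on $[\eta,\infty)$ then has gradient norm $L\eta > \frac{LD}{\sumH{N-1}+1}$, and every iterate except $x_N$ sits there.

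It remains to check that for suitably small such $\eta$ the averaged point $\xSpecial=\sum_{j=0}^N\sigma_j x_j$ also stays in $[\eta,\infty)$. Since the iterates are decreasing, $\xSpecial \ge \sigma_N x_N + (1-\sigma_N)x_{N-1} = x_N + (1-\sigma_N)\eta h_{N-1}$; substituting $x_N = D - \eta\sumH{N-1}$, this lower bound exceeds $\eta$ precisely when $\eta < D/\bigl(1+\sumH{N-2}+\sigma_N h_{N-1}\bigr)$. Because $\sigma$ is nondegenerate, $\sigma_N<1$, hence $1+\sumH{N-2}+\sigma_N h_{N-1} < 1+\sumH{N-1}$, so the admissible window $\bigl(D/(1+\sumH{N-1}),\ D/(1+\sumH{N-2}+\sigma_N h_{N-1})\bigr)$ is nonempty; any $\eta$ in it also automatically keeps $x_{N-1}\ge\eta$. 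For such $\eta$ we get $\xSpecial>\eta$, hence $\|\nabla f(\xSpecial)\| = L\eta > \frac{LD}{\sumH{N-1}+1}$, as desired.

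The only delicate point — and where a careless version would fail — is exactly this bookkeeping on $\eta$: one must simultaneously keep $x_0,\dots,x_{N-1}$ in the linear region (so the iterate formula holds and the gradient there is $L\eta$), push $\xSpecial$ above the kink, and yet keep $\eta$ strictly above $D/(\sumH{N-1}+1)$ so that $L\eta$ genuinely beats the last-iterate guarantee. The nonemptiness of the window is precisely where $\sigma_N<1$ enters — intuitively, the last step contributes $h_{N-1}$ in full to $x_N$ but only an $\sigma_N$-fraction of it to $\xSpecial$. Everything else (convexity and $L$-smoothness of $\phi_{L,\eta}$, $\|x_0-\xStar\|=D$, and the induction for the iterates, including the empty-sum convention $\sumH{N-2}=0$ when $N=1$) is routine and parallels \cref{Lem:Huber-Tightness} and \cref{Thm:AvgFunc}.
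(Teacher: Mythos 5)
Your proposal is correct and follows essentially the same route as the paper: both perturb the tight Huber instance of \cref{Lem:Huber-Tightness} by taking $\eta$ slightly above $D/(\sumH{N-1}+1)$ and then verify that the averaged point stays where the gradient norm is $L\eta > LD/(\sumH{N-1}+1)$. The only difference is bookkeeping — the paper bounds the perturbation $\delta$ via the quantity $\sumSigH < \sumH{N-1}$, while you use the convexity bound $\xSpecial \ge (1-\sigma_N)x_{N-1}+\sigma_N x_N$ and the single condition $\sigma_N<1$ to exhibit an explicit admissible window for $\eta$; both are valid.
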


\begin{proof}
    Fix some nondegenerate $\sigma$ and choose $\delta>0$ such that
    \begin{equation}\label{Eqn:DeltaCondition}
        \delta < \min \left\{\frac{D h_{N-1}}{(\sumH{N-1}+1)(\sumH{N-2}+1)}, \ D\left(\frac{1}{\sumSigH + 1} - \frac{1}{\sumH{N-1} + 1}\right) \right\} \ .
    \end{equation}
    Note that since $\sigma$ is nondegenerate, $\sumSigH < \sumH{N-1}$, so the second term in \eqref{Eqn:DeltaCondition} is strictly positive.

    Consider $x_0=D$ and the Huber function $f= \phi_{L,\eta}$ with
    \begin{equation*}
        \eta = \frac{D}{\sum_{i=0}^{N-1}h_i +1} +\delta \ .
    \end{equation*}
    From the first term in \eqref{Eqn:DeltaCondition}, we have $D - \eta (\sumH{N-2} + 1) > 0$ and therefore, $D-\eta\sumH{N-2} > \eta$. As a result, the first $N$ iterates of gradient descent are given by $x_k = D - \eta\sumH{k-1}$. Thus, any averaging $\sigma$ produces an output point
    \begin{align*}
        \xSpecial = \sum_{j=0}^{N} \sigma_j x_j & = \sum_{j=0}^N \sigma_j (D-\eta\sum_{i=0}^{j-1}h_i) = D-\eta\sumSigH \ .
    \end{align*}
    Then, applying the second term of \eqref{Eqn:DeltaCondition} with $\eta = \frac{D}{\sum_{i=0}^{N-1}h_i +1} +\delta$ yields
    \begin{align*}
        \xSpecial & = D- \left(\frac{D}{\sum_{i=0}^{N-1}h_i +1} +\delta\right)\sumSigH > \frac{D}{\sumH{N-1} +1} + \delta = \eta \ . 
    \end{align*}
    By construction, at $\xSpecial > \eta$ we have $\nabla f(\xSpecial) = \eta > \frac{D}{\sumH{N-1} +1}$. Since this bound holds for any nondegenerate averaging, we conclude the degenerate averaging $\sigma = (0,\dots,0,1)$ uniquely minimizes the worst-case gradient norm $\max_{(x_0,f)}\|\nabla f(\xSpecial)\|$.
\end{proof}

\subsubsection{Discussion of Conditions (\ref{Eqn:ConditionFVal}) and (\ref{Eqn:ConditionGradNorm})}
    We emphasize that \eqref{Eqn:ConditionFVal} and \eqref{Eqn:ConditionGradNorm} are essential to our proofs above; we rely on the fact that the Huber function is the worst-case problem instance for the terminal iterate, which is implied by \eqref{Eqn:ConditionFVal}/\eqref{Eqn:ConditionGradNorm}. Given any stepsize sequence $h$, we can numerically check if \eqref{Eqn:ConditionFVal}/\eqref{Eqn:ConditionGradNorm} hold by solving \eqref{Eqn:primal} and if averaging can improve the worst-case performance (see \eqref{Eqn:OptimizeSigma}).
    In \cref{Fig:C1C2Data}, we plot for $N=2$ and $N=3$ the stepsizes $h$ for which \eqref{Eqn:ConditionFVal} and \eqref{Eqn:ConditionGradNorm} hold numerically. Comparing with the values of $h$ for which averaging is not beneficial (which can also be numerically confirmed), we found that these conditions exactly align. That is, numerical results suggest that the converses of \cref{Thm:AvgFunc} and \cref{Thm:AvgGradNorm} are true, i.e., \eqref{Eqn:ConditionFVal}/\eqref{Eqn:ConditionGradNorm} hold if and only if averaging is not beneficial. We state this formally in the following conjecture.

    \begin{conjecture}\label{Conj:C1C2Converse}
        Consider gradient descent with stepsizes $h=(h_0, \dots, h_{N-1}) > 0$. For any $L,D > 0$, if the worst-case final iterate objective gap is \textbf{not} characterized by~\eqref{Eqn:ConditionFVal}, then there exists nondegenerate $\sigma$ such that
    \begin{equation*}
        \max_{(x_0,f)\in \mathfrak{F}_{L,D}} f\left(\sum^{N}_{j=0} \sigma_j x_j\right) - \inf f < \frac{LD^2}{4\sumH{N-1}+2} \ .
    \end{equation*}
    Similarly, if the worst-case final iterate gradient norm is \textbf{not} characterized by~\eqref{Eqn:ConditionGradNorm}, then there exists nondegenerate $\sigma$ such that 
    \begin{equation*}
         \max_{(x_0,f)\in \mathfrak{F}_{L,D}} \left\|\nabla f\left(\sum^{N}_{j=0} \sigma_j x_j\right) \right\| < \frac{LD}{\sumH{N-1}+1} \ .
    \end{equation*}
    \end{conjecture}

\begin{figure}
    \centering
    \includegraphics[width=1.0\textwidth]{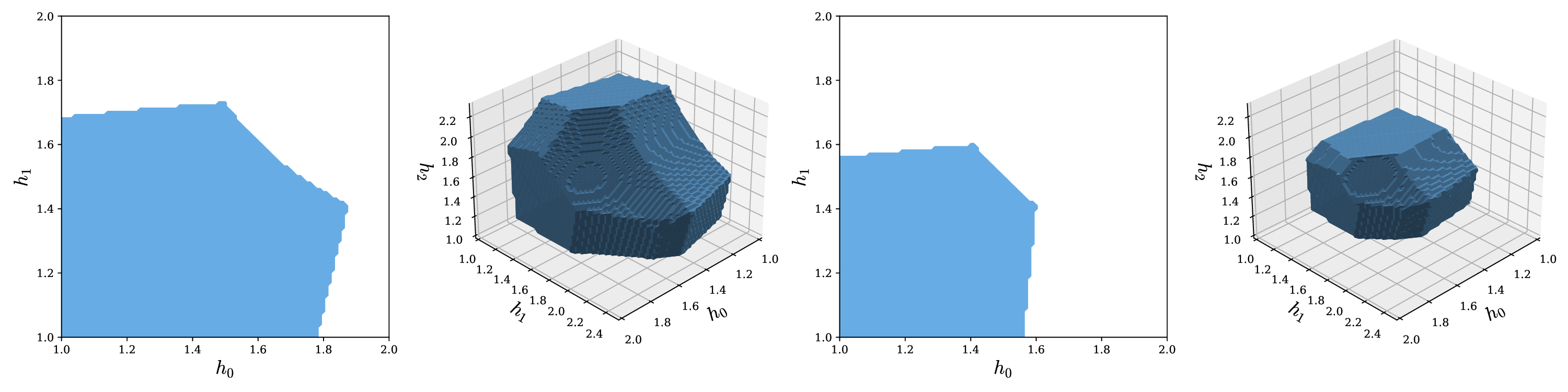}
    \caption{Values of $h$ for which \eqref{Eqn:ConditionFVal} holds (left) and \eqref{Eqn:ConditionGradNorm} holds (right) for $N=2,3$. Numerically we find that this region fully aligns with the stepsizes for which averaging is not beneficial (See \cref{Conj:C1C2Converse}).}
    \label{Fig:C1C2Data}
\end{figure}

\subsection{Extension to Projected/Proximal Gradient Descent}
    Reasoning virtually identical to the above can be applied to the proximal gradient method. We focus on the constant stepsize case, but the argument can be similarly extended to appropriate non-constant sequences. Consider minimizing an additive composite function $\psi = f +r $ where $f,r$ are both convex. Denote the proximal mapping by 
    \begin{equation*}
        \prox_{t r}(x) := \argmin \left\{r(u) + \frac{1}{2t}\|u-x\|^2: u \in \R^\dimension \right\}
    \end{equation*}
    and then iterate according to $x_{k+1}=\mathrm{prox}_{(h/L) r} (x_k-\frac{h}{L}\nabla f(x_k))$. We note that projected gradient descent is a special case where we take $r$ as the indicator function of a closed, convex set. In \cite[Theorem 7]{Teboulle_4T_Analytical} Teboulle and Vaisbourd proved for any $L$-smooth $f$, the proximal gradient method with constant stepsize $h \in (0,1]$ satisfies
    \begin{equation*}
        \psi(x_N) - \psi(\xStar) \leq \frac{LD^2}{4Nh}
    \end{equation*}
    and
    \begin{equation*}
       \|\psi_{h/L}(x_{N-1}) \| \leq \frac{LD}{Nh}
    \end{equation*}
    where $$\psi_{h/L}(x_{N-1}) = \left((x_{N-1} - \frac{h}{L}\nabla f(x_{N-1})) - (x_{N-1}^+ -\frac{h}{L} \nabla f(x_{N-1}^+))\right)L/h$$ and $x^+ = \mathrm{prox}_{(h/L)r} \left(x - \frac{h}{L}\nabla f(x) \right)$. Note that $\psi_{h/L}(x_{N-1}) \in \partial \psi(x_N)$, so these results are clear analogs of those in \cref{Thm:AvgFunc,Thm:AvgGradNorm}. Moreover, the bounds above are shown to be tight by the simple example of $f(x) = L \eta x$ and $r$ as the indicator function over $\R_+$, choosing $\eta = \frac{D}{2Nh}$ and $\eta = \frac{D}{Nh}$, respectively. The earliest occurrences of these tight lower bound examples to our knowledge are~\cite[Appendix 2.8]{Drori2014} and~\cite[Section 4]{Interpolation}.
    Consequently, we can apply similar arguments from \cref{Thm:AvgFunc} and \cref{Thm:AvgGradNorm}, to obtain the result below. As a note, although PEP is not used for proximal settings here, such natural extensions exist~\cite{Interpolation2} and have seen much use~\cite{Barre2023,pepit2022,Gu2020,Kim2021}.

    \begin{proposition}
    Consider the proximal gradient method with constant stepsize $h \in (0,1]$ and any $L,D>0$. For any nondegenerate averaging $\sigma$, the worst-case performance is strictly worse than that of the terminal iterate. That is,
        \begin{align*}
            \max_{(x_0,f)\in \mathfrak{F}_{L,D}} \psi(\xSpecial) - \psi(\xStar) > \frac{LD^2}{4Nh}
        \end{align*}
        and
        \begin{align*}
            \max_{(x_0,f)\in \mathfrak{F}_{L,D}} \|\xi\| > \frac{LD}{Nh}
        \end{align*}
        for any $\xi \in \partial \psi(\xSpecial)$.
    \end{proposition}

    \begin{proof}
        Let $x_0 = D$, $f(x) = L \eta x$, and let $r$ be the indicator function over $\R_+$.
        To prove our first claim, we let $\eta = \frac{LD}{2Nh}$. Our proximal gradient iterates are then given by $x_{k+1} = x_k - \frac{h}{L} \eta$. We observe that $\psi$ is linear over the convex hull of our iterates $x_0, \dots, x_N$. For any nondegenerate $\sigma$, we then have
        \begin{equation*}
            \psi(\xSpecial) - \psi(\xStar) = \psi(\xSpecial) = \sum_{j=0}^N \sigma_j f(x_j) > f(x_N) = f(D - hN\eta) = \frac{LD^2}{4Nh} \ .
        \end{equation*}

        To prove our second claim, fix some nondegenerate $\sigma$ and choose $\delta>0$ such that 
        \begin{equation} \label{Eqn:DeltaProxCondition}
            \delta <  \min \left\{\frac{D}{N(N-1)h}, \ D \left(\frac{1}{h\sum_{j=0}^N j \sigma_j} - \frac{1}{Nh}\right) \right\}
        \end{equation}
        Let $\eta = \frac{D}{Nh} + \delta$. By the first term of \eqref{Eqn:DeltaProxCondition}, we have $D - h(N-1)\eta > 0$ and therefore $x_k = D - \eta kh$ for $k = 1,\dots, N$.
        We similarly apply the second term of \eqref{Eqn:DeltaProxCondition} to get
        \begin{align*}
            \xSpecial & = \sum_{j=0}^N \sigma_j (D - \eta jh) =  D- \left(\frac{D}{Nh} +\delta\right) h \sum_{j=0}^N j \sigma_j > 0 \ . 
        \end{align*}
        Finally, given that $\xSpecial > 0$, we compute $\partial \psi(x_\sigma) = \{ L\eta \}$, so our claim holds. 
    \end{proof}

    \section{Extrapolations are Strictly Better than the Last Iterate}\label{Sect:SimpleExtrap}
Given that averaging gradient descent's iterates cannot improve its convergence bounds, we now remove the restriction that $\sigma$ describes a convex combination, allowing it to be freely chosen from $\R^{N+1}$. As discussed in \cref{Sect:Prelim}, this allows $\xSpecial$ to be anything in $x_0 + \text{span} \{ \nabla f(x_0), \dots, \nabla f(x_{N-1}) \}$.

An initial problem to investigate in this larger parameter space is to determine the optimal choice of $\sigma$ for a fixed number of iterations $N$ and fixed step sequence $h$. 
Given $L,D>0$ and stepsizes $h$, this amounts to solving the following nonconvex minimization problem
\begin{equation} \label{Eqn:OptimizeSigma}
    \min_{\sigma\in\mathbb{R}^{N+1}} p(\sigma) \ .
\end{equation}
Note that expanding the definition of $p(\sigma)$ gives a nonconvex minimax problem. Das Gupta et al.~\cite{BnB} introduced a spatial branch-and-bound approach tailored to numerically globally solving such problems. We apply their method to the new setting of optimizing extrapolation weights $\sigma$.
Fixing $N=10$ and constant stepsize $h=1$, \cref{Fig:SigmaGlobal} shows the optimal $\sigma$ for minimizing the objective gap $f(\xSpecial)-f(\xStar)$ and for minimizing the gradient norm $\|\nabla f(\xSpecial)\|$. We observe that the components of $\sigma$ are dominated by the weights of the last two components. It is worth noting, however, that approximating these final two weights and setting the remaining weights to zero significantly worsens the performance. For example, considering the objective gap above, one can approximate the optimal $\sigma$ by $(0,\dots, 0, -2.3493, 3.3493)$. Solving \eqref{Eqn:primal} for this approximated $\sigma$ gives a performance guarantee of $0.02473$, notably worse than the optimal value of $0.02000$, and worse than the guarantee without extrapolation of $0.02381$. Thus, the subtle structure (highlighted in \cref{Fig:SigmaGlobal}) for components $k < N-1$ appears to be integral to the optimal extrapolation scheme's success. The first four columns of \cref{tab:extrapolation-performance} show these best possible improvements to the worst-case performance as $N$ varies.

\begin{figure}[tbp]
\centering
\includegraphics[width=0.9\textwidth]{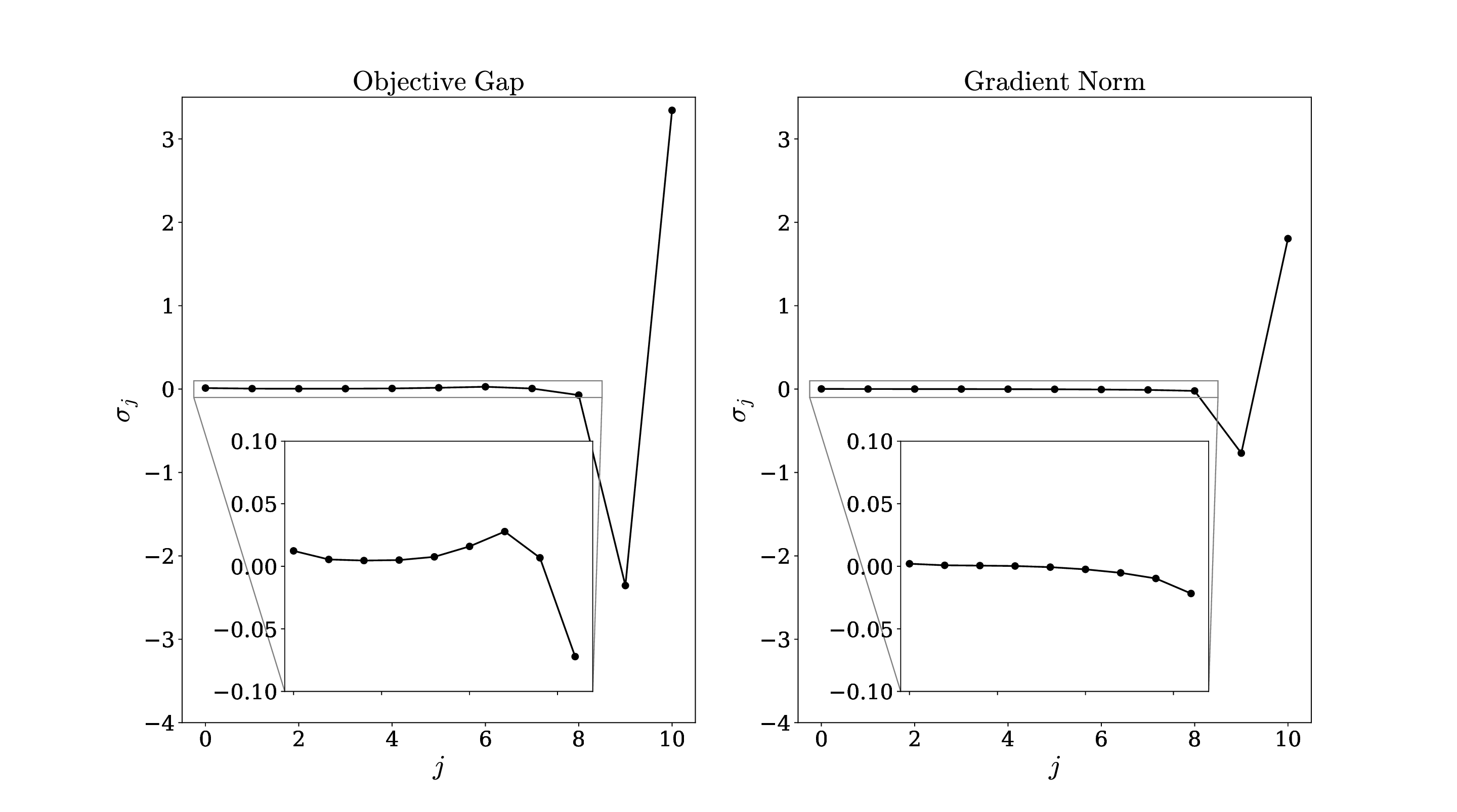}
\caption{Optimal extrapolation choice of $\sigma$ for $N=10$ under different performance measures. Under these respective choices of $\sigma$, the performance guarantees for $L=D=1$ are $0.02000$ for objective gap and $0.08443$ for gradient norm.}
\label{Fig:SigmaGlobal}
\end{figure}

\begin{table}[t]
\centering\footnotesize
\begin{tabular}{| c | c c | c c | c c |} 
 \hline
 & \multicolumn{2}{|c|}{Final Iterate} & \multicolumn{2}{|c|}{Optimal Extrapolation} & \multicolumn{2}{|c|}{Optimal Simple Extrapolation}  \\ [0.5ex] 
 \hline
 $N$ & Obj Gap & Grad Norm & Obj Gap & Grad Norm & Obj Gap & Grad Norm \\
 \hline
 5  & 0.0455 & 0.1667 & 0.0365 & 0.1483 & 0.0390 & 0.1546 \\ 
 10 & 0.0238 & 0.0909 & 0.0200 & 0.0844 & 0.0213 & 0.0871 \\
 15 & 0.0161 & 0.0625 & 0.0139 & 0.0592 & 0.0147 & 0.0607 \\ 
 20 & 0.0122 & 0.0476 & 0.0107 & 0.0456 & 0.0113 & 0.0466 \\
 25 & 0.0098 & 0.0385 & 0.0087 & 0.0371 & 0.0091 & 0.0378 \\
 \hline
\end{tabular}
\caption{Numerical comparison between gradient descent's worst-case performance for objective gap and gradient norm at its last iterate $x_N$, at the optimal extrapolation (the solution to \eqref{Eqn:OptimizeSigma}), and at the optimal simple extrapolation (the solution to \eqref{Eqn:OptimizeSigma} restricted to $\sigma = (-(c-1),0,
\dots,0,c)$). We fix $L=D=h=1$.}\label{tab:extrapolation-performance}
\end{table}

While the optimal $\sigma$ offers a noticeable improvement to the previous convergence bound, we did not identify a clear pattern in the optimal values. Even given a construction for the optimal extrapolation for each $N$, applying it would require prior knowledge of $N$ when beginning the method in order to construct $\xSpecial$ incrementally as a second vector in memory while running gradient descent. To avoid these shortcomings, we restrict $\sigma$ to a particularly convenient structure, namely $\sigma = (-(c-1),0,\dots,0,c)$ for any extrapolation factor $c \geq 1$. This corresponds to
\begin{equation}\label{Eqn:xSpecialSimple}
    \xSpecial = x_0 + c(x_N-x_0) = x_0 - c\sum_{k=0}^{N-1}\frac{h_k}{L} g_k \ .
\end{equation}
Hence, one can interpret $\xSpecial$ as extrapolating along the vector from $x_0$ to $x_N$. We refer to this particular method of extrapolation as \textit{simple extrapolation}. Note when $c=1$, this reverts back to simply returning gradient descent's last iterate $\xSpecial = x_N$. 

This scheme's simplicity offers a few advantages. To perform this extrapolation, one needs only the initial point $x_0$ and terminal point $x_N$; no auxiliary vectors or enlarged memory storage are required. Then, whenever the gradient descent iteration is stopped, this extrapolation can be computed, without requiring a priori knowledge of $N$. The constant $c$ can be selected on-the-fly once the method stops. See \cref{Cor:ExtrapSpecific} for several convenient, provably good formulas one could use, setting $c = 1 + \Theta(\sqrt{1/N\log(N)})$.

In the remainder of this section, we focus on gradient descent with constant stepsize $h \in (0,1]$ and demonstrate a provable benefit to simple extrapolation. As a result, we show that including this simple rescaling at the final step provides a provable improvement ``for free''. In \cref{tab:extrapolation-performance}, we include the optimal performance of simple extrapolation, though this will be discussed in more detail in \cref{SubSect:OptimalExtrapolation}.

\begin{remark}[Computational Considerations]\label{Remark:memoryCost}
    In large-scale optimization settings where memory is a \textit{severely} limiting factor, the simplicity of this extrapolation becomes relevant. While accelerated momentum methods achieve faster convergence, they require the simultaneous storage of two vectors rather than just the current iterate $x_k$. For such problem instances where memory usage is at capacity from storing a single vector $x_k$, doubling the memory storage is not an option, and hence momentum methods cannot be used. When $x_0$ is a structured point such as the origin, simple extrapolations can improve convergence while incurring no added memory costs.
    
    The increased memory costs of momentum methods are also avoided by the recently developed gradient descent accelerations from longer stepsizes in the sequence, e.g., the silver pattern's $O(1/N^{1.2716})$ rate~\cite{Silver_Accel}. Such methods may be state-of-the-art in certain severely memory-constrained settings. While our proven gains are restricted to the case $h \in (0,1]$, in \cref{Sect:Numerics}, we show numerically that simple extrapolation also (slightly) improves the performance for silver stepsizes in the worst case (and in several real-world examples), offering a computational benefit at nearly no cost. Thus, this post-processing method could improve the state-of-the-art method for memory-limited problems.
\end{remark}

\subsection{Improved Convergence Guarantees from Simple Extrapolations}
We are now prepared to present our main theorem quantifying the improvement in worst-case performance derived from simple extrapolations. To first discuss the consequences of our theory, we defer the proof of this result to \cref{SubSect:ProofOfExtrap}. Our analytic results below focus on the objective gap as the performance measure. In \cref{SubSect:OptimalExtrapolation}, we return to numerically consider the minimization of the reported gradient norm, establishing similar improvements. However, our \cref{Conj:GradNorm} suggests that simple extrapolation is less effective at improving the final gradient norm size, hence our focus on the objective gap.

\begin{theorem}\label{Thm:MainExtrapThm}
    For any convex $L$-smooth function $f$ with minimizer $\xStar$, consider gradient descent with constant stepsizes $h_k = h\in(0,1]$. For any $N > 0$, define the function
    \begin{equation}\label{Eqn:psi}
        \psi_N(c) := 1 - \sum_{i=0}^{N-1} \frac{\psiNum{c}}{\detDenom{c}} > 0
    \end{equation}
    and let $\cCrit$ be the largest root of $\psi_N(c)$. For any $c \in [1,\cCrit]$, $\xSpecial = x_0 + c(x_N-x_0)$ satisfies
    \begin{equation}\label{Eqn:NewBound}
        f(\xSpecial) - f(\xStar) \leq \frac{LD^2}{4Nhc+2}
    \end{equation}
    where $D=\|x_0-\xStar\|$.
    Moreover, for any $L,D > 0$, there exists a function $f$ where equality holds.
\end{theorem}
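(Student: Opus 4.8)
I would prove the upper bound \eqref{Eqn:NewBound} by weak duality with the performance estimation SDP: exhibiting a single explicit feasible point of the dual \eqref{Eqn:dual} for the choice $\sigma = (-(c-1),0,\dots,0,c)$, with objective $vD^2$ equal to $\frac{LD^2}{4Nhc+2}$, immediately gives $p(\sigma)\le \frac{LD^2}{4Nhc+2}$; and since $p(\sigma)$ is by construction the worst-case objective gap of $\xSpecial = x_0 + c(x_N-x_0)$ over all $(x_0,f)\in\mathfrak{F}_{L,D}$, the claimed bound follows for every such $f$. Equivalently (and this is how I would actually search for the certificate), one can phrase the whole argument as a potential/Lyapunov argument: find nonnegative weights with which the smooth-convex interpolation inequalities \eqref{eq:smoothness-charcterization} for the pairs in $\INStar$ add up to exactly $f(\xSpecial)-f(\xStar)\le \frac{LD^2}{4Nhc+2}$. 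The nontrivial content is producing these weights in closed form for all $N$ and verifying the accompanying matrix inequality; I expect the guess for the weights to come from inspecting the dual solution the PEP SDP returns numerically for small $N$.

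\textbf{Key steps.} \emph{(1)} Specialize to $h_k\equiv h$: write $\xSpecial = x_0 - \frac{ch}{L}\sum_{k=0}^{N-1} g_k$ from \eqref{Eqn:xSpecialSimple}, use $x_k = x_0 - \frac{h}{L}\sum_{j<k} g_j$ and $g_\star = 0$, and expand $a_{i,j}$, $A_{i,j}(h)$, $C_{i,j}$, $B_{0,\star}$ in the coordinates $\{x_0,g_0,\dots,g_{N-1},g_\sigma\}$. \emph{(2)} Posit the multipliers $\lambda_{i,j}$, supported on the pairs $\{\star,k\}$, the consecutive pairs $\{k,k+1\}$, and the pairs linking the iterates with $\idxSpecial$, as explicit rational functions of $c$ and the index, and enforce the dual's linear constraint $\sum_{i\neq j}\lambda_{i,j}a_{i,j} = a_{\star,\idxSpecial}$; this is a finite linear system in the $f$-coordinates that should pin down the remaining free multipliers. \emph{(3)} Set $v$ so that $vD^2 = \frac{LD^2}{4Nhc+2}$, form $Z := vB_{0,\star} + \sum_{i\neq j}\lambda_{i,j}\big(A_{i,j}(h) + \frac{1}{2L}C_{i,j}\big)$, and prove $Z\succeq 0$. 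After step (1) the matrix $Z$ should be structured (arrowhead/tridiagonal-like in the $g_k$-block), so $Z\succeq 0$ reduces, via a Schur complement or an explicit $LDL^\top$-type factorization, to a single scalar inequality; I expect that scalar to be exactly $\psi_N(c)$ from \eqref{Eqn:psi}, giving $Z\succeq 0 \iff \psi_N(c)\ge 0 \iff c\in[1,\cCrit]$ (using $\psi_N(1)=1>0$). \emph{(4)} Conclude $f(\xSpecial)-f(\xStar)\le p(\sigma)\le vD^2 = \frac{LD^2}{4Nhc+2}$.

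\textbf{Tightness.} For the matching instance, take $x_0 = D$, $\xStar = 0$, and $f = \phi_{L,\eta}$, the Huber function of \eqref{Eqn:Huber}, with $\eta = \frac{D}{2cNh+1}$. Since $\eta \le \frac{D}{Nh+1}$, all of $x_0,\dots,x_{N-1}$ stay in the affine branch $\{x\ge\eta\}$ with common gradient $\eta L$, hence $\xSpecial = x_0 - cNh\eta = D\cdot\frac{cNh+1}{2cNh+1} > \eta$, and a direct computation gives $f(\xSpecial)-f(\xStar) = L\eta\,\xSpecial - \frac{L\eta^2}{2} = \frac{LD^2}{4cNh+2}$, with $\|x_0-\xStar\| = D$. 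This shows the bound is attained with equality, and together with step (4) that $p(\sigma) = \frac{LD^2}{4Nhc+2}$.

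\textbf{Main obstacle.} The crux is step (3): reverse-engineering a closed form for the optimal dual multipliers valid for all $N$, and then verifying $Z\succeq 0$. The fact that the governing quantity is the sum-of-rational-functions $\psi_N(c)$ — rather than a one-line expression — signals that the certificate is built by accumulating a residual over the $N$ iterate indices, and the delicate part will be reducing the full matrix inequality to ``$\psi_N(c)\ge 0$'' while keeping the intermediate algebra manageable. By comparison, the linear-constraint bookkeeping in step (2) and the Huber computation in the tightness part are routine, and some of the heavier simplifications can be offloaded to the accompanying \texttt{Mathematica} notebook.
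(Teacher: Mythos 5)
Your proposal is correct and follows essentially the same route as the paper: a closed-form dual certificate for the extrapolation-parameterized PEP SDP whose feasibility (nonnegative multipliers supported on the $\{\star,k\}$ and consecutive pairs, plus positive semidefiniteness of the slack matrix $Z$ via a Schur-complement reduction) is governed exactly by $\psi_N(c)\ge 0$, followed by weak duality and the Huber instance with $\eta = D/(2Nhc+1)$ for tightness. The paper carries out your step (3) by writing $Z=\tfrac12 S_c(r,t)$ with $r_i = \tfrac{ic}{2Nc-i+1}$, generalizing the Drori--Teboulle determinant recursion to show the relevant leading minor equals $\psi_N(c)$ times positive factors, and using an explicit null vector of $S_c$ to close the Schur-complement argument---precisely the ``reverse-engineered'' structure you anticipated.
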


The choice of $\psi_N$ above will be later justified in \cref{SubSubSec:Step2}. In terms of PEP introduced in \cref{Sect:Prelim}, this theorem provides an exact description of the worst-case performance of small extrapolations: the extrapolation $\sigma = (-(c-1),0,\dots, 0, c)$ has
$$ p(\sigma) = \frac{LD^2}{4Nhc+2} \qquad \forall c\in [1,\cCrit]\ . $$
Setting $c$ as large as possible (within the theorem's bounds) provides the strongest performance guarantee. To quantify the level of resulting benefit, we next bound how large the root $\cCrit$ is for a given value of $N$.

The definition of $\cCrit$ as the largest root of $\psi_N(c)$ unfortunately yields no simple closed-form expression. Considering the case of $N=2$, our function is $\psi_2(c)=\frac{-64c^3+112c^2-36c-1}{12c-1}$, so $\cCrit$ is the largest root of $-64c^3+112c^2-36c-1=0$, approximately $1.312285$.
In general, given any $N$, $\cCrit$ will be the largest root of a degree $N+1$ polynomial. In \cref{Tbl:cStarData}, we include numerical approximations of $\cCrit$ for various values of $N$. The following proposition shows $\cCrit$ shrinks at rate $1 + \Theta\left(\frac{1}{\sqrt{N \log N}}\right)$.
\begin{proposition}\label{Prop:cCritBd}
    For $N > 1$, the largest root $\cCrit$ of $\psi_N(c)$ is bounded below by
    \begin{equation}
        \cCrit > \underbrace{1 -\frac{1}{4N} + \sqrt{\frac{1}{16N^2}+\frac{1}{(2N+1)(\harmonicUpperBd)}}}_{c_\ell} > 1 + \frac{1}{4\sqrt{N \log N}}
    \end{equation}
    where $\gamma\approx 0.5772$ is the Euler-Mascheroni constant. Moreover, this bound is nearly tight  as 
    \begin{equation*}
        \cCrit < \underbrace{1 -\frac{1}{4N} + \sqrt{\frac{1}{16N^2}+\frac{3}{(3N+1)(\harmonicLowerBd)}}}_{c_u} \ . 
    \end{equation*}
\end{proposition}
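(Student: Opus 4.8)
The plan is to analyze the polynomial equation $\psi_N(c) = 0$ directly. Since $\cCrit$ is the largest root of $\psi_N$, it suffices to exhibit a value $c_\ell$ at which $\psi_N(c_\ell) > 0$ (or at least $\geq 0$) while $\psi_N$ is eventually negative for large $c$, so that a root lies beyond $c_\ell$; dually, to show $\cCrit < c_u$ we want $\psi_N(c_u) < 0$ together with the fact that $\psi_N$ has no roots past $c_u$ (equivalently, $\psi_N$ is negative and stays negative there). Concretely, I would first rewrite the summand $\frac{\psiNum{c}}{\detDenom{c}}$ in a cleaner form. Note the numerator factors as $(c-1)(2Nc - 2N + 1)(2Nc + 1)$ and the denominator is $2Nc + 4Nci - 2i^2 + 1$; both are affine in $i$ for the denominator and the numerator is $i$-independent, so the sum $\sum_{i=0}^{N-1}$ is essentially $\big[(c-1)(2Nc-2N+1)(2Nc+1)\big]\sum_{i=0}^{N-1}\frac{1}{(2Nc+1) + (4Nc - 2i)\,i}$. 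The key simplification: write $c = 1 + \delta$ and track the leading behavior in $\delta$ and $1/N$.

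Second, I would establish the harmonic-sum estimates that produce the $\log N$ factor. With $c$ close to $1$, the denominator $2Nc + 4Nci - 2i^2 + 1 \approx 2N + (4N)i - 2i^2 = 2(N + 2Ni - i^2)$, and one checks $N + 2Ni - i^2$ interpolates so that $\sum_{i=0}^{N-1}\frac{1}{N + 2Ni - i^2}$ is comparable to $\frac{1}{2N}\sum_{i=1}^{N}\frac1i \approx \frac{\log N + \gamma}{2N}$; the stated bounds $\harmonicUpperBd$ and $\harmonicLowerBd$ are exactly the standard two-sided bounds on the $N$-th harmonic number $H_N$ (from $\log N + \frac{1}{2N+1} \le H_N - \gamma \le \log N + \frac{1}{2N}$-type inequalities, refined). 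So I would prove clean inequalities bounding $\sum_{i=0}^{N-1}\frac{1}{\detDenom{c}}$ above and below by expressions of the form $\frac{H_N}{2N}$ (with a correction from the $c$-dependence and the $+1$ and $-2i^2$ terms), valid uniformly for $c$ in the relevant small interval around $1$.

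Third, plug these harmonic bounds into $\psi_N(c) = 0$. Setting $\psi_N(c) = 0$ and using $\psi_N(c) = 1 - (c-1)(2Nc-2N+1)(2Nc+1)\cdot S(c)$ where $S(c) = \sum \frac{1}{\detDenom{c}}$, the equation becomes $(c-1)(2Nc-2N+1)(2Nc+1) S(c) = 1$. Writing $c - 1 = \delta$, the factor $(2Nc - 2N+1)(2Nc+1) = (2N\delta + 1)(2N\delta + 2N + 1)$. Bounding $S(c)$ between its harmonic estimates turns this into a quadratic inequality in $\delta$: something like $\delta(2N\delta + 1)\cdot\frac{C}{2N} \gtrless 1$ with $C$ between $H_N$-type quantities, i.e. $2N\delta^2 + \delta - \frac{2}{C} \gtrless 0$. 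Solving this quadratic gives $\delta \gtrless \frac{-1 + \sqrt{1 + 16N/C}}{4N} = -\frac{1}{4N} + \sqrt{\frac{1}{16N^2} + \frac{1}{NC}}$, which matches the form of $c_\ell - 1$ and $c_u - 1$ once $C$ is replaced by $(2N+1)(\harmonicUpperBd)$ and $\frac{(3N+1)(\harmonicLowerBd)}{3}$ respectively. Finally, the cruder bound $c_\ell > 1 + \frac{1}{4\sqrt{N\log N}}$ follows by dropping the $\frac{1}{16N^2}$ term inside the square root and using $\harmonicUpperBd \le 2\log N$ for $N$ large (and checking small $N$ directly).

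The main obstacle I anticipate is controlling the $c$-dependence of the denominator $\detDenom{c}$ uniformly: we need the harmonic estimate to hold not just at $c = 1$ but across the whole interval $[1, c_u]$, and the quadratic-in-$\delta$ reduction must be done with inequalities pointing the right way at each step (upper bound on $S$ for the lower bound on $\cCrit$, lower bound on $S$ for the upper bound). Getting the constants to line up exactly with $(2N+1)$ versus $(3N+1)$ and $\frac{1}{16N^2}$ versus $\frac{3}{\cdots}$ will require careful but routine bookkeeping on the $-2i^2$ correction term, which shifts the effective "harmonic denominator"; I would handle this by bounding $2Nc + 4Nci - 2i^2 + 1$ between $2Nc(1 + 2i) \cdot(\text{something})$ and a matching lower expression, or equivalently by comparing $\sum \frac{1}{\detDenom{c}}$ with $\int$ bounds. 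Monotonicity of $\psi_N$ past its largest root (needed so that $\psi_N(c_u) < 0 \Rightarrow \cCrit < c_u$) should follow from the fact that each summand $\frac{\psiNum{c}}{\detDenom{c}}$ is increasing in $c$ for $c > 1$, making $\psi_N$ decreasing there.
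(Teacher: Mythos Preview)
Your proposal is essentially the same approach as the paper's. The paper carries out exactly what you outline: it substitutes $c = 1 + \frac{s}{2N}$ (your $\delta$ rescaled), bounds $\psi(s)$ above and below by functions quadratic in $s$ via harmonic-sum estimates on the $i$-summation, solves the resulting quadratics to produce $c_\ell$ and $c_u$, and handles the final inequality $c_\ell > 1 + \tfrac{1}{4\sqrt{N\log N}}$ by the same crude comparison you suggest. The one place the paper is sharper than your sketch is in reducing the cubic-in-$\delta$ expression $(c-1)(2Nc-2N+1)(2Nc+1)\,S(c)$ to a genuine quadratic: rather than bounding $S(c)$ alone, it introduces intermediate functions $\hat\psi_\ell,\hat\psi_u$ that simultaneously replace the $c$-dependent numerator factor $(2N+s+1)$ by the constants $(2N+1)$ and $(3N+1)$ (the latter using $s \le N$) and drop or adjust the $(2i+1)s$ term in the denominator, then verifies each replacement goes the right direction by an explicit difference computation. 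This is precisely the ``careful but routine bookkeeping on the $-2i^2$ correction term'' you anticipated, and it is what makes the $(2N+1)$ versus $(3N+1)$ constants fall out cleanly.
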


\begin{table}
    \centering\footnotesize
    \begin{tabular}{|c|c|c|c|c|}
        \hline
         $N$ & $\cCrit$ & $c_u$ & $c_\ell$ & $1+1/(4\sqrt{N \log N})$\\
         \hline
         $1$    & $1.5$      & $1.672029$ & $1.359869$ & - \\ 
         $10$   & $1.121974$ & $1.158494$ & $1.104917$ & $1.052099$ \\
         $100$  & $1.034804$ & $1.041404$ & $1.028570$ & $1.011650$ \\
         $10^4$ & $1.002878$ & $1.003171$ & $1.002235$ & $1.000824$ \\
         $10^6$ & $1.000246$ & $1.000263$ & $1.000186$ & $1.000067$ \\
         $10^8$ & $1.000022$ & $1.000023$ & $1.000016$ & $1.000006$ \\
        \hline
    \end{tabular}
    \caption{Approximate values of $\cCrit$ and the upper and lower bounds from \cref{Prop:cCritBd}.}
    \label{Tbl:cStarData}
\end{table}

This proposition is easily proven by establishing upper and lower bounds on $\psi_N(c)$ but involves some in-depth calculations. We therefore defer the proof to \cref{App:ProofOfCBound}.
Combining these simpler lower bounds on $\cCrit$ with \cref{Thm:MainExtrapThm} yields the following explicit convergence bounds for gradient descent with simple extrapolations.
\begin{corollary}\label{Cor:ExtrapSpecific}
     For any convex $L$-smooth function $f$ with minimizer $\xStar$ and $N > 1$, gradient descent with constant stepsizes $h_k=h\in(0,1]$ and simple extrapolation by factor
    \begin{equation}\label{Eqn:cLowerBd}
        c = 1 -\frac{1}{4N} + \sqrt{\frac{1}{16N^2}+\frac{1}{(2N+1)(\harmonicUpperBd)}}
    \end{equation}
    has $\xSpecial = x_0 + c(x_N-x_0)$ satisfy
    \begin{equation}\label{Eqn:ExtrapConvergenceSpec}
        f(\xSpecial) - f(\xStar) \leq \frac{LD^2}{4Nh+\left(\sqrt{1+\frac{16N^2}{(2N+1)(\harmonicUpperBd)}}-1\right)h  + 2 }
    \end{equation}
    where $D=\|x_0-\xStar\|$ and $\gamma\approx 0.5772$ is the Euler-Mascheroni constant. As a simpler, weaker bound, setting $c = 1 + \frac{1}{4 \sqrt{ N \log N}}$ ensures that
    \begin{equation}\label{Eqn:ExtrapConvergenceSimple}
        f(\xSpecial) - f(\xStar) \leq \frac{LD^2}{4Nh + \sqrt{\frac{N}{\log N}}h + 2} \ .  
    \end{equation}
\end{corollary}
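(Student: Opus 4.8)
The plan is to obtain both displayed convergence bounds as direct consequences of \cref{Thm:MainExtrapThm} and \cref{Prop:cCritBd}; the only actual computation is a short algebraic rearrangement of the denominator $4Nhc+2$.

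First I would treat the choice \eqref{Eqn:cLowerBd}, $c = c_\ell = 1 - \frac{1}{4N} + \sqrt{\frac{1}{16N^2} + \frac{1}{(2N+1)(\harmonicUpperBd)}}$. By \cref{Prop:cCritBd} we have $1 < c_\ell < \cCrit$, so $c \in [1,\cCrit]$ and \cref{Thm:MainExtrapThm} applies, giving $f(\xSpecial) - f(\xStar) \leq \frac{LD^2}{4Nhc+2}$. It then remains only to simplify $4Nhc+2$. The key manipulation is to absorb the factor $16N^2$ into the square root, $4N\sqrt{\frac{1}{16N^2} + \frac{1}{(2N+1)(\harmonicUpperBd)}} = \sqrt{1 + \frac{16N^2}{(2N+1)(\harmonicUpperBd)}}$, together with $4N\bigl(1 - \frac{1}{4N}\bigr) = 4N - 1$; this yields $4Nhc = 4Nh + h\bigl(\sqrt{1 + \frac{16N^2}{(2N+1)(\harmonicUpperBd)}} - 1\bigr)$, so that $4Nhc+2$ is exactly the denominator appearing in \eqref{Eqn:ExtrapConvergenceSpec}.

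Next I would handle the simpler choice $c = 1 + \frac{1}{4\sqrt{N\log N}}$. The chain of inequalities in \cref{Prop:cCritBd} gives $1 < c < c_\ell < \cCrit$, so once more $c \in [1,\cCrit]$ and \cref{Thm:MainExtrapThm} applies. Since this $c$ is smaller than $c_\ell$ and $\frac{LD^2}{4Nhc+2}$ is decreasing in $c$, the resulting bound is weaker than \eqref{Eqn:ExtrapConvergenceSpec} but still valid. The simplification here is immediate: $4Nh \cdot \frac{1}{4\sqrt{N\log N}} = \frac{Nh}{\sqrt{N\log N}} = h\sqrt{\frac{N}{\log N}}$, whence $4Nhc+2 = 4Nh + h\sqrt{\frac{N}{\log N}} + 2$, which is \eqref{Eqn:ExtrapConvergenceSimple}.

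I do not expect any genuine obstacle, which is why this is stated as a corollary: both ingredients are already established. The only point deserving a moment of care is verifying the hypothesis $c \leq \cCrit$ of \cref{Thm:MainExtrapThm} --- supplied precisely by the inequality chain of \cref{Prop:cCritBd} --- and noting that the guarantee $\frac{LD^2}{4Nhc+2}$ only improves as $c$ increases, so replacing $c_\ell$ by the cleaner $1 + \frac{1}{4\sqrt{N\log N}}$ sacrifices tightness but not correctness.
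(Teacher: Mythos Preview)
Your proposal is correct and takes essentially the same approach the paper intends: the corollary is stated without explicit proof, immediately after remarking that it follows by combining the lower bounds on $\cCrit$ from \cref{Prop:cCritBd} with \cref{Thm:MainExtrapThm}. Your algebraic simplifications of $4Nhc+2$ for both choices of $c$ are exactly what is needed to fill in the omitted details.
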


This result shows that adding a simple extrapolation achieves the same worst-case convergence improvement as taking an additional $O(\sqrt{N/\log N})$ gradient steps. Note this gain increases with $N$ while the cost of the extrapolation step remains fixed (a vector-scalar multiplication and a vector sum). While this is still a lower-order improvement, this level of benefit from such a simple modification is rather surprising.

Finally, given the benefits of simple extrapolation proven above, we next consider its limitations. In the theorem below, we formalize the notion that too large of an extrapolation has a negative effect. An extrapolation that is too large may ``overshoot'' the minimizer, leading to worse performance. In particular, we show simple extrapolation factors any larger than $1+O(1/\sqrt{N})$ can perform strictly worse than the known worst case for gradient descent's last iterate.

\begin{proposition} \label{Prop:cHat}
    There exists a convex $L$-smooth function $f$ with minimizer $\xStar$ such that gradient descent with constant stepsizes $h_k = h\in(0,1]$ and any
    $$c > \cHat := \frac{1 + \sqrt{\frac{1}{2Nh+1}}}{1 - (1-h)^N} $$
    has $\xSpecial = x_0 + c(x_N-x_0)$ satisfy
    \begin{equation*}
        f(\xSpecial) - f(\xStar) > \frac{LD^2}{4Nh+2} \geq f(x_N) - f(\xStar) \ . 
    \end{equation*}
\end{proposition}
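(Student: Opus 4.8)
The plan is to exhibit a single concrete Huber-type worst-case instance, run gradient descent on it explicitly, compute $f(\xSpecial)$ for the simple extrapolation $\xSpecial = x_0 + c(x_N - x_0)$, and show that for $c > \cHat$ the resulting objective gap strictly exceeds $LD^2/(4Nh+2)$. The last inequality $LD^2/(4Nh+2) \geq f(x_N) - f(\xStar)$ is just the known tight rate~\eqref{Eqn:ObjGapRate} for constant stepsizes $h\in(0,1]$ (cf. \cref{Lem:Huber-Tightness} and the discussion following it), so the real content is the strict lower bound on $f(\xSpecial)$.

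First I would take $f = \phi_{L,\eta}$ the Huber function~\eqref{Eqn:Huber} with $x_0 = D$, but now choosing $\eta$ \emph{smaller} than in \cref{Lem:Huber-Tightness} so that gradient descent's iterates cross into the quadratic region near $\xStar = 0$. Specifically, with constant stepsize $h$, as long as iterates stay in the linear region we have $x_k = D - \eta h k$ and $\nabla f(x_k) = L\eta$; this recursion is valid only while $x_k \geq \eta$. If instead we pick $\eta$ so that all of $x_0,\dots,x_N$ are \emph{inside} the quadratic region $|x|\le\eta$ (i.e. $\eta \geq D = x_0$, which forces $x_{k+1} = (1-h)x_k$), then $x_N = (1-h)^N D$ and the extrapolated point is $\xSpecial = D + c((1-h)^N D - D) = D(1 - c(1-(1-h)^N))$. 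For $c$ large this drives $\xSpecial$ negative and large in magnitude, eventually leaving the quadratic region on the far side. The threshold $\cHat$ is precisely the value of $c$ at which $|\xSpecial|$ equals the magnitude that makes the Huber value at $\xSpecial$ equal $LD^2/(4Nh+2)$: solving $\frac{L}{2}\xSpecial^2 = \frac{LD^2}{4Nh+2}$ (boundary of the quadratic regime) gives $|\xSpecial| = D\sqrt{1/(2Nh+1)}$, and setting $D(c(1-(1-h)^N) - 1) = D\sqrt{1/(2Nh+1)}$ and solving for $c$ yields exactly $\cHat = \dfrac{1 + \sqrt{1/(2Nh+1)}}{1 - (1-h)^N}$. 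So for $c > \cHat$ we have $|\xSpecial| > D\sqrt{1/(2Nh+1)}$, hence $f(\xSpecial) \geq \frac{L}{2}\xSpecial^2 > \frac{LD^2}{4Nh+2} = \frac{LD^2}{2(2Nh+1)}$ — using that $\phi_{L,\eta}(x) \geq \frac{L}{2}x^2$ fails, so I should instead note $\phi_{L,\eta}$ restricted to the quadratic region \emph{is} $\frac{L}{2}x^2$ and in the linear region it lies \emph{below} the parabola; thus to get a clean lower bound I want $\xSpecial$ itself inside the quadratic region, which requires choosing $\eta$ large enough, namely $\eta \geq |\xSpecial|$. This is arrangeable since we only need $\eta \geq \max(D, |\xSpecial|)$, and $f(\xStar) = 0$.

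The steps in order: (1) fix the instance $x_0 = D$, $\xStar = 0$, $f = \phi_{L,\eta}$ with $\eta$ to be chosen; (2) observe that choosing $\eta \geq D$ forces every GD iterate to satisfy the linear recursion $x_{k+1} = (1-h)x_k$, giving $x_N = (1-h)^N D$; (3) compute $\xSpecial = D - c(1-(1-h)^N)D$ and note $|\xSpecial| = D(c(1-(1-h)^N) - 1)$ once $c > 1/(1-(1-h)^N)$ (which $\cHat$ exceeds); (4) choose $\eta = \max(D,|\xSpecial|)$ so $\xSpecial$ lies in the quadratic region and $f(\xSpecial) = \frac{L}{2}\xSpecial^2$; (5) verify that $c > \cHat$ is equivalent to $|\xSpecial| > D\sqrt{1/(2Nh+1)}$, hence $f(\xSpecial) = \frac{L}{2}\xSpecial^2 > \frac{LD^2}{2(2Nh+1)} = \frac{LD^2}{4Nh+2}$; (6) combine with the known tight bound $f(x_N) - f(\xStar) \leq \frac{LD^2}{4Nh+2}$ to conclude the displayed strict inequality, after normalizing $f(\xStar) = 0$.

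The main obstacle I anticipate is bookkeeping around which region each point lands in: I need $x_0,\dots,x_N$ all in the quadratic region (so $\eta \geq D$ and, since $|x_k|$ is decreasing, this suffices) \emph{and} $\xSpecial$ in the quadratic region (so $\eta \geq |\xSpecial|$). These are compatible — just take $\eta = \max(D, |\xSpecial|)$ — but one must double-check that enlarging $\eta$ doesn't disturb the iterate formula; it doesn't, since $|x_k| \leq D \leq \eta$ throughout. A secondary subtlety is confirming $\cHat > 1/(1-(1-h)^N) > 1$ so that the sign computation for $|\xSpecial|$ is valid and $c > \cHat$ indeed lies in the regime where $\xSpecial$ overshoots past the origin; this is immediate since $\sqrt{1/(2Nh+1)} > 0$. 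Everything else is elementary algebra, most of which can be pushed to the \texttt{Mathematica} notebook if desired.
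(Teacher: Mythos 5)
Your proposal is correct and is essentially the paper's own proof: the paper simply takes the pure quadratic $f(x)=\frac{L}{2}x^2$ with $x_0=D$, computes $x_N=(1-h)^N D$ and $f(\xSpecial)=\frac{LD^2}{2}(1+c((1-h)^N-1))^2$, and observes that $c>\cHat$ is exactly the threshold making this exceed $\frac{LD^2}{4Nh+2}$ --- the same algebra you carry out inside the quadratic region of your Huber function. The only nit is that your $\eta=\max(D,|\xSpecial|)$ depends on $c$, which technically breaks the proposition's ``$\exists f\ \forall c$'' quantifier order; using the quadratic directly (as the paper does) removes this issue at no cost.
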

\begin{proof}
    Consider $f(x) = \frac{L}{2}x^2$ and $x_0=D$. Then gradient descent~\eqref{Eqn:GDStep} has iterates contract towards zero with $x_N = (1-h)^N D$. As a result, $\xSpecial = x_0 + c(x_N-x_0) = D + c(D(1-h)^N - D)$ and so $f(\xSpecial) = \frac{LD^2}{2}(1 + c((1-h)^N - 1))^2$. Plugging in the assumed lower bound on $c$, it immediately follows that
    $$f(\xSpecial) > \frac{LD^2}{2}\left(1 + \frac{1 + \sqrt{\frac{1}{2Nh+1}}}{1 - (1-h)^N}((1-h)^N - 1)\right)^2 = \frac{LD^2}{4Nh+2} \ . $$
    The bound on $f(x_N) - f(x_\star)$ follows from~\eqref{Eqn:ObjGapRate} (via \cite{FirstPEP}).
\end{proof}

From the results of \cref{Thm:MainExtrapThm} and \cref{Prop:cHat}, it is clear that there exists some optimal extrapolation factor $\cOpt \in [\cCrit, \cHat]$. Hence the optimal extrapolation factor shrinks at a rate between $1+\Theta(1/\sqrt{N\log(N)})$ and $1+\Theta(1/\sqrt{N})$.
In \cref{Sect:Numerics}, our results suggest that $\cOpt$ closely follows the $1+\Theta(1/\sqrt{N\log(N)})$ behavior of $\cCrit$, see \cref{Fig:cOptZoom}. Consequently, we expect the development of more sophisticated worst-case functions than the quadratic above would close this gap.

\subsection{Proof of Theorem~\ref{Thm:MainExtrapThm}} \label{SubSect:ProofOfExtrap}
Performance estimation provides the foundation for our exact analysis of the impact of simple extrapolation on worst-case guarantees. Recalling our previous notation for the associated PEP problem~\eqref{Eqn:Interpolation}, now parameterized by $c$ instead of $\sigma$, we denote
\begin{equation*}
    p(c) := \begin{cases} \max_{x_0,\xStar,f} \quad & f(\xSpecial) - f(\xStar) \\
                    \text{s.t.} \quad & f_i\geq f_j+\langle g_j,x_i-x_j\rangle +\frac{1}{2L}\| g_i - g_j\|^2 \quad \quad \forall i\neq j \in \mathcal{I}\\
                    & \|x_0-\xStar\| \leq D \\
                    & g_\star = 0 \\
                    & x_{k+1} = x_k - \frac{h}{L} g_k \quad \quad k=0,\dots,N-2\\
                    & \xSpecial = x_0 - \sum_{k=0}^{N-1} \frac{ch}{L} g_k \ . 
                \end{cases}
\end{equation*}
Then following the same SDP relaxation~\eqref{Eqn:primal} and applying weak duality, let $d(c)$ denote the upper bounding SDP~\eqref{Eqn:dual}, also parameterized by $c$ instead of $\sigma$:
\begin{equation}
    d(c) := \begin{cases}
        \min_{v,\lambda,Z} \quad  & v D^2 \\
        \text{s.t. } & \sum_{i\neq j} \lambda_{i,j} a_{i,j} - a_{\star,\idxSpecial} = 0 \\
        & v B_{0,\star} + \sum_{i\neq j} \lambda_{i,j} (A_{i,j}(h) + \frac{1}{2L}C_{i,j}) = Z  \\
        & Z \succeq 0 \\
        & v \geq 0, \lambda_{i,j} \geq 0  \quad \forall i\neq j \in \mathcal{I} \ .
    \end{cases}
\end{equation}
To prove our claimed upper bound on $f(\xSpecial)-f(\xStar)$, it then suffices to show the same upper bound on $d(c)$. This can be done by providing any dual feasible point with an objective value matching our claimed convergence guarantee.

In the special case of $c=1$ (that is, just reporting the last iterate without extrapolation),~\cite{FirstPEP} proved a bound matching our claim precisely through this process of constructing a dual certificate. Our derivation of a dual certificate is a generalization of their original method to simple extrapolations. In particular, our proof follows the same structure: we design a certificate with a slightly generalized form to accommodate $c$ and verify its feasibility via the same usage of Sylvester's criterion. The needed technical contribution lies in modifying their original certificate to account for $c$, identifying the function $\psi_N(c)$ (See \eqref{Eqn:psi}) exactly characterizing the feasibility of our resulting certificate, and quantifying its improved convergence rate.

Our construction of candidate dual solutions utilizes the following parameters
\begin{align}\label{Eqn:certComponents} 
        r_i &= \frac{ic}{2Nc-i+1} \quad \quad i=1,\dots,N\\
        t & = \frac{L}{2Nhc+1} \nonumber \ .
\end{align}
Given these, we consider dual solutions to $d(c)$ given by
\begin{align} \label{Eqn:dualVarVals}
    v &= \frac{1}{2}t \\
    \lambda_{i,j} &= \begin{cases}
        r_1 \quad & \text{if } i=-1,j=0 \\
        r_{j+1}-r_j \quad & \text{if } i=-1, 1\leq j \leq N-1 \\
        1-r_N \quad & \text{if } i=-1,j=N \\
        r_j \quad & \text{if } i=j-1, 1\leq j \leq N \\
        0 \quad & \text{otherwise}
    \end{cases}
\end{align}
where for simplicity we let indices $-1$ and $N$ correspond to $\star$ and $\idxSpecial$, respectively. When $c=1$ and the considered extrapolation disappears, this construction exactly reduces to that of \cite{FirstPEP}, in which the authors use $r_i = \frac{i}{2N-i+1}$ and $t = \frac{L}{2Nh+1}$ to prove the bound $LD^2/(4Nh+2)$.

We claim by construction these satisfy the first linear constraint $\sum_{i\neq j} \lambda_{i,j} a_{i,j} - a_{\star,\idxSpecial} = 0$.
Further, we claim that the second linear constraint is satisfied by setting $Z = \frac{1}{2}S_c(r,t)$ where $S_c(r,t)$ is defined as the following block matrix
\begin{equation}\label{Eqn:SMatrix}
    S_c(r,t) = \begin{pmatrix}
        t & q_c^T \\
        q_c & \quad \frac{1-h}{L}Q_c + \frac{h}{L}W_c
    \end{pmatrix}
\end{equation}
with components given by $q_c = (-r_1, r_1-r_2, \dots, r_{N-1}-r_N, r_N-1)^T$,
\begin{equation}
    Q_c = \begin{pmatrix} \label{Eqn:Mat0Def}
        2r_1 & -r_1 \\
        -r_1 & 2r_2 & -r_2 \\
        & -r_2 & 2r_3 & -r_3 \\
        & & \ddots & \ddots & \ddots \\
        & & & -r_{N-1} & 2r_N & -r_N \\
        & & & & -r_N & 1
    \end{pmatrix} \ , 
\end{equation}
and
\begin{equation}\label{Eqn:Mat1Def}
    W_c = \begin{pmatrix}
        2r_1 & r_2-r_1 & \dots & r_N-r_{N-1} & c-r_N\\
        r_2-r_1 & 2 r_2 & \dots & r_N-r_{N-1} & c-r_N \\
        \vdots & & \ddots & & \vdots \\
        r_N-r_{N-1} & r_N-r_{N-1} & \dots & 2r_N & c-r_N \\
        c-r_N & c-r_N & \dots & c-r_N & 1
    \end{pmatrix} \ . 
\end{equation}
For completeness, these two claimed identities are also verified in \cref{App:ScIdentity}. To complete our proof, we verify nonnegativity of $\lambda$ (\cref{SubSubSec:Step1}) and positive semidefiniteness of $Z$ (\cref{SubSubSec:Step2}) establishing dual feasibility, from which we can apply weak duality to prove our claimed rate (\cref{SubSubSec:Step3}). Lastly, we apply a Huber function to show our resulting worst-case bound is the best possible (\cref{SubSubSec:Step4}).

\subsubsection{Verification of Nonnegativity of \texorpdfstring{$\lambda$}{Lambda}} \label{SubSubSec:Step1}
We can easily confirm that $\lambda_{i,j} \geq 0$ for all $i,j \in \mathcal{I}$. Clearly $r_i > 0$ for all $i = 1,\dots,N$, provided that $c > \frac{N-1}{2N}$. And similarly, for $c > \frac{N-1}{2N}$, we see that $r_{i+1}-r_i = \frac{c(2Nc+1)}{(2Nc-i)(2Nc-i+1)} > 0$. Lastly, we have $1-r_N = \frac{Nc-N+1}{2Nc-N+1} > 0$ for $c > \frac{N-1}{N}$. We therefore have that $\lambda \geq 0$ for $c > \frac{N-1}{N}$, and the remaining verification steps will hold for such $c$ as well. However, for the purposes of simple extrapolation, we only consider $c\geq 1$.

\subsubsection{Verification of Positive Semidefiniteness of \texorpdfstring{$Z$}{Z}} \label{SubSubSec:Step2}
Note it is equivalent to show $S_c$ is positive semidefinite since it is just a rescaling of $Z$. 
Our first lemma is a generalization of the proof of \cite[Lemma 3.3]{FirstPEP} that shows the matrix $W_c$ is positive semidefinite for $c=1$. By allowing for general $c$, we obtain the following more complex determinant formulas.
\begin{lemma} \label{Lem:detFormula}
    Let $W_c$ be defined as in \eqref{Eqn:Mat1Def} with entries set as in \eqref{Eqn:certComponents} and denote the $k$-th principal submatrix of $W_c$ by $M_k(c)$. Then for $k=0,\dots,N-1$,
    \begin{equation}\label{Eqn:detMk}
        \det M_k(c) = c^{k+1}\left(1+\sum_{i=0}^k \frac{2Nc-2k-1}{\detDenom{c}}\right) \prod_{i=0}^k \frac{\detDenom{c}}{(2Nc-i)^2}
    \end{equation}
    and
    \begin{equation}\label{Eqn:detMN}
        \det M_N(c) = c^N \left(1 - \sum_{i=0}^{N-1} \frac{\psiNum{c}}{\detDenom{c}}\right) \prod_{i=0}^{N-1} \frac{\detDenom{c}}{(2Nc - i)^2} \ .
    \end{equation}
\end{lemma}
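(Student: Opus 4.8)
The plan is to compute $\det M_k(c)$ by a recursion on the principal submatrices of $W_c$, exploiting the near-arrowhead/near-tridiagonal structure of $W_c$. The matrix $W_c$ is almost a tridiagonal matrix plus a rank-one ``spike'' in the last row and column (the entries $c - r_N$ and the off-diagonal pattern $r_{j+1}-r_j$). First I would isolate the $k\times k$ leading principal submatrix $M_k(c)$ for $k \le N-1$: this submatrix does \emph{not} yet involve the final ``$1$'' on the diagonal, so it has the cleaner structure where row $i$ and column $i$ carry the weight $r_i$ and $r_{i+1}$ in a predictable way. The key is to find the right row/column operations — subtracting consecutive rows, as in Drori--Teboulle's original argument for $c=1$ — to reduce $W_c$ to an (upper or lower) triangular form, or at least to a block-triangular form whose determinant telescopes into the claimed product $\prod_{i=0}^k \frac{\detDenom{c}}{(2Nc-i)^2}$ times the correction factor $\bigl(1 + \sum_{i=0}^k \tfrac{2Nc-2k-1}{\detDenom{c}}\bigr)$.

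Concretely, the steps I would carry out, in order, are: (1) set up a recursion expressing $\det M_{k}(c)$ in terms of $\det M_{k-1}(c)$ by expanding along the last row and column of $M_k(c)$ — the off-diagonal entries of $W_c$ in a given row are all equal within that row's ``tail,'' so the cofactor expansion collapses to essentially two terms; (2) guess (from the $c=1$ case in~\cite[Lemma 3.3]{FirstPEP} and from small cases $k=0,1,2$) the closed form~\eqref{Eqn:detMk}, then verify it satisfies the recursion by a direct algebraic check — pulling out the factor $c$, the factor $(2Nc-k)^{-2}\detDenom{k}$ (with $i=k$), and checking that the parenthetical sum updates correctly when passing from $k-1$ to $k$; (3) finally, treat $M_N(c) = W_c$ separately, since its last diagonal entry is $1$ rather than following the $r$-pattern — here I would expand once more along the last row/column, writing $\det M_N(c)$ in terms of $\det M_{N-1}(c)$ and the Schur-complement contribution of the last entry, and then simplify, using the identity that relates $\sum_{i=0}^{N-1}\tfrac{2Nc-2N-1}{\detDenom{c}}$-type sums to $\sum_{i=0}^{N-1}\tfrac{\psiNum{c}}{\detDenom{c}}$, which is exactly the numerator appearing in $\psi_N(c)$. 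Algebraic simplifications of this last flavor are the kind the paper defers to its \texttt{Mathematica} notebook, so I would lean on that for the bookkeeping.

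The main obstacle I expect is step (3): reconciling the ``generic row'' formula~\eqref{Eqn:detMk} (which has the symmetric correction term $\sum \tfrac{2Nc-2k-1}{\detDenom{c}}$) with the final-row formula~\eqref{Eqn:detMN} (whose correction term is the genuinely different $\sum\tfrac{\psiNum{c}}{\detDenom{c}}$, the quantity defining $\psi_N(c)$). The discrepancy arises precisely because the $(N,N)$ entry of $W_c$ breaks the pattern — it is $1$ instead of $2r_N$ and the spike entries are $c - r_N$ rather than following $r_{j+1}-r_j$ — so the clean telescoping of steps (1)--(2) fails at the last step and one must carefully track how the ``missing'' term $c - 1 + (r_N - c)\cdot(\text{stuff})$ propagates through the cofactor expansion. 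Verifying that this produces exactly $\psi_N(c)\,c^N\prod_{i=0}^{N-1}\tfrac{\detDenom{c}}{(2Nc-i)^2}$, and in particular that the resulting polynomial identity in $c$ holds, is where the real computation lies; the rest is a routine (if tedious) induction.
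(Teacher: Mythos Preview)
Your overall strategy---derive a recursion on the leading principal minors following Drori--Teboulle, verify the closed form by induction, then handle the final row separately---is exactly what the paper does. There is one correction worth flagging: the recursion you will actually obtain is a \emph{three}-term recurrence,
\[
\det M_k \;=\; \alpha_k \det M_{k-1} \;-\; \beta_k \det M_{k-2},
\qquad
\alpha_k = d_k - \tfrac{2a_k^2}{a_{k-1}} + \tfrac{a_k^2 d_{k-1}}{a_{k-1}^2},
\quad
\beta_k = a_k^2\Bigl(1 - \tfrac{d_{k-1}}{a_{k-1}}\Bigr)^{2},
\]
where $d_i = 2r_{i+1}$ and $a_i = r_{i+1}-r_i$; expanding along the last row alone does \emph{not} collapse to a clean expression in $\det M_{k-1}$ only, because the off-diagonal entries in that row are $a_k,\dots,a_k$ (all equal) and the resulting cofactors are not principal minors. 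The paper imports this three-term recursion verbatim from~\cite[Lemma~3.3]{FirstPEP}, plugs in the generalized values of $d_i,a_i$ (now depending on $c$), checks the base cases $M_0,M_1$, and then invokes the identical inductive argument from that reference to establish~\eqref{Eqn:detMk}. For the final step you should likewise not reach for a Schur-complement argument: the paper simply reapplies the same three-term recursion one more time with the anomalous values $d_N=1$ and $a_N=c-r_N$, computes the resulting $\alpha_N,\beta_N$, and defers the algebraic simplification to \texttt{Mathematica}---precisely the bookkeeping you anticipated.
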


The proof of this result is deferred to \cref{App:ProofOfDetFormula}. This explicit formula for the determinant yields the following result on the positive definiteness of $W_c$.
\begin{lemma}\label{Lem:Mat1PD}
    Let $W_c$ be defined as in \eqref{Eqn:Mat1Def} with entries set as in \eqref{Eqn:certComponents}. Then $W_c$ is positive definite if and only if $\psi_N(c) > 0$, with $\psi_N(c)$ defined as in \eqref{Eqn:psi}
\end{lemma}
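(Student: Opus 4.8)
The plan is to use \cref{Lem:detFormula} together with the standard fact that a symmetric matrix is positive definite if and only if all of its leading principal minors are strictly positive (Sylvester's criterion). So I would first argue that the claim ``$W_c \succ 0$'' is equivalent to ``$\det M_k(c) > 0$ for all $k = 0, \dots, N$'', where $M_k(c)$ denotes the $k$-th leading principal submatrix as in \cref{Lem:detFormula} and $M_N(c) = W_c$ itself.

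Next I would plug in the two determinant formulas from \cref{Lem:detFormula}. The key observation is that each determinant is a product of three factors: a power of $c$ (which is positive for $c \geq 1$), a product $\prod_{i=0}^{k} \frac{\detDenom{c}}{(2Nc-i)^2}$, and a ``correction'' factor in parentheses. For the product factor, the denominators $(2Nc-i)^2$ are manifestly positive, so I need that each $\detDenom{c} = 2Nc + 4Nci - 2i^2 + 1 > 0$ for $i = 0, \dots, N-1$ and $c \geq 1$; this is a routine check (for $c \geq 1$ it is at least $2N + 4Ni - 2i^2 + 1$, and since $i \leq N-1$ one has $2i^2 \leq 2i(N-1) < 4Ni$, so this is positive). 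Hence the sign of $\det M_k(c)$ is governed entirely by the parenthetical factor: for $k < N$ it is $1 + \sum_{i=0}^{k} \frac{2Nc-2k-1}{\detDenom{c}}$, and for $k = N$ it is exactly $1 - \sum_{i=0}^{N-1}\frac{\psiNum{c}}{\detDenom{c}} = \psi_N(c)$.

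The remaining work is to show that whenever $\psi_N(c) > 0$ (and $c \geq 1$), all the earlier parenthetical factors (for $k = 0, \dots, N-1$) are also strictly positive, so that positivity of the single quantity $\psi_N(c)$ already forces $W_c \succ 0$; and conversely, since $\det M_N(c)$ has the same sign as $\psi_N(c)$, failure of $\psi_N(c) > 0$ immediately breaks Sylvester's criterion, giving the ``only if'' direction essentially for free. For the ``if'' direction, I expect the cleanest route is a monotonicity argument: show that the sequence of parenthetical factors $P_k(c) := 1 + \sum_{i=0}^{k}\frac{2Nc-2k-1}{\detDenom{c}}$ is decreasing in $k$, or more directly that each $P_k(c)$ dominates (is at least) the final factor $\psi_N(c)$ (or at least is positive whenever $\psi_N(c) \ge 0$); this can likely be extracted from the recursive structure of the determinants used to prove \cref{Lem:detFormula}, since consecutive leading minors of a tridiagonal-like matrix satisfy a two-term recursion and a positive-definite matrix cannot have its minors change sign. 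Alternatively — and this is probably the slickest argument — since $M_k(c)$ is a principal submatrix of $M_{k+1}(c)$, if $W_c = M_N(c)$ were positive definite then all $M_k(c)$ would be too, so I only need the forward implication; that is, I should prove directly that $\psi_N(c) > 0$ implies each $\det M_k(c) > 0$ by comparing $\det M_{k+1}$ to $\det M_k$ and tracking the sign through the interlacing/recurrence.

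The main obstacle I anticipate is precisely this last point: turning ``$\psi_N(c) > 0$'' into ``$\det M_k(c) > 0$ for every $k$.'' The formula for $\det M_k(c)$ involves $2Nc - 2k - 1$ in the numerator of the correction term, which can be negative for large $k$, so it is not term-by-term obvious that $P_k(c) > 0$; one genuinely needs to use that $\psi_N(c) > 0$ and propagate. I would handle this by establishing that the $P_k(c)$ are monotone (decreasing) in $k$ for $c$ in the relevant range, so the minimum over $k \in \{0,\dots,N\}$ is attained at $k=N$, where it equals $\psi_N(c)$; this reduces everything to a single inequality. The computation showing $P_{k}(c) \ge P_{k+1}(c)$ will be the algebraically heaviest part, but it is the kind of estimate that the \texttt{Mathematica} notebook accompanying the paper can certify, and the structure strongly suggests it holds for all $c \ge 1$ (indeed for $c > \frac{N-1}{N}$ as in \cref{SubSubSec:Step1}).
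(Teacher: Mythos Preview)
Your approach is essentially the paper's: invoke Sylvester's criterion and read off the signs of the leading minors from \cref{Lem:detFormula}. However, the ``main obstacle'' you identify is not real, and the monotonicity argument you propose is unnecessary.

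You write that $2Nc-2k-1$ ``can be negative for large $k$,'' but it cannot in the relevant range. The formula \eqref{Eqn:detMk} is only used for $k\le N-1$, and since $c\ge 1$ we have
\[
2Nc-2k-1 \;\ge\; 2N-2(N-1)-1 \;=\; 1 \;>\; 0 .
\]
Combined with your own verification that each $2Nc+4Nci-2i^2+1>0$, this makes the parenthetical factor $1+\sum_{i=0}^k \frac{2Nc-2k-1}{2Nc+4Nci-2i^2+1}$ a sum of $1$ and nonnegative terms, hence trivially positive for every $k\le N-1$. So all the intermediate minors $\det M_k(c)$ are automatically positive, with no comparison to $\psi_N(c)$ or monotonicity in $k$ needed. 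The entire argument then collapses to: $W_c\succ 0$ iff $\det M_N(c)>0$ iff $\psi_N(c)>0$, exactly as the paper does in three lines.
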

\begin{proof}
    Recall from Sylvester's criterion that a symmetric matrix is positive definite if and only if all leading principal submatrices of a matrix have positive determinant. We can easily see from \eqref{Eqn:detMk} that for $k<N$, $\det M_k(c)$ is the product of all positive factors, so $\det M_k(c) > 0$. Therefore, by Sylvester's criterion, $W_c$ is positive definite if and only if $\det M_N(c)$. Finally, observe that in \eqref{Eqn:detMN}, $\det M_N(c)$ is the product of $\psi_N(c)$ with a sequence of positive factors. We therefore conclude that $\det M_N(c) > 0$ if and only if $\psi_N(c) > 0$ and the result follows.
\end{proof}

We now examine the matrix $Q_c$.
\begin{lemma}\label{Lem:Mat0PD}
    Let $Q_c$ be defined as in \eqref{Eqn:Mat0Def} with entries set as in \eqref{Eqn:certComponents}. Then $Q_c$ is positive definite.
\end{lemma}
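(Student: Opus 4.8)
The plan is to prove that $Q_c$ is positive definite by the same Sylvester-criterion strategy used for $W_c$ in \cref{Lem:Mat1PD}: compute the leading principal minors and show they are all strictly positive. The matrix $Q_c$ is tridiagonal, which makes this far more tractable than the full matrix $W_c$. Writing $P_k(c)$ for the determinant of the $k\times k$ leading principal submatrix of $Q_c$, the tridiagonal structure gives the three-term recurrence $P_k = 2r_k P_{k-1} - r_{k-1}^2 P_{k-2}$ for $2\le k\le N$ (with the bottom-right entry $1$ replacing $2r_{N+1}$ in the last step, i.e.\ $\det Q_c = 1\cdot P_N - r_N^2 P_{N-1}$), and base cases $P_0=1$, $P_1 = 2r_1$. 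Substituting $r_i = ic/(2Nc-i+1)$, the goal is to find a clean closed form for $P_k(c)$ and check positivity.

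First I would conjecture and then verify by induction an explicit product formula for $P_k(c)$, analogous to \eqref{Eqn:detMk}. A natural guess, motivated by the $c=1$ computation in \cite{FirstPEP} (where $Q_1$ has a known positive determinant) and by the shape of \eqref{Eqn:detMk}, is something of the form $P_k(c) = c^k\bigl(\text{positive linear-in-}c\text{ factor}\bigr)\prod_{i=1}^{k}\tfrac{(\text{positive quadratic})}{(2Nc-i+1)^2}$; the exact factors can be pinned down by matching the recurrence. Once a candidate formula is in hand, the induction step is a routine (if slightly tedious) algebraic identity verifying $P_k = 2r_k P_{k-1} - r_{k-1}^2 P_{k-2}$, which I would relegate to the \texttt{Mathematica} notebook or an appendix, exactly as the paper does for \cref{Lem:detFormula}. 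The final determinant $\det Q_c = P_N - r_N^2 P_{N-1}$ is then handled the same way.

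With the closed form established, positivity is immediate for $c\ge 1$ (indeed for $c>\tfrac{N-1}{2N}$, the range where \cref{SubSubSec:Step1} already guarantees $r_i>0$ and $r_{i+1}>r_i$): every factor in the product is manifestly positive since $2Nc-i+1>0$ and the quadratic/linear factors are sums of positive terms in that regime. Unlike $W_c$, there is no critical-threshold phenomenon here — $Q_c$ stays positive definite on the whole relevant interval — which is why the lemma is stated unconditionally. I would also double-check the two edge cases $k=1$ (giving $2r_1>0$) and the last step (where the corner entry is $1$, not $2r_{N+1}$) since those are where off-by-one errors in the recurrence typically hide.

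The main obstacle is purely computational: guessing the right closed-form expression for $P_k(c)$ and confirming the induction identity without sign or indexing errors. There is no conceptual difficulty — tridiagonal positive-definiteness via leading minors is standard — but the algebra with the rational entries $r_i=ic/(2Nc-i+1)$ is fiddly, so I expect the proof to consist of a short Sylvester's-criterion argument in the main text citing a determinant lemma, with the determinant lemma's inductive verification deferred to an appendix (and cross-checked symbolically). Once $Q_c\succ 0$, $W_c\succ 0$ (from \cref{Lem:Mat1PD} under $\psi_N(c)>0$), and the Schur-complement condition on the top-left $t$-entry of $S_c$ are all in place, positive semidefiniteness of $S_c$ (hence of $Z$) follows, completing \cref{SubSubSec:Step2}.
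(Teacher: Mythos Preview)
Your Sylvester-criterion plan would work, but the paper takes a much shorter and more direct route: it simply expands the quadratic form $y^TQ_cy$ and regroups it as a sum of squares with nonnegative coefficients. Concretely, the paper writes
\[
y^TQ_cy \;=\; \sum_{i=0}^{N-1} r_{i+1}(y_{i+1}-y_i)^2 \;+\; r_1 y_0^2 \;+\; \sum_{i=1}^{N-1}(r_{i+1}-r_i)y_i^2 \;+\; (1-r_N)y_N^2,
\]
and then invokes the sign facts $r_i>0$, $r_{i+1}-r_i>0$, and $1-r_N>0$ already established in \cref{SubSubSec:Step1} to conclude strict positivity for $y\neq 0$. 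This is a three-line argument requiring no determinant formula, no induction, and no symbolic verification.

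Your approach is not wrong---the tridiagonal minor recurrence is correct and a closed form could certainly be found and checked---but it is considerably heavier machinery than the problem needs. The sum-of-squares decomposition exploits the tridiagonal structure more efficiently: the off-diagonal entries $-r_i$ are exactly what one needs to complete the square against the diagonal $2r_i$, and the telescoping leaves behind precisely the increments $r_{i+1}-r_i$ and the boundary terms $r_1$, $1-r_N$. What your approach would buy is an explicit formula for $\det Q_c$, which might be of independent interest but is not needed anywhere else in the paper; what the paper's approach buys is brevity and transparency about \emph{why} positivity holds (it reduces directly to the monotonicity of the $r_i$).
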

\begin{proof}
    Consider any $y = (y_0,\dots,y_N)^T \neq 0$. We calculate
    \begin{align*}
        y^T Q_c y &= \sum_{i=0}^{N-1} 2 r_{i+1} y_i^2 - 2\sum_{i=0}^{N-1} r_{i+1} y_i y_{i+1} + y_N^2 \\
        & = \sum_{i=0}^{N-1} r_{i+1}(y_{i+1}-y_i)^2 + r_1 y_0^2 + \sum_{i=1}^{N-1} (r_{i+1}-r_i)y_i^2 + (1-r_N)y_N^2 \ .
    \end{align*}
    The positivity of the first and second terms is immediate, and it was shown in \cref{SubSubSec:Step1} that $r_{i+1}-r_i > 0$ and $1-r_N > 0$. Therefore, we have $y^T Q_c y>0$.
\end{proof}

Finally, we combine each of these results to prove the following claim
\begin{lemma}
    If $\psi_N(c)>0$ then $S_c(r,t)$ is positive semidefinite.
\end{lemma}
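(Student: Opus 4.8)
The plan is to exploit the block structure of $S_c(r,t)$ and use the Schur complement criterion. Recall $S_c$ has the form $\begin{pmatrix} t & q_c^T \\ q_c & \frac{1-h}{L}Q_c + \frac{h}{L}W_c\end{pmatrix}$. Since $h\in(0,1]$, both coefficients $\frac{1-h}{L}$ and $\frac{h}{L}$ are nonnegative, and at least one is positive; combined with \cref{Lem:Mat0PD} (which gives $Q_c\succ 0$) and \cref{Lem:Mat1PD} (which gives $W_c\succ 0$ under the hypothesis $\psi_N(c)>0$), the lower-right block $\frac{1-h}{L}Q_c+\frac{h}{L}W_c$ is positive definite, hence invertible. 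So $S_c\succeq 0$ if and only if the scalar Schur complement $t - q_c^T\left(\frac{1-h}{L}Q_c+\frac{h}{L}W_c\right)^{-1}q_c \geq 0$ is nonnegative.

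First I would reduce to showing this scalar inequality, i.e. $q_c^T\left(\frac{1-h}{L}Q_c+\frac{h}{L}W_c\right)^{-1}q_c \leq t$. The natural way to evaluate the left side is to observe that $q_c$ is built from the same $r_i$ data, and in fact the full matrix $S_c$ arises as (half of) the dual slack matrix $Z$, which by the dual feasibility identity already verified in \cref{App:ScIdentity} equals $vB_{0,\star} + \sum \lambda_{i,j}(A_{i,j}(h) + \frac{1}{2L}C_{i,j})$. The cleanest route is therefore to exhibit an explicit vector in the kernel (or a factorization) witnessing that the Schur complement is exactly zero: for the base case $c=1$, Drori and Teboulle's certificate has $S_1$ rank-deficient with the worst-case Huber instance lying in its kernel, so I expect $S_c$ to be positive semidefinite with the Schur complement vanishing (or nearly so) for all $c$ in the relevant range. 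Concretely, I would look for a vector $u$ with $\left(\frac{1-h}{L}Q_c+\frac{h}{L}W_c\right)u = q_c$ (guessing $u$ proportional to an affine sequence in the index, mirroring the worst-case iterate pattern), and then verify $q_c^T u = t$ by direct computation using the closed forms in \eqref{Eqn:certComponents}.

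An alternative, possibly cleaner, approach avoids inverting anything: write $S_c$ directly as a sum of rank-one outer products plus a positive-semidefinite remainder, paralleling the $y^TQ_cy$ computation in \cref{Lem:Mat0PD}. One decomposes the quadratic form $(z_0,\dots,z_{N+1})\mapsto z^TS_cz$ by completing the square on $z_0$ (the coordinate carrying the $t$ entry), which again produces precisely the Schur-complement condition; then one shows the resulting reduced form in $(z_1,\dots,z_{N+1})$ is a nonnegative combination of $Q_c$ and $W_c$ quadratic forms, both of which are nonnegative by the earlier lemmas. Either way the crux is the same scalar identity/inequality relating $t$, $q_c$, and the inverse (or the completed square), and I would defer the heavier algebra to the \texttt{Mathematica} notebook or an appendix.

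The main obstacle I anticipate is the explicit evaluation of $q_c^T\left(\frac{1-h}{L}Q_c+\frac{h}{L}W_c\right)^{-1}q_c$: unlike the $c=1$ case handled in \cite{FirstPEP}, the presence of the extrapolation parameter $c$ couples the $Q_c$ and $W_c$ contributions in the inverse, and there is no longer an obvious single Huber instance forcing the Schur complement to zero for generic $h$. I expect to need either a guessed closed form for the solution of $\left(\frac{1-h}{L}Q_c+\frac{h}{L}W_c\right)u=q_c$ whose components are rational in $c,N,i$ (verified by substitution), or a telescoping/summation argument analogous to the determinant computation in \cref{Lem:detFormula}. Once that identity is in hand, the nonnegativity of the Schur complement — and hence $S_c\succeq 0$ — follows, and with it $Z\succeq 0$, completing the dual feasibility verification needed for \cref{Thm:MainExtrapThm}.
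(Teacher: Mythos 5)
Your proposal follows the paper's proof essentially exactly: reduce to the scalar Schur complement of the positive-definite block $\frac{1-h}{L}Q_c+\frac{h}{L}W_c$ (positive definite as a convex combination of $Q_c\succ 0$ and $W_c\succ 0$) and show that complement vanishes by exhibiting a kernel vector, deferring the algebra to symbolic computation. The one piece you leave as a guess is that vector, which turns out to be $u_c=(\tfrac{2Nhc+1}{L},1,\dots,1)^T$ (constant rather than genuinely affine in the index); the paper verifies $S_c u_c=0$ in \texttt{Mathematica}, concludes $\det S_c=0$, and combines this with $\det\left(\frac{1-h}{L}Q_c+\frac{h}{L}W_c\right)>0$ to force the Schur complement to equal zero.
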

\begin{proof}
For simplicity, we will denote $S_c = S_c(r,t)$. Since $\psi_N(c)>0$, by \cref{Lem:Mat0PD,Lem:Mat1PD}, we know that both $Q_c$ and $W_c$ are positive definite. Observe that for $h\in(0,1]$, $\frac{1-h}{L}Q_c+\frac{h}{L}W_c$ must be positive definite as it is a convex combination of positive definite matrices. We can therefore consider the Schur complement of $\frac{1-h}{L}Q_c+\frac{h}{L}W_c$ in $S_c$. From a well-known result on the Schur complement, we know that $S_c$ is positive semidefinite if and only if $t - q_c^T\left( \frac{1-h}{L}Q_c + \frac{h}{L} W_c\right)^{-1} q_c \geq 0$. However, we also have the standard identity
    \begin{equation}\label{Eqn:SchurComp}
        \det S_c = \left(t - q_c^T\left( \frac{1-h}{L}Q_c + \frac{h}{L} W_c\right)^{-1} q_c\right) \det \left( \frac{1-h}{L}Q_c+\frac{h}{L} W_c\right)    
    \end{equation}
    Next define the vector $u_c = (\frac{2Nhc+1}{L},1,\dots,1)^T$. It is easily verified that $S_c u_c = 0$ (see \texttt{Mathematica proof 4.2}). Consequently, $\det S_c = 0$. But $\det \left( \frac{1-h}{L}Q_c+\frac{h}{L} W_c\right) > 0$, so from \eqref{Eqn:SchurComp} we must have 
    \begin{equation*}
        \left(t - q_c^T\left( \frac{1-h}{L}Q_c + \frac{h}{L} W_c\right)^{-1} q_c\right) = 0.
    \end{equation*}
    Therefore, $S_c$ is positive semidefinite.
\end{proof}
All that remains is to show selecting $c<\cCrit$ ensures $\psi_N(c)>0$. For $c>1$, the denominator $\detDenom{c}$ is always positive, so $\psi_N(c)$ is a continuous rational polynomial. And note that $\psi_N(1) = 1 > 0$. A quick calculation verifies that $\psi_N'(c)<0$ for all $c>1$ with $\lim_{c\rightarrow \infty}\psi_N(c) =-\infty$. From this, $\psi_N(c)$ has exactly one root larger than 1, which must be $\cCrit$. Moreover, by continuity, $\psi_N(c) \geq 0$ for $c \in [1,\cCrit]$ and $\psi_N(c) < 0$ for $c > \cCrit$.

\subsubsection{Deriving Claimed Extrapolation Guarantee from Weak Duality} \label{SubSubSec:Step3}
Since our proposed dual certificate is feasible, using that $v = \frac{1}{2}t = \frac{L}{4Nhc+2}$ as in \eqref{Eqn:dualVarVals}, we conclude that $d(c) \leq vD^2 = \frac{LD^2}{4Nhc+2}$. Then, by applying weak duality, for any $L$-smooth convex $f$,
\begin{equation*}
    f(\xSpecial) - f(\xStar) \leq p(c) \leq d(c) \leq \frac{LD^2}{4Nhc+2} \ .
\end{equation*}

\subsubsection{Claimed Simple Extrapolation Guarantee is Tight}  \label{SubSubSec:Step4}
Finally, we demonstrate that our bound \eqref{Eqn:NewBound} is tight. Our proof is an adjustment of the tight example in~\cite[Theorem 3.2]{FirstPEP} to account for the extrapolation $c$. Consider $x_0=D$ and $f=\phi_{L,\eta}$ with $\eta = D/(2Nhc+1)$.
Then gradient descent's iterates are $x_k = D(1-kh\eta)$ for $k=1,\dots,N$. Taking a simple extrapolation yields
    \begin{align*}
            \xSpecial &= x_0 + c(x_N-x_0) = D - Nhc\eta 
            = (Nhc+1)\eta \ . 
    \end{align*}
    Since $\xSpecial > \eta$, we can therefore evaluate
    \begin{align*}
        f(\xSpecial) -f(\xStar) &= f(\xSpecial) - 0= L\eta^2(Nhc+1) - \frac{L\eta^2}{2}  = \frac{LD^2}{4Nhc+2} \ . 
    \end{align*}

\subsubsection{Remarks on Assumptions of the Proof of Theorem~\ref{Thm:MainExtrapThm}}
We briefly draw attention to our claim in \cref{SubSubSec:Step2} above that $\psi_N(c) < 0$ for $c > \cCrit$. This fact shows that our particular certificate construction breaks down for $c>\cCrit$. Indeed, our numerical experiments in \cref{Sect:Numerics} indicate that \eqref{Eqn:NewBound} is, in fact, false for $c>\cCrit$.
Similarly, our certificate construction breaks down for constant stepsizes beyond the assumed $h \leq 1$. If $h>1$, then we can no longer guarantee that $\frac{1-h}{L}Q_c + \frac{h}{L}W_c$ is positive definite. A different family of dual certificates would be needed to provide guarantees under extrapolation with stepsizes longer than one.

    \section{Numerical Extensions} \label{Sect:Numerics}
Beyond enabling the preceding proofs, the associated performance estimation problems enable numerically surveying the effects of averaging and extrapolation. Here, we address three extensions of our theory, utilizing \texttt{Mosek}~\cite{mosek} via \texttt{JuMP}~\cite{JuMP} for any numerical solves. First, \cref{SubSect:OptimalExtrapolation} shows results paralleling those of \cref{Sect:SimpleExtrap} appear to hold when considering the worst-case gradient norm rather than the worst-case objective gap. For both of these settings, we compute the simple extrapolation factor minimizing the worst-case performance in both the objective gap and gradient norm, finding our theory nearly matches the optimal factor. Then, in contrast to the many prior works finding one-dimensional functions characterize the worst-case behavior of gradient descent's last iterate, \cref{SubSect:OneDimensional} computes its worst-case performance with and without a restriction to one-dimensional problems, showing these are not sufficient to describe the performance of extrapolations. Lastly, \cref{SubSect:NumericsOther} surveys the effect of simple extrapolations across a range of (accelerated) gradient methods, finding consistent, small benefits.

Recall the PEP problem~\eqref{Eqn:Interpolation} can be written equivalently as the SDP~\eqref{Eqn:primal} provided the dimension $\dimension$ of functions $f$ considered is at least $N+2$. Consequently, if no restriction is made on the dimension of the considered problems (i.e., high-dimension examples are allowed), we can numerically compute optimal solutions to the PEP problem via any commercial interior point method solver. Here, we consider both the worst case measured in terms of the objective gap at $\xSpecial$, as well as the gradient norm $\|\gSpecial\| = \| \nabla f(\xSpecial)\|$. To distinguish these, we denote the performance estimation problem maximizing the reported point's objective gap (as was considered throughout \cref{Sect:SimpleExtrap}) by
\begin{equation}\label{Eqn:pObj}
    p_{\text{obj}}(c) = \begin{cases} \max_{F,G} \quad & F a_{\star,\idxSpecial} \\
                    \text{s.t. } & Fa_{i,j}+\Tr G A_{i,j}(h) + \frac{1}{2L}\Tr G C_{i,j} \leq 0  \quad \quad \forall i\neq j \in \mathcal{I}\\
                    & G \succeq 0 \\
                    & \Tr G B_{0,\star} \leq D^2 \ . 
                \end{cases}
\end{equation}
We denote the alternative performance estimation problem maximizing the reported point's gradient norm, formulated as an SDP, by
\begin{equation}    \label{Eqn:pGrad}
    p^2_{\text{grad}}(c) = \begin{cases} \max_{F,G} \quad & \Tr G C_{\star,\idxSpecial} \\
                    \text{s.t. } & Fa_{i,j} + \Tr G A_{i,j}(h) + \frac{1}{2L}\Tr G C_{i,j} \leq 0 \quad \quad \forall i \neq j \in \mathcal{I}\\
                    & G \succeq 0 \\
                    & \Tr G B_{0,\star} \leq D^2  \ . 
                \end{cases}
\end{equation}
Note that due to the structure of $G$, our SDP in 
\eqref{Eqn:pGrad} solves for $\|g_\sigma\|^2$ rather than $\|g_\sigma\|$; we define $p_{\text{grad}}(c)$ to correspond to the non-squared value, $\|g_\sigma\|$.
In both cases, the equalities above hold by virtue of assuming $m\geq N+2$.

\subsection{Optimal Extrapolations for Objective Gap and Gradient Norm}\label{SubSect:OptimalExtrapolation}
Numerically, we find much of our results on the benefits of extrapolation on the worst-case final objective gap carry over to the worst-case final gradient norm.
However, the exact dual certificates $v,\lambda, Z$ underlying our proof do not constitute proofs for the setting of gradient norms. We expect dual certificates proving a benefit from small extrapolations exist but have not identified their structure and hence leave proving such parallel results open. Theory similar to the H-duality theory of~\cite{Hdual} may facilitate doing so without repeating and re-engineering the proof.

In \cref{Fig:cOptZoom}, we plot both $p_{\text{obj}}$ and $p_{\text{grad}}$ as the extrapolation factor $c$ varies for fixed $N=7$, $h=1$, and $L=D=1$. In overall structure, these two plots look very similar. For $c$ in an interval near one, \cref{Thm:MainExtrapThm} ensures $p_{\text{obj}}(c) = LD^2/(4Nhc+2)$. We see similarly in the gradient norm case that an interval near one has $p_{\text{grad}}(c) = LD/(Nhc+1)$, improving on its tight last iterate 
bound~\cite{Teboulle_4T_Analytical} of $\|\nabla f(x_N) \| \leq \frac{LD}{Nh+1}$. Reasoning mirroring the tightness of Huber functions in \cref{Thm:MainExtrapThm} provides an example attaining this conjectured gradient norm rate under extrapolation. Let $x_0 = D$ and again consider $f = \phi_{L,\eta}$ as in \eqref{Eqn:Huber} with $\eta = \frac{D}{Nhc+1}$. A simple calculation shows that $x_N = D-Nh\eta$ and 
\begin{equation*}
    \xSpecial = x_0 + c(x_N-x_0) = D - \frac{DNhc}{Nhc+1} = \frac{D}{Nhc+1} = \eta \ .
\end{equation*}
We then have $\|\gSpecial \| = \| \nabla f(\eta)\| = \frac{LD}{Nhc+1}$.

\begin{figure}
\centering
\includegraphics[width=1.0\textwidth]{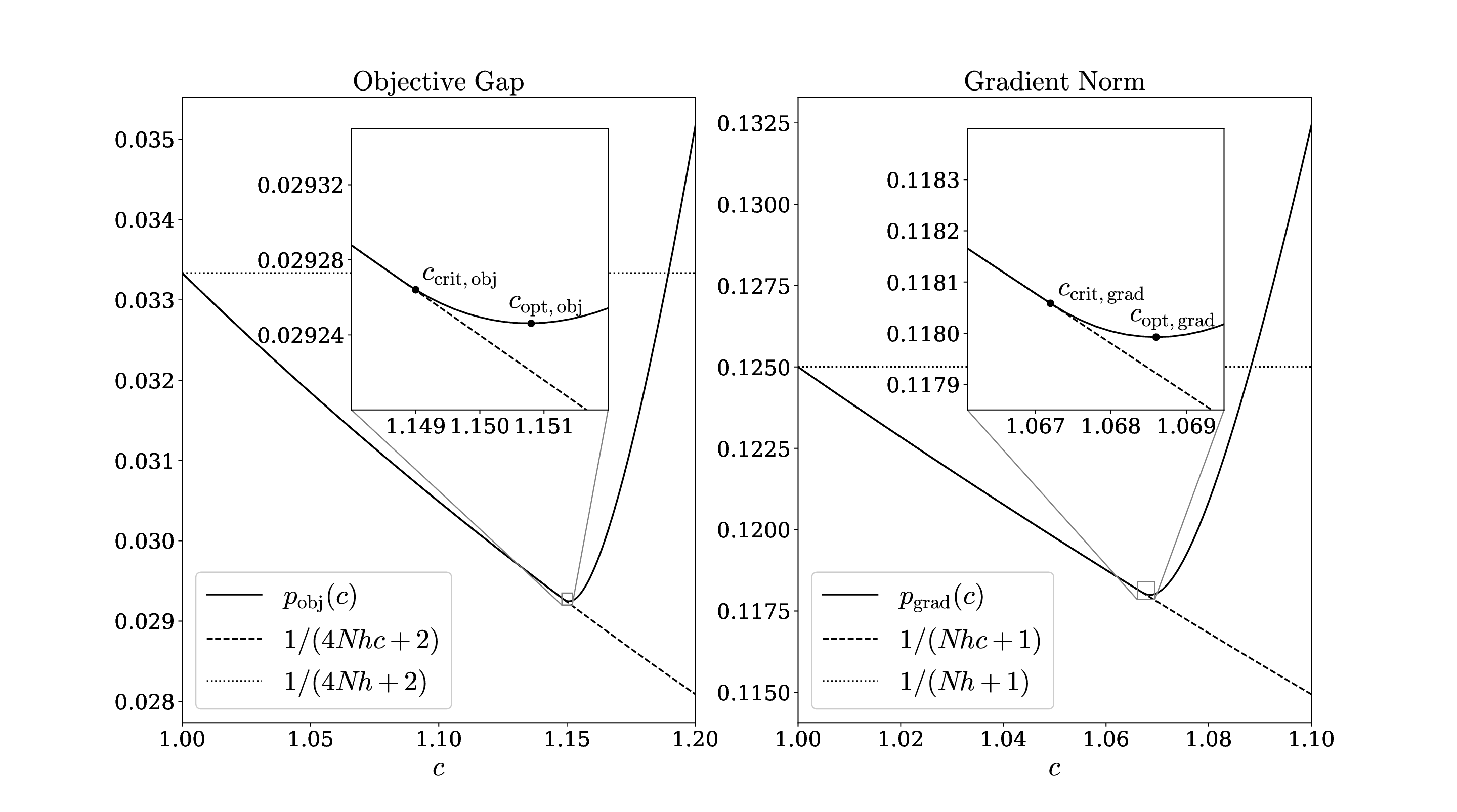}
\caption{Near-optimality of $\cCrit$ for objective gap and gradient norm with $N=7, L=D=h=1$.}
\label{Fig:cOptZoom}
\end{figure}

We consider two notable values of $c$ for each performance measure, the critical point $\cCrit$ where $p(c)$ deviates from the simple formulas $\frac{1}{4Nhc+2}$ and $\frac{1}{Nhc+1}$ (as discussed in \cref{Sect:SimpleExtrap}) and the point $\cOpt$ minimizing $p(c)$. We denote these by
\begin{align*}
    c_{\text{crit,obj}} &= \sup\left\{ c\geq 1 \mid p_{\text{obj}}(c) = \frac{LD^2}{4Nhc+2}\right\} \ , &
    c_{\text{opt,obj}} &= \mathrm{argmin\ } p_{\text{obj}}(c) \ , \\
    c_{\text{crit,grad}} &= \sup\left\{ c\geq 1 \mid p_{\text{grad}}(c) = \frac{LD}{Nhc+1}\right\} \ , &
    c_{\text{opt,grad}} &= \mathrm{argmin\ } p_{\text{grad}}(c) \ .
\end{align*}
For both performance measures in \cref{Fig:cOptZoom}, we see the critical point characterized by our theory is nearly but not quite optimal. Contrasting these measures, note that $\cCritGrad \neq c_{\text{crit,obj}}$, with $\cCritGrad$ consistently less than $c_{\text{crit,obj}}$, as shown in \cref{Tbl:cCritcOpt}. This indicates that extrapolations should be more conservative when seeking a small gradient norm. Moreover, numerically, we observe a faster rate of decay for $\cCritGrad$ as compared to $c_{\text{crit,obj}}$; this is clearly illustrated in \cref{Fig:cCritcOpt}. Specifically, we see that $\cCritGrad$ appears to be shrinking at rate $1+O(1/N)$, motivating the following conjecture.
\begin{conjecture}\label{Conj:GradNorm}
    Consider a gradient descent algorithm of fixed length $N$, constant stepsize $h \in (0,1]$, and with $\|x_0 - \xStar \| = D$. The critical value $\cCritGrad = 1 +\Theta(1/N)$, ensuring for any $c \in [1,\cCritGrad]$, simple extrapolation by factor $c$ has a tight worst-case bound of $\| \nabla f(\xSpecial)\| \leq \frac{LD}{Nhc+1}$.
\end{conjecture}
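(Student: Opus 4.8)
The plan is to mirror the PEP dual-certificate argument used for \cref{Thm:MainExtrapThm}, but applied to the gradient-norm performance estimation problem $p_{\text{grad}}(c)$ rather than $p_{\text{obj}}(c)$. Tightness of the target bound is already in hand for every $c\ge 1$: the Huber example $x_0=D$, $f=\phi_{L,\eta}$ with $\eta=D/(Nhc+1)$ displayed just above \cref{Fig:cOptZoom} yields $\|\gSpecial\| = LD/(Nhc+1)$ with equality. So the whole content is the matching upper bound, which by weak duality reduces to exhibiting a feasible point of the SDP dual of $p_{\text{grad}}(c)$ with objective value $\big(LD/(Nhc+1)\big)^2$. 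Here it is worth noting that the gradient-norm SDP is homogeneous of degree two in the problem data, so its optimal value is $\tau(c)^2 D^2$ and it suffices to certify $\tau(c)=L/(Nhc+1)$.

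Concretely, the dual of $p_{\text{grad}}(c)$ has the shape of \eqref{Eqn:dual} with two modifications: the linear constraint on the function values becomes the homogeneous $\sum_{i\ne j}\lambda_{i,j}a_{i,j}=0$ (the objective $\Tr G C_{\star,\idxSpecial}$ carries no $F$-dependence), and the matrix equation becomes $v B_{0,\star}+\sum_{i\ne j}\lambda_{i,j}\big(A_{i,j}(h)+\tfrac{1}{2L}C_{i,j}\big)-C_{\star,\idxSpecial}=Z\succeq 0$. I would guess the certificate by transporting the construction of \cref{SubSect:ProofOfExtrap} through the H-duality correspondence of~\cite{Hdual}: roughly, reverse the index order of the interpolation multipliers and transpose/reverse the block matrix $S_c(r,t)$, producing new parameters $\tilde r_i$, $\tilde t$, a matrix $\tilde S_c$, and a new ``critical polynomial'' $\tilde\psi_N(c)$ playing the role of $\psi_N(c)$, with $\cCritGrad$ its largest root. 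The $c=1$ specialization should recover the tight last-iterate gradient-norm certificate of Teboulle--Vaisbourd~\cite{Teboulle_4T_Analytical}.

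With a certificate in hand, feasibility is verified exactly as in \cref{SubSubSec:Step1,SubSubSec:Step2}: nonnegativity of $\tilde\lambda$ on an interval anchored at $c=1$ by elementary monotonicity of the $\tilde r_i$; and positive semidefiniteness of $\tilde Z$ via a Schur-complement reduction to positive definiteness of a tridiagonal-plus-rank-one matrix, established by a Sylvester-criterion computation giving an explicit determinant formula analogous to \cref{Lem:detFormula}, with the determinant vanishing along an explicit null vector (the analog of $u_c$). This localizes validity to $c\in[1,\cCritGrad]$ where $\tilde\psi_N(c)\ge 0$. To pin down the decay rate I would bound $\tilde\psi_N$ above and below near $c=1$ in the style of \cref{Prop:cCritBd}; I expect the governing sum to lack the harmonic ($\Theta(\log N)$) factor that appears in the objective-gap analysis — plausibly because the gradient-norm interpolation inequalities weight the late iterates differently — which is precisely what would make the root scale as $1+\Theta(1/N)$. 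A matching upper bound $\cCritGrad = 1+O(1/N)$ would come from a quadratic $f=\tfrac{L}{2}x^2$ overshoot example as in \cref{Prop:cHat}, where one checks $\|\nabla f(\xSpecial)\| = LD\big|1-c(1-(1-h)^N)\big|$ exceeds $LD/(Nhc+1)$ once $c$ passes $1+\Theta(1/N)$.

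The main obstacle is finding the exact dual certificate. The H-duality heuristic gives its shape but not its entries, and the homogeneous normalization $\sum\lambda_{i,j}a_{i,j}=0$ genuinely differs from the objective-gap case, so the multipliers attached to the ``from $\xStar$'' versus ``to $\xStar$'' interpolation inequalities must be re-balanced; arranging $\tilde S_c$ to be \emph{exactly} rank-deficient (so the Schur-complement slack is forced to zero rather than merely nonnegative) is the delicate point, and is what would upgrade the numerically observed equality in \cref{Fig:cOptZoom} to a theorem. If a clean guess resists, an alternative is to solve the PEP SDP symbolically for small $N$, read off the parametric form of the optimal dual solution, and then verify the resulting closed form for general $N$ — presumably the route by which \eqref{Eqn:dualVarVals}--\eqref{Eqn:Mat1Def} were found in the first place.
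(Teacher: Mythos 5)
This statement is a \emph{conjecture} in the paper, not a theorem: the authors explicitly state that the dual certificates underlying \cref{Thm:MainExtrapThm} ``do not constitute proofs for the setting of gradient norms,'' that they ``have not identified'' the structure of the certificates that would be needed, and they support the claim only with numerical PEP solves (\cref{Fig:cOptZoom}, \cref{Tbl:cCritcOpt}, \cref{Fig:cCritcOpt}) together with the Huber tightness example. Your proposal does not close this gap. The parts you actually establish are the easy ones and coincide with what the paper already has: the Huber instance with $\eta = D/(Nhc+1)$ gives the matching lower bound for every $c\ge 1$, and a quadratic overshoot instance in the spirit of \cref{Prop:cHat} gives $\cCritGrad \le 1+O(1/N)$. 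The entire substance of the conjecture --- that the inequality $\|\nabla f(\xSpecial)\|\le LD/(Nhc+1)$ actually \emph{holds} for all $c$ in an interval $[1,\,1+\Omega(1/N)]$ --- rests on exhibiting a feasible dual solution of the gradient-norm PEP, which you never do. You acknowledge this yourself (``the main obstacle is finding the exact dual certificate''); the H-duality transport, the guessed critical polynomial $\tilde\psi_N$, and the claim that the harmonic factor disappears are unverified heuristics. In particular, nothing in the proposal rules out the possibility that every certificate of the proposed form breaks down immediately for $c>1$, i.e., that extrapolation yields no exact gradient-norm improvement at all --- which is precisely the possibility the conjecture is meant to exclude.

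A secondary caution: even granting that a certificate family parameterized by $c$ exists and reduces at $c=1$ to the Teboulle--Vaisbourd last-iterate certificate, your argument for the lower bound $\cCritGrad \ge 1+\Omega(1/N)$ is circular --- you infer the absence of the $\log N$ factor from the desired conclusion (``which is precisely what would make the root scale as $1+\Theta(1/N)$'') rather than from any property of a concretely defined $\tilde\psi_N$. The proposal is a sensible research plan, essentially the one the paper itself suggests (including the fallback of solving the SDP symbolically for small $N$ to reverse-engineer the certificate), but it is not a proof, and the statement remains open.
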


If this conjecture is correct, simple extrapolations up to the critical $\cCritGrad$ would be a provably less effective strategy for reducing the gradient norm than for the objective gap. Whereas an extrapolation of size $1+\Theta(1/\sqrt{N\log(N)})$ gave the same improvement in objective value as $\Theta(\sqrt{N/\log(N)})$ additional gradient steps, a simple extrapolation of size $1+\Theta(1/N)$ would only improve the gradient norm guarantee by the same amount as a constant number of additional gradient steps.

\begin{figure}[tbp]
    \centering
    \begin{minipage}[b]{0.51\textwidth}
        \centering
        {\renewcommand{\arraystretch}{1.7}    \footnotesize
        \begin{tabular}{|c||c|c||c|c|}
        \hline
             $N$ & $c_\text{crit,obj}$ & $c_\text{opt,obj}$ & $\cCritGrad$ & $\cOptGrad$ \\
             \hline
             1  & 1.5   &  1.5     & 1.4142 & 1.4142 \\
             5  & 1.1800 & 1.1821  & 1.0931 & 1.0950 \\
             10 & 1.1220 & 1.1238  & 1.0471 & 1.0481 \\
             25 & 1.0739 & 1.0752  & 1.0182 & 1.0187 \\
             50 & 1.0507 & 1.0516  & 1.0086 & 1.0090 \\
             \hline
        \end{tabular}}
        \captionsetup{type=table}
        \caption{The critical and optimal extrapolation factors for objective gap and gradient norm.}
        \label{Tbl:cCritcOpt}
    \end{minipage}
    \hfill
    \begin{minipage}[b]{0.47\textwidth}
        \vspace{0pt}
        \centering
        \includegraphics[width=\textwidth]{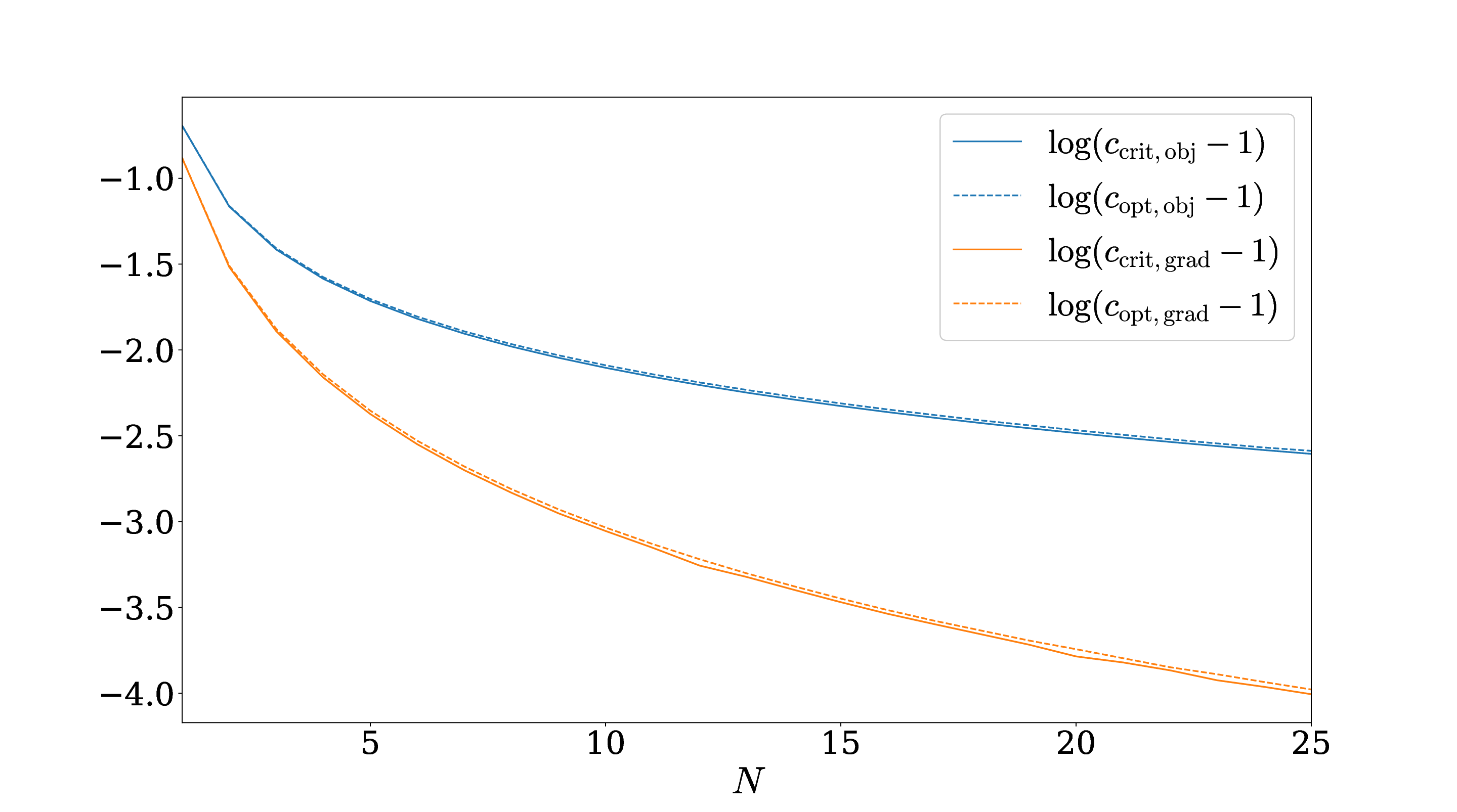}
        \caption{Similarity of critical and optimal factors and divergence between objective and gradient.}
        \label{Fig:cCritcOpt}
    \end{minipage}
\end{figure}

\subsection{Limitations of One-Dimensional Worst-Case Functions} \label{SubSect:OneDimensional}
For $c\leq \cCrit$, we have shown in \cref{Thm:MainExtrapThm} that there exists a one-dimensional function that attains our worst-case performance. Similarly, regarding averaging, in \cref{SubSect:AvgTheory}, we showed that one-dimensional functions were sufficient to describe the worst-case behavior of any averaging scheme $\sigma$. A natural question to ask in light of this is whether the same is true for simple extrapolation, that is, whether one-dimensional  functions are sufficient for attaining the worst-case behavior for all simple extrapolations. To answer this, we can modify our original SDP \eqref{Eqn:primal} by adding an additional constraint that $G = ww^T$ for some vector $w \in \R^{N+2}$. This yields the problem
\begin{equation} \label{Eqn:primal1D}
    p_{\text{obj,1D}}(c) := \begin{cases} \max_{F,w} \quad & F a_{\star,\idxSpecial} \\
                    \text{s.t. } & Fa_{i,j} + \Tr G A_{i,j}(h) + \frac{1}{2L}\Tr G C_{i,j} \leq 0, \quad\forall i \neq j \in \mathcal{I}\\
                    & G \succeq 0 \\
                    & \Tr G B_{0,\star} \leq D^2 \\
                    & G = ww^T \ .
                \end{cases}
\end{equation}

Due to the rank-one constraint, this problem becomes nonconvex, so we must now perform global rather than local optimization. However, for small $N$, this is still computationally feasible. In \cref{Fig:primal1DData}, we plot our results for $N=7$ with $h=1$. For $c < \cCrit$, we already know the worst-case function is $\phi_{L,\eta}$ from \cref{Thm:MainExtrapThm}. \cref{Fig:primal1DData} shows that for $c>\cCrit$, the worst-case performance, in general, deviates from the worst-case performance on one-dimensional objectives. Hence, characterizing the optimal factor $c_{\text{opt}}$ and measuring the (small) gap between it and $\cCrit$ will fundamentally require the design of worst-case problem instances beyond Huber functions and quadratics.

The dimension of problems required can be predicted by examining the rank of optimal primal solutions $G$ of~\eqref{Eqn:primal}. To see this, recall that the columns of the Cholesky decomposition, $G = H^T H$, gives the gradient vectors seen in that worst-case problem instance. Numerically, even for large $c$, we find the optimal $G$ is rank two, indicating the worst-case problem instances for $c>\cCrit$ only require one additional dimension. We show an example of one such worst-case function in \cref{Fig:Function2D}.

\begin{figure} 
    \centering
    \includegraphics[width=0.4\textwidth]{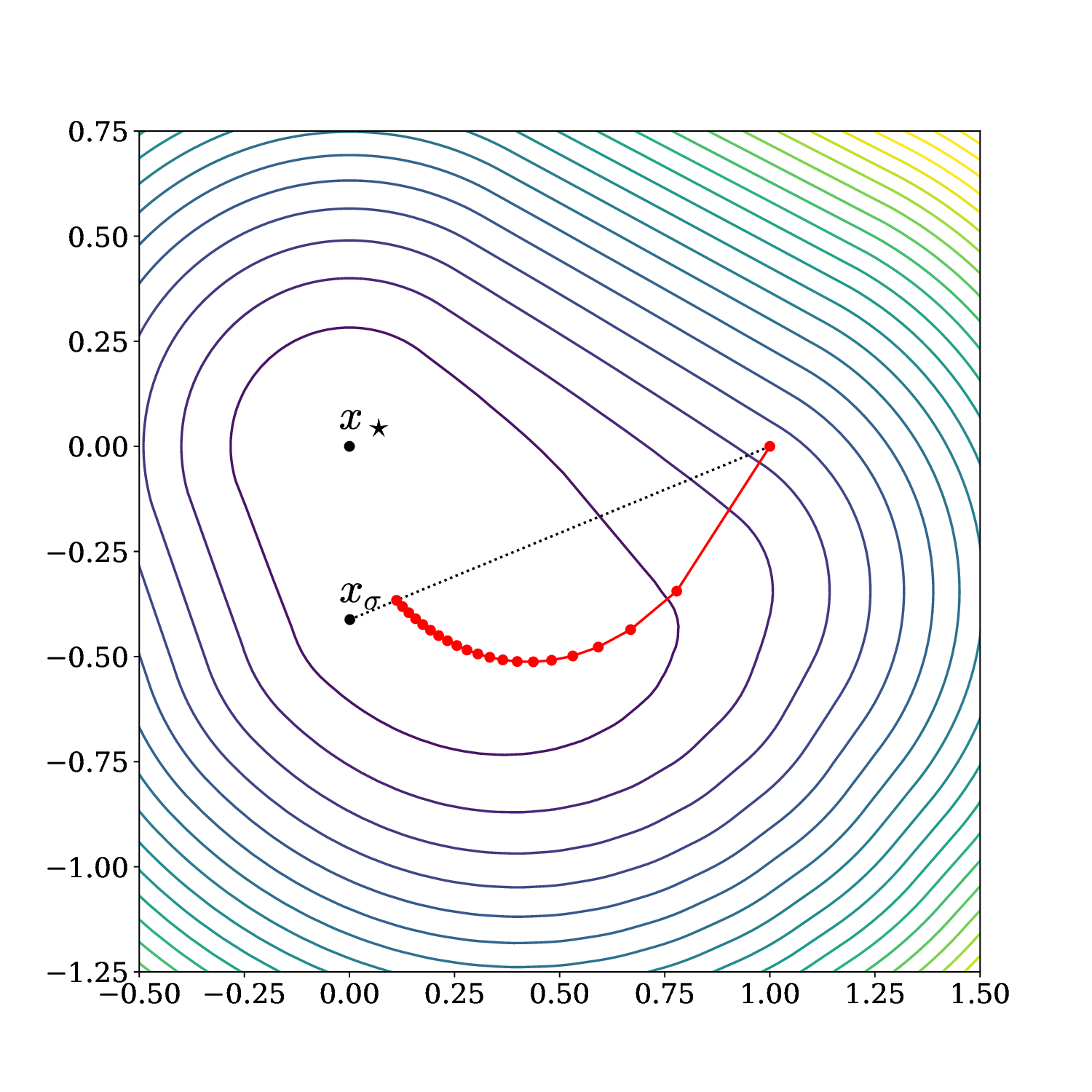}
    \caption{Level sets of a two-dimensional worst-case function for simple extrapolation with $N=20$ and $c = 1.125$.}
    \label{Fig:Function2D}
\end{figure}

Our numerical results indicate that among one-dimensional functions, the worst case is always a Huber function or a quadratic function. This can be seen in \cref{Fig:primal1DData} as $p_{\text{obj,1D}}$ appears to be piecewise smooth with two distinct pieces. The first piece, as discussed above, numerically equals $LD^2/(4Nhc+2)$, which is attained by considering $\phi_{L,\eta}$. The second piece numerically matches the curve $\frac{LD^2}{2}(1-c+c(1-h)^N)^2$; this is the exact value achieved by simple extrapolation on the quadratic function $f(x) = \frac{L}{2}x^2$. The analogous result for gradient norm seems to hold numerically as well. Therefore, in one dimension, the worst-case instances of simple extrapolation seem to be easily described. Setting $h=1$ and computing the minimum of these two functions, the best extrapolation factor in one dimension is
$ c=1-\frac{1}{4N} + \sqrt{\frac{1}{2N} + \frac{1}{16N^2}} $
which attains a convergence rate of $LD^2/(4N+\sqrt{8N+1}+1)$. This provides a lower bound on the magnitude of rate improvement that simple extrapolation that can achieve among problems of any dimension, which is within a log term of our \cref{Cor:ExtrapSpecific}.

\begin{figure}
\centering
\includegraphics[width=1.0\textwidth]{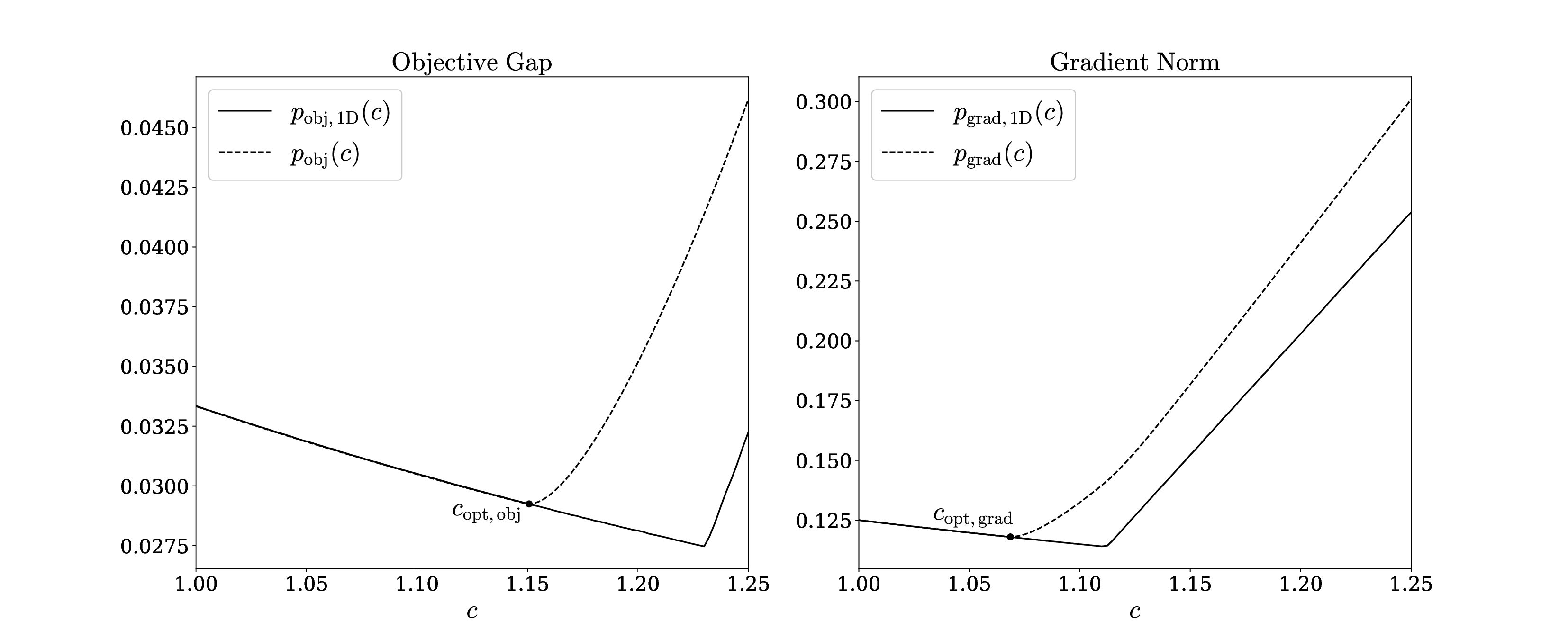}
\caption{Comparison of worst-case performance of smooth functions from the standard PEP \eqref{Eqn:pObj} and from a PEP restricted to one-dimensional functions \eqref{Eqn:primal1D} (and the analogous gradient norm problems) with $N=7$, $L=D=h=1$. For reference, we mark the location of $c_{\text{opt,obj}}$ and $c_{\text{opt,grad}}$. Note the values $c_{\text{crit,obj}}$ and $c_{\text{crit,grad}}$ are not distinguishable from the optimal values in this plot but can be seen in \cref{Fig:cOptZoom}}
\label{Fig:primal1DData}
\end{figure}

\subsection{Application of Simple Extrapolations to Other Algorithms}\label{SubSect:NumericsOther}
Our discussion of extrapolation up to this point has focused on gradient descent with constant stepsizes $h_k=h\in(0,1]$. This simplifying assumption enabled \cref{Sect:SimpleExtrap}'s exact characterization of the impact of extrapolation on worst-case performance. As our final numerical extension, we relax this restriction and consider the impact of simple extrapolation on a range of common first-order methods. In particular, we consider the collection of gradient descent stepsize selections introduced in \cref{SubSec:Tight-Bounds}, Nesterov's classic accelerated method~\cite{Nesterov}, Polyak's heavy ball method~\cite{HeavyBall_Polyak}, and the Optimal Gradient Method~\cite{OGM}. 
The performance estimation SDP~\eqref{Eqn:primal} was previously defined explicitly for gradient descent via the construction of our $\mathbf{x_i}$ vectors, and consequently the matrices $A_{i,j}(h)$ and $B_{i,j}(h)$. This restriction is not fundamental as PEP can apply to any gradient method by adjusting the definition of $\mathbf{x_i}$ to match the method's update.

Although PEP techniques can also naturally generalize to describe proximal or projected gradient methods, extrapolations do not make sense in such settings. Reporting a point outside the convex hull of the algorithm's iterates may set $\xSpecial$ as an infeasible point. Consequently, we focus on unconstrained problems.

The methods considered possess a range of different orders of convergence guarantees for their last iterate. The gradient descent schemes with stepsize bounded above by two and Polyak's heavy ball method~\cite{HeavyBall_Convex} all have the last iterates objective gap converges at a rate of $O(1/N)$ in the worst case. Gradient descent with silver stepsizes~\cite{Silver_Accel} has the last iterate's objective gap converge at rate $O(1/N^{1.2716\dots})$. The last iterate of Nesterov's accelerated method~\cite{Nesterov} attains the optimal order of convergence among all gradient methods of $O(1/N^2)$. Even better, the Optimal Gradient Method (OGM)~\cite{OGM} (discussed using PEP techniques) attains the best possible $O(1/N^2)$ rate exactly among all gradient methods, see~\cite{Drori2016TheEI} for the matching lower bound.

\begin{figure}
\centering
\includegraphics[width=0.8\textwidth]{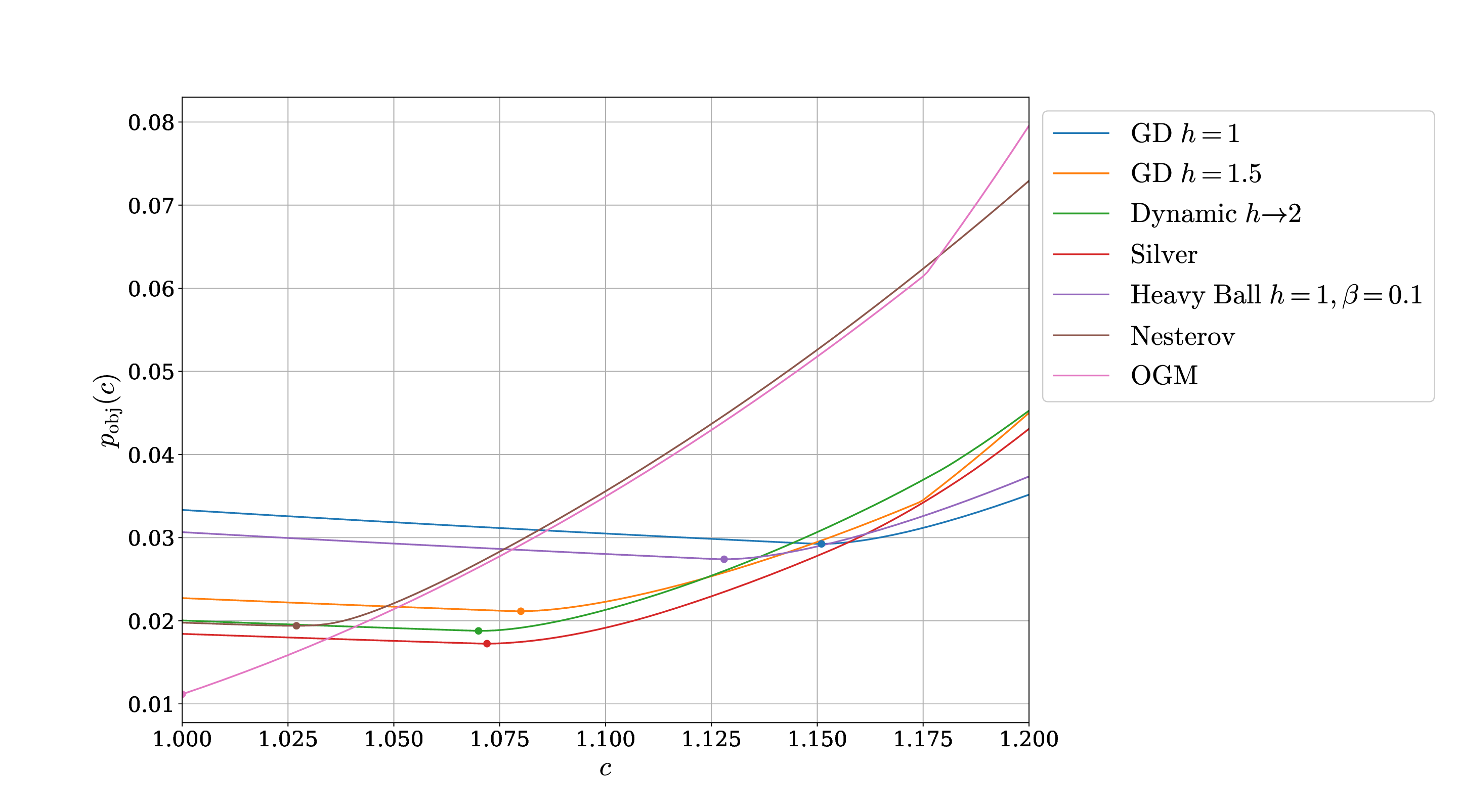}
\caption{Effect of simple extrapolation on worst-case reported objective gap of various gradient methods with $N=7$, $L=D=1$. The optimal choice of $c$ for each algorithm is marked by a dot.}
\label{Fig:AlgComparison}
\end{figure}

\paragraph{Worst-Case Performance} We first consider the effect of simple extrapolation on the worst-case performance of these algorithms, before applying to concrete examples. \cref{Fig:AlgComparison} shows the worst-case objective gap for each considered algorithm as the simple extrapolation factor varies, with $N=7$ and $L=D=1$. We can see that every method except OGM has a non-negligible convergence improvement from applying a simple extrapolation. This holds even for Nesterov acceleration, which already has an optimal convergence order.
Note that, as one should expect, extrapolation cannot improve the convergence guarantee of OGM as the method is already exactly optimal. While exact comparisons between the optimal amount of extrapolation for each algorithm remain dependent on $N$, we find faster (more optimized) algorithms consistently benefited less from extrapolation: Gradient descent with $h=1$'s $O(1/N)$ convergence rate benefits from larger extrapolations than Silver's $O(1/N^{1.2716\dots})$ rate, which in turn benefits from larger extrapolations than Nesterov's $O(1/N^2)$ rate. To this end, \cref{Tbl:cOptData} shows the numerically computed optimal extrapolation factor for each method up to $N=63$. This shows a widespread (small) benefit from simple extrapolation and hints at the potential broad applicability of this as a simple, effectively-free (in computation and memory costs) postprocessing step for smooth convex optimization.

\begin{table}
    \centering\footnotesize
    \begin{tabular}{|c|c|c|c|c|c|c|c|}
    \hline
         N & GD & GD & Dynamic & Silver & Heavy Ball & Nesterov & OGM\\
         & $h=1$ & $h=1.5$ & $h \to 2$ & & $h=1,\beta=0.1$ & & \\
         \hline
         3  & 1.2447 & 1.1033 & 1.0884 & 1.1029 & 1.1855 & 1.1254 & 1.0 \\
         7  & 1.1508 & 1.0795 & 1.0703 & 1.0718 & 1.1279 & 1.0267 & 1.0 \\
         15 & 1.0991 & 1.0585 & 1.0519 & 1.0454 & 1.0868 & 1.0065 & 1.0 \\
         31 & 1.0667 & 1.0419 & 1.0372 & 1.0284 & 1.0596 & 1.0017 & 1.0 \\
         63 & 1.0454 & 1.0297 & 1.0275 & 1.0181 & 1.0407 & 1.0002 & 1.0 \\
         \hline
    \end{tabular}
    \caption{Numerically computed optimal simple extrapolation factor $\cOpt$ for various methods and $N$.}
    \label{Tbl:cOptData}
\end{table}

\paragraph{Real-World Experiments}
Finally, based on this worst-case analysis, we apply simple extrapolation to two types of regression problems. We first consider the logistic regression problem of minimizing
\begin{equation*}
    f(x) := \sum_{i=1}^m \log(1+\exp(-b_i \langle a_i, x \rangle ))
\end{equation*}
for class labels $b_i \in \{-1,1\}$ and feature vector $a_i \in \R^n$. We perform our experiments on the \textit{duke breast-cancer} (referred to as \textit{duke} going forward), \textit{diabetes}, and \textit{breast-cancer} datasets from LIBSVM \cite{LIBSVM}, along with randomly generated data ($x_0 = 0$, $a_i$ with iid standard normal entries, and $b_i$ sampled iid uniformly from $\{-1,1\}$). For our Lipschitz constant, we can set $L = \frac{\|M^T \! M\|}{4}$ where the rows of $M$ are given by $b_i a_i^T$.
Second, we consider least squares with a Huber regularization term weighted by some $\lambda > 0$:
\begin{equation*}
    f(x) := \|Ax-b\|^2 + \lambda \phi_{1,\eta}(x) \ .
\end{equation*}
We consider data from the \textit{bodyfat}, \textit{cpusmall}, and \textit{triazines} datasets from LIBSVM, along with randomly generated data ($x_0 = 0$, $a_i$ and $b_i$ with iid standard normal entries). Viewing this as a smooth analog of Lasso regularization, we follow the convention of \cite{Moreau_LassoConvention}, setting $\lambda = \frac{0.01}{m}\|A^T b\|_\infty$, $\eta = \frac{\lambda}{\|A^T \! A\|}$, and our Lipschitz constant then as $L = 2\|A^T \! A \|$. 

In \cref{Tbl:Experiments}, we report for each of these experiments the relative improvement from extrapolation in comparison to the terminal iterate: $(f(x_N) - f(x_\sigma))/(f(x_N) - f(x_\star))$. We include the relative improvement for the worst-case problem instance, as calculated from PEP, but we note that this is just for reference: since these experiments are not theoretical worst-case instances, one cannot directly compare with the worst-case values. We see a wide range of behavior across our problem types and datasets, emphasizing that our results in \cref{Sect:SimpleExtrap} only guarantee improvements on worst-case behaviors. For logistic regression, the \textit{duke} dataset benefits significantly from simple extrapolation, both at small scale $N=7$ and large scale $N=1000$ (simple extrapolation nearly achieves a 5\% improvement for gradient descent with $h=1$). On the other hand, for the \textit{breast-cancer} dataset, simple extrapolation has a small negative effect. For the least squares problem, extrapolation does not appear to be effective on the real-world data, but performs well on random data. Thus, our numerics show simple extrapolation can be beneficial beyond worst-case analysis, however, its efficacy varies.

\begin{table}
    \centering\scriptsize
    \begin{tabular}{|c|c|c|c|c|c|c|c|c|c|c|}
        \hline
         & & & \multicolumn{4}{|c|}{Logistic Regression} & \multicolumn{4}{|c|}{Least Squares with Huber Regularization} \\
        \hline
             & Algorithm & Worst Case & \textit{duke} & \textit{diabetes} & \textit{breast-cancer}  & Random & \textit{bodyfat} & \textit{cpusmall} & \textit{triazines} & Random\\
        \hline
         & GD $h=1$ & 0.1225 & 0.0908  & 0.0200 & -0.0022  & 0.2671 &
         -1.4275 & -2.0673 & -0.2780 & 0.3500 \\
         & GD $h=1.5$ & 0.0696 & 0.0488 & 0.0127 & -0.0006 & 0.1530 &
         -0.3492 & -0.5070 & -0.0500 & 0.2210 \\
         $N=7$ & Dynamic & 0.0597 & 0.0454 & 0.0120 & -0.0005 &  0.1411 &
         -0.2955 & -0.4294 & -0.0395 & 0.2128 \\
         & Silver & 0.0639 & 0.0442 & 0.0117 & -0.0005  & 0.1392 &
         -0.2771 & -0.4027 & -0.0360 & 0.2086 \\
         & Heavy Ball & 0.1063 & 0.0821 &  0.0178 & -0.0015  & 0.2428 &
         -1.1364 & -1.6468 & -0.2242 & 0.3104 \\
         & Nesterov & 0.0190 & 0.0237 & 0.0051 & -0.0001 & 0.0682  &
         -0.0511 & -0.0743 & -0.0038 & 0.0517 \\
         \hline
         $N=1000$ & GD1 & 0.0099 & 0.0479 & 0.0034 & 0 &  0.0680 & -0.0344 & -0.0105 & -0.0038 & -0.7092 \\
         \hline

    \end{tabular}
    \caption{Experimental results for simple extrapolation across different algorithms. For $N=7$, extrapolation factor is chosen as the numerically optimal $\cOpt$ for each algorithm. For $N=1000$, we use $\cCrit$, as calculated from \eqref{Eqn:psi}. Values show relative change in performance: $(f(x_N) - f(x_\sigma))/(f(x_N) - f(x_\star))$. Results for random data are obtained by solving 100 randomly generated problem instances ($m=50$, $n=200$) and taking the median value of the relative change in performance.}
    \label{Tbl:Experiments}
\end{table}

    \paragraph{Acknowledgements.} This work was supported in part by the Air Force Office of Scientific Research under award number FA9550-23-1-0531. We thank Adrien Taylor for providing kind and direct feedback on this work's positioning among the growing PEP literature.
    
    {\small
    \bibliographystyle{unsrt}
    \bibliography{references}
    }
    \appendix
    \phantomsection
\addcontentsline{toc}{chapter}{Appendix}
\renewcommand{\thesubsection}{A.\arabic{subsection}}
\renewcommand{\theequation}{A.\arabic{equation}}
\renewcommand{\thethm}{A.\arabic{thm}}
\setcounter{equation}{0}
\setcounter{section}{0}
\setcounter{thm}{0}

\setcounter{tocdepth}{1}

\subsection{Verifying Construction of Section~\ref{SubSect:ProofOfExtrap}}\label{App:ScIdentity}
First we claim $\sum_{i\neq j} \lambda_{i,j} a_{i,j} - a_{\star,\idxSpecial} = 0$. This is equivalent to 
\begin{equation*}
    \sum_{j=-1}^N \lambda_{j,k} - \sum_{j=-1}^N \lambda_{k,j} = \begin{cases}
        -1 \quad & \text{if } k=-1 \\
        1 \quad & \text{if } k=N \\
        0 \quad & \text{otherwise} \ .
    \end{cases}
\end{equation*}
One can easily verify this as
\begin{align*}
    & \sum_{j=-1}^N \lambda_{j,-1} - \sum_{j=-1}^N \lambda_{-1,j} = 0 - \left(r_1 + (r_2-r_1) + \dots + (r_N-r_{N-1}) + (1-r_N) \right) = -1 \\
    & \sum_{j=-1}^N \lambda_{j,0} - \sum_{j=-1}^N \lambda_{0,j} = r_1 - r_1 = 0 \\
    & \sum_{j=-1}^N \lambda_{j,k} - \sum_{j=-1}^N \lambda_{k,j} = (r_{k+1}-r_k + r_k) - r_{k+1} = 0 \quad \quad \quad \forall k=1,\dots,N-1 \\
    & \sum_{j=-1}^N \lambda_{j,N} - \sum_{j=-1}^N \lambda_{N,j} = (1-r_N + r_N) - 0 = 1 \ .
\end{align*}

Our second claim is that $Z = \frac{1}{2}S_c(r,t)$. By construction, we can rewrite $S_c(r,t)$ as
\begin{equation}\label{Eqn:ScDecomp}
    S_c(r,t)_{i,j} = \begin{cases}
        t \quad & \text{if } i=j=-1 \\
        -r_1 \quad & \text{if } i=-1,j=0 \text{ or } i=0,j=-1\\
        r_j - r_{j+1} \quad & \text{if } i=-1, 1\leq j \leq N-1 \text{ or } 1\leq i \leq N-1,j=-1 \\
        r_N - 1 \quad & \text{if } i=-1, j=N \text{ or } i=N,j=-1 \\
        \frac{h}{L}(c-r_N) \quad & \text{if } i=N, 0\leq j \leq N-2 \text{ or } 0\leq i\leq N-2,j=N \\
        \frac{2r_{j+1}}{L} \quad & \text{if } 0\leq i=j \leq N-1 \\
        \frac{1}{L} \quad & \text{if } i=j=N \\
        \frac{1}{L}( hr_{j+1}-r_j) \quad & \text{if } i=j+1\leq N-1 \text{ or } j=i+1\leq N-1 \\
        \frac{1}{L}(hc-r_N) \quad & \text{if } i=N, j=N-1 \text{ or } i=N-1,j=N \\
        \frac{h}{L}(r_{j+1}-r_j) \quad & \text{if } j+2 \leq i \leq N-1  \text{ or } i+2 \leq j \leq N-1\ .
    \end{cases}
\end{equation}
Now using our selected dual variables \eqref{Eqn:dualVarVals}, and expanding our special matrices, we can write
\begin{align*}
    Z & = v B_{0,\star} + r_1\left(A_{\star,0} + \frac{1}{2L} C_{\star,0}\right) + (1-r_N)\left(A_{\star,\idxSpecial}+\frac{1}{2L}C_{\star,\idxSpecial}\right) + \sum_{k=1}^{N-1}(r_{k+1}-r_k)\left(A_{\star,k}+\frac{1}{2L}C_{\star,k}\right) \\
    & \qquad + \sum_{k=1}^{N-1} r_k\left(A_{k-1,k} + \frac{1}{2L}C_{k-1,k}\right) + r_N\left(A_{N-1,\idxSpecial} + \frac{1}{2L}C_{N-1,\idxSpecial}\right) \\
    & = \frac{t}{2} \xz \xz^T +\frac{r_1}{2}\left(\gG{0} -(\xz \g{0}^T + \g{0} \xz^T) \right) \\
    & \qquad  + \frac{1-r_N}{2}\left( \frac{1}{L}\gG{\idxSpecial} - (\xz \g{\idxSpecial}^T + \g{\idxSpecial} \xz^T) + \frac{ch}{L}\sum_{l=0}^{N-1} (\gg{\idxSpecial}{l}) \right)\\
    & \qquad + \sum_{k=0}^{N-1}\frac{r_{k+1}-r_k}{2}\left(\frac{1}{L}\gG{k} -(\xz \g{k}^T + \g{k} \xz^T) + \frac{h}{L}\sum_{l=0}^{k-1} (\gg{k}{l}) \right) \\
    & \qquad + \sum_{k=1}^{N-1} \frac{r_k}{2L} \left((h-1)(\gg{k-1}{k}) + \gG{k-1} + \gG{k}\right) \\
    & \qquad + \frac{r_N}{2L} \left(\gG{N-1} + \gG{\idxSpecial} + (hc-1) (\gg{N-1}{\idxSpecial}) + h(c-1)\sum_{l=0}^{N-2} (\gg{l}{\idxSpecial})\right) \ .
\end{align*}
It is now straightforward to check that $Z_{i,j} = \frac{1}{2}S_c(r,t)_{i,j}$ in each of the cases in \eqref{Eqn:ScDecomp}.

\subsection{Proof of Determinant Formula (Lemma~\ref{Lem:detFormula})} \label{App:ProofOfDetFormula}
    For simplicity, we will follow the same notation used by~\cite[Lemma 3.3]{FirstPEP}, but we will assign more generalized values. However, note many of these variables overlap with other unrelated concepts in this paper, so these definitions should be recognized only in the context of this proof.
    
    First, observe that $W_c$ has a very specific structure. Using this structure, we can write the $k$-th principal submatrix for $M_k$ as 
    \begin{equation*}
        M_k = \begin{pmatrix}
            d_0 & a_1 & a_2 & \dots & a_{k-1} & a_k \\
            a_1 & d_1 & a_2 & & a_{k-1} & a_k \\
            a_2& a_2 & d_2 & & a_{k-1} & a_k \\
            \vdots & & & \ddots & & \vdots \\
            a_{k-1} & a_{k-1} & a_{k-1} & & d_{k-1} & a_k \\
            a_k & a_k & a_k& \dots & a_k & d_k
        \end{pmatrix}
    \end{equation*}
    and we consider its determinant. Drori and Teboulle derived the following recursion equation
    \begin{equation}\label{Eqn:DetRecursion}
        \det M_k = \left(d_k - \frac{2a_k^2}{a_{k-1}}+\frac{a_k^2 d_{k-1}}{a_{k-1}^2}\right) \det M_{k-1} - a_k^2\left(1-\frac{d_{k-1}}{a_{k-1}}\right)^2\det M_{k-2}
    \end{equation}
    with $\det M_0 = d_0$ and $\det M_1 = d_0 d_1 - a_1^2$. We set 
    \begin{align*}
        d_i &= 2r_{i+1} = \frac{2c(i+1)}{2Nc-i} \quad && i=0,\dots,N-1 \\
        d_N &= 1 \\
        a_i &= r_{i+1}-r_i = \frac{(i+1)c}{2Nc-i} - \frac{ic}{2Nc-i+1} \quad && i=1,\dots,N-1\\
        a_N &= c-r_N = c-\frac{Nc}{2Nc-N+1} \ .
    \end{align*}
    Then denoting $\alpha_k = d_k - \frac{2a_k^2}{a_{k-1}}+\frac{a_k^2 d_{k-1}}{a_{k-1}^2}$ and $\beta_k = a_k^2\left(1-\frac{d_{k-1}}{a_{k-1}}\right)^2$, for $k=0,\dots,N-1$, we have
    \begin{equation}
    \alpha_k = 4c\frac{\left((2Nc+1)k - k^2-1 \right)}{(2Nc-k)^2}
    \end{equation}
    and
    \begin{equation}
    \beta_k = \frac{c^2\left(4kNc-2Nc-2k^2 +4k- 1\right)^2}{(2Nc-k)^2(2Nc-k+1)^2} \ .
    \end{equation}
    The recursive equation \eqref{Eqn:DetRecursion} can then be expressed nicely as
    \begin{equation}\label{Eqn:DetRecursion2}
    \det M_k = \alpha_k \det M_{k-1} - \beta_k \det M_{k-2} \ .
    \end{equation}
    We further define for $i=0,\dots, N-1$,
    \begin{align*}
    f_i & = c^{i+1} \\
    g_i & = 2Nc-2i-1 \\
    x_i & = \frac{1}{\detDenom{c}} \\
    y_i & = \frac{\detDenom{c}}{(2Nc-i)^2} \ . 
    \end{align*}
    
    Next, we verify that \eqref{Eqn:detMk} holds for the base cases, $M_0$ and $M_1$:
    \begin{align*}
    \det M_0 &= c\left(1+\frac{2Nc-1}{2Nc+1}\right) \frac{2Nc+1}{(2Nc)^2} \\
     & = \frac{1}{N} = d_0\\
     \det M_1 &= c^2\left(1+ \frac{2Nc-3}{2Nc+1} + \frac{2Nc-3}{6Nc-1}\right) \left( \frac{2Nc+1}{(2Nc)^2} \frac{6Nc-1}{(2Nc-1)^2} \right) \\
    & = \frac{28N^2c^2-20Nc-1}{4N^2(2Nc-1)^2} = d_0d_1 - a_1^2 \ .
    \end{align*}
    
    Now, we claim the recurrence~\eqref{Eqn:DetRecursion2} ensures that for $k=0,\dots,N-1$,
    \begin{equation*}
    \det M_k = f_k\left(1+g_k\sum_{i=0}^{k} x_i\right) \prod_{i=0}^k y_i \ .
    \end{equation*}
    This result follows from the argument in~\cite[Lemma 3.3]{FirstPEP}, using our new values for $f_k$, $g_k$, etc., defined above. This completes our proof for $k<N$.
    
    Finally, we consider $M_N$. Note that the formulas for $d_N$ and $a_N$ differ from their counterparts for $k<N$. So to derive an equation for $M_N$, we simply apply the recursion \eqref{Eqn:DetRecursion2} to $M_{N-1}$ and $M_{N-2}$ and simplify. 
    We compute $\alpha_N$ and $\beta_N$ as
    \begin{align*}
    \alpha_N &= d_N - \frac{2a_N^2}{a_{N-1}}+\frac{a_N^2 d_{N-1}}{a_{N-1}^2}
    = \frac{\zeta(c)}{(2Nc+1)^2} \\
    \beta_N &= a_N^2\left(1-\frac{d_{N-1}}{a_{N-1}}\right)^2 
     = \left(2Nc-2N+1\right)^2\frac{c^2\left(4N^2c-2Nc-2N^2 +4N- 1\right)^2}{(2Nc+1)^2 (2Nc-N+1)^2}
    \end{align*}
    where
    \begin{align*}
    \zeta(c) = 16N^3(N-1)c^4 &-8N^2(5N^2-9N+4)c^3+4N(8N^3-22N^2+20N-5)c^2 \\
    &-2(4N^4-16N^3+21N^2-13N+2)c+1 \ .
    \end{align*}
    Applying our recursion \eqref{Eqn:DetRecursion2}, and after significant simplification (see \texttt{Mathematica proof A.2} for verification), we arrive at our desired formula
    \begin{equation*}
    \det M_N(c) = \det W_c = c^N\left(1 - \psiNum{c}\sum_{i=0}^{N-1} x_i \right) \prod_{i=0}^{N-1} y_i \ .
    \end{equation*}

\subsection{Proof of Proposition~\ref{Prop:cCritBd}} \label{App:ProofOfCBound}
Recall our definition of $\psi_N(c)$:
\begin{equation*} 
    \psi_N(c) = 1 - \sum_{i=0}^{N-1} \frac{\psiNum{c}} {\detDenom{c}} \ .
\end{equation*}
Before going forward, we perform a simple change of variables with $\subVar$, where $c=1+\frac{\subVar}{2N}$. This will significantly simplify our analysis to follow. So removing the $N$ subscript, we define $\psi$ as a function of $s$:
\begin{equation}\label{Eqn:psiSubVar}
    \psi_N(c) = \psi(\subVar) := 1 - \frac{1}{2N}\sum_{i=0}^{N-1} \frac{\subVar(\subVar+1)(2N+\subVar+1)}{2N+4Ni+(2i+1)\subVar-2i^2+1} \ .
\end{equation}
We now focus on approximating $\subVarCrit$, where $\subVarCrit$ is the largest root of $\psi(\subVar)$, or equivalently, $\subVarCrit = 2N(\cCrit - 1)$.

Our approach for approximating $\subVarCrit$ will be to find upper and lower bounding functions for $\psi(\subVar)$. We can then find a closed form expression for the roots of these bounding functions, which will in turn act as bounds for $\subVarCrit$.
Define
\begin{equation}\label{Eqn:psiLowerBd}
    \psiL(\subVar) = 1- \left(\harmonicUpperBd \right)\frac{\subVar(\subVar+1)(2N+1)}{4N^2}
\end{equation}
where $\gamma \approx 0.5772$ is the Euler-Mascheroni constant. We claim below that $\psiL(\subVar)$ is a valid lower bound for $\psi(\subVar)$ and defer the calculations to the following section.

\begin{lemma}\label{Lem:psiL}
    For all $\subVar>0$, $\psiL(\subVar) < \psi(\subVar)$.
\end{lemma}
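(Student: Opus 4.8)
\textbf{Proof proposal for Lemma~\ref{Lem:psiL}.} The plan is to reduce $\psiL(s) < \psi(s)$ to an explicit inequality about a finite sum and then bound that sum term-by-term. Write $D_i := 2N + 4Ni + (2i+1)s - 2i^2 + 1$ for the $i$-th denominator in \eqref{Eqn:psiSubVar} and set $B_i := (2N+1) + 4Ni - 2i^2$, so $D_i = B_i + s(2i+1)$; note $B_i = (2N+1) + 2i(2N-i) > 0$ for $0 \le i \le N-1$, hence also $D_i > 0$ for $s \ge 0$. Subtracting the definitions \eqref{Eqn:psiSubVar} and \eqref{Eqn:psiLowerBd} and factoring out the common positive factor $\frac{s(s+1)}{2N}$, the claim $\psiL(s) < \psi(s)$ is equivalent, for $s>0$, to
$$\sum_{i=0}^{N-1} \frac{2N+s+1}{D_i} < \frac{2N+1}{2N}\left(\harmonicUpperBd\right) .$$

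To prove this I would first remove the dependence on $s$ in each summand: I claim $\frac{2N+s+1}{D_i} \le \frac{2N+1}{B_i}$ for all $i$ and all $s \ge 0$. Cross-multiplying (everything positive) and cancelling, this reduces to $B_i \le (2N+1)(2i+1)$, i.e.\ to $-2i^2 \le 2i$, which holds for every $i \ge 0$. Next I would bound $B_i$ below by $2N(i+1)$, using
$$B_i - 2N(i+1) = 1 + 2i(N-i) \ge 1 > 0 \qquad (0 \le i \le N-1),$$
which gives a strict inequality. Chaining these,
$$\sum_{i=0}^{N-1}\frac{2N+s+1}{D_i} \le \sum_{i=0}^{N-1}\frac{2N+1}{B_i} < (2N+1)\sum_{i=0}^{N-1}\frac{1}{2N(i+1)} = \frac{2N+1}{2N}\sum_{k=1}^{N}\frac{1}{k} .$$
Finally I would invoke the standard harmonic-number estimate $\sum_{k=1}^{N}\frac1k < \log N + \gamma + \frac{1}{2N}$ (valid for all $N\ge 1$), which turns the last expression into exactly $\frac{2N+1}{2N}\left(\harmonicUpperBd\right)$, completing the proof; the inequality is strict owing to the strict bound $B_i > 2N(i+1)$.

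I do not expect any real obstacle here: the argument is a short chain of elementary manipulations plus one classical harmonic-sum bound. The only mildly delicate point is bookkeeping in choosing the intermediate denominator estimate — bounding $B_i$ by $2N(i+1)$ rather than something looser — so that the resulting sum is precisely a harmonic sum matching the $\log N$ term deliberately built into the definition of $\psiL$ in \eqref{Eqn:psiLowerBd}; a weaker choice would not recover the claimed bounding function. One should also verify positivity of all denominators (immediate from $i \le N-1 < 2N$) before cross-multiplying in the first step.
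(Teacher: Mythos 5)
Your proposal is correct and follows essentially the same route as the paper's proof: your term-by-term comparison $\frac{2N+s+1}{D_i} \le \frac{2N+1}{B_i}$ is exactly the paper's intermediate function $\hat{\psi}_\ell$ step, your bound $B_i > 2N(i+1)$ matches the paper's denominator estimate, and both conclude with the same harmonic-sum upper bound $\sum_{k=1}^N \frac{1}{k} \le \harmonicUpperBd$. All steps check out, including the positivity of the denominators and the source of strictness.
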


So $\psiL(\subVar)$ is a lower bound for $\psi(\subVar)$, with a much simpler form. Observe that $\psiL(\subVar)$ is quadratic with respect to $\subVar$, it is concave, and $\psiL(0)=1>0$, so it must have a single positive root, which we will call $\subVar_\ell$. This root $\subVar_\ell$ is a lower bound for $\subVarCrit$.
We can easily find $\subVar_\ell$ by the quadratic equation:
\begin{equation*}
    \subVar_\ell = -\frac{1}{2} + \frac{1}{2}\sqrt{1+\frac{16N^2}{(2N+1)(\harmonicUpperBd)}} \ .
\end{equation*}
Therefore, we can define $c_\ell < \cCrit$ by
\begin{equation*}
    c_\ell = 1 -\frac{1}{4N} + \sqrt{\frac{1}{16N^2}+\frac{1}{(2N+1)(\harmonicUpperBd)}} \ .
\end{equation*}
This bound $c_\ell$ gives us the precise form we present in \cref{Prop:cCritBd}.

We now derive an upper bound $c_u$ through a similar approach. Define
\begin{equation}\label{Eqn:psiUpperBd}
    \psiU(\subVar) = 1-\left(\harmonicLowerBd \right) \frac{\subVar(\subVar+1)(3N+1)}{12N^2}
\end{equation}
with $\gamma$ again the Euler-Mascheroni constant. We again defer the calculations to the following section.

\begin{lemma}\label{Lem:psiU}
    For all $0<\subVar\leq N$, $\psiU(\subVar) > \psi(\subVar)$.
\end{lemma}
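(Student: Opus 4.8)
The plan is to prove \cref{Lem:psiU} in close parallel with \cref{Lem:psiL}, the difference being that the restriction $0<\subVar\le N$ lets us use a sharper comparison quadratic $\psiU$ in place of the cruder, globally valid $\psiL$. Starting from the form~\eqref{Eqn:psiSubVar} of $\psi$ and the definition~\eqref{Eqn:psiUpperBd} of $\psiU$, I would first subtract the common constant $1$ and cancel the common positive factor $\subVar(\subVar+1)$ (legitimate since $\subVar>0$), reducing the claimed inequality $\psiU(\subVar)<\psi(\subVar)$ to the equivalent estimate
\begin{equation*}
    \sum_{i=0}^{N-1}\frac{2N+\subVar+1}{2N+4Ni+(2i+1)\subVar-2i^2+1} \;<\; \left(\harmonicLowerBd\right)\frac{3N+1}{6N} \qquad \text{for all } \subVar\in(0,N].
\end{equation*}
The whole statement therefore reduces to bounding this $\subVar$-dependent sum.

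Next I would remove the dependence on $\subVar$. Rewriting each denominator as $(2i+1)(2N+\subVar)-2i^2+1$, a short derivative computation shows each summand is monotone in $\subVar$; together with the restriction $\subVar\in(0,N]$, this lets one replace the sum by the explicit $\subVar$-free comparison sum obtained at the relevant endpoint, which is where the hypothesis $\subVar\le N$ plays its role. What remains is a purely numerical inequality comparing a partial harmonic-type sum $\sum_{i=0}^{N-1}\tfrac{1}{(\text{quadratic in }i)}$ with constant coefficients against $\left(\harmonicLowerBd\right)\tfrac{3N+1}{6N}$. I would establish it using the same sharp elementary estimates for $\sum 1/(i+a)$---integral comparison together with the standard $\log N+\gamma+O(1/N)$ refinement---that underlie the analysis of $\psiL$, and handle the few smallest $N$ by direct computation, since the harmonic estimates are weakest there.

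The step I expect to be the main obstacle is the bookkeeping of constants. For $\psiL$ the comparison may be lossy (it only has to hold for all $\subVar>0$), whereas here it must stay tight enough that the positive root of $\psiU$---namely $\subVar_u=2N(c_u-1)$---still brackets $\subVarCrit$ from above, so the passage from $\tfrac{2N+\subVar+1}{(2i+1)(2N+\subVar)-2i^2+1}$ down to the model harmonic summand can afford almost no slack. Arranging the term-by-term bounds so that the aggregate lands on exactly the coefficient $\tfrac{3N+1}{12N^2}$ and the factor $\harmonicLowerBd$ appearing in $\psiU$, rather than on something weaker, is the delicate part; the remainder is routine calculus and harmonic-number manipulation, which is why the detailed computation is deferred to the following section.
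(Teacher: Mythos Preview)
Your proposal has a fundamental direction error, inherited from a typo in the lemma statement. The intended (and needed) inequality is $\psiU(\subVar)>\psi(\subVar)$, not $\psiU(\subVar)<\psi(\subVar)$: the whole point of $\psiU$ is to sit \emph{above} $\psi$ so that its positive root $\subVar_u$ upper-bounds $\subVarCrit$. The paper's own proof establishes exactly $\psiU>\psiUHat\ge\psi$. Your reduced target
\[
\sum_{i=0}^{N-1}\frac{2N+\subVar+1}{2N+4Ni+(2i+1)\subVar-2i^2+1}\;<\;\Bigl(\harmonicLowerBd\Bigr)\frac{3N+1}{6N}
\]
is therefore the wrong way round, and is in fact false: as $\subVar\to 0^+$ the left side tends to $\sum_{i=0}^{N-1}\tfrac{2N+1}{2N+4Ni-2i^2+1}$, which (by the very estimate used for $\psiL$) is of order $H_N$, while the right side is of order $H_N/2$. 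So no amount of bookkeeping will close that gap.

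Your underlying mechanics, however, are correct and coincide with the paper's argument once the inequality is flipped. Writing each summand as $\dfrac{2N+\subVar+1}{(2i+1)(2N+\subVar)-2i^2+1}$, the $\subVar$-derivative has numerator $-2i(i+1)$, so the sum is nonincreasing in $\subVar$. For the \emph{correct} direction one therefore evaluates at the minimum, $\subVar=N$, which is precisely where the hypothesis $\subVar\le N$ enters; this yields $\sum_{i=0}^{N-1}\tfrac{3N+1}{3N+6Ni-2i^2+1}$---exactly the paper's intermediate $\psiUHat$. The remaining step is the termwise bound $3N+6Ni-2i^2+1<6N(i+1)$ followed by the harmonic lower bound $H_N\ge \harmonicLowerBd$, giving $\psiU>\psiUHat\ge\psi$. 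Note that for the direction you attempted, the restriction $\subVar\le N$ is useless (the supremum of the sum is at $\subVar\to 0^+$), which is another signal that the stated inequality is backwards.
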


We claim that the largest root $\subVar_u$ of $\psi_u(\subVar)$ satisfies $\subVar_u \leq N$. Solving our quadratic equation we have
\begin{equation*}
    \subVar_u = -\frac{1}{2} + \frac{1}{2}\sqrt{1+\frac{48N^2}{(3N+1)(\harmonicLowerBd)}}
\end{equation*}
and one can easily check $\subVar_u \leq N$ for $N > 1$. Consequently, by the same argument as for the lower bound, $\subVar_u > \subVar_\mathrm{crit}$. We then define $c_u > \cCrit$ by
\begin{equation*}
    c_u = 1 -\frac{1}{4N} + \sqrt{\frac{1}{16N^2}+\frac{3}{(3N+1)(\harmonicLowerBd)}} \ . 
\end{equation*}

To summarize, from \cref{Lem:psiL,Lem:psiU}, we know for all $N$ that $c_\ell \leq \cCrit \leq c_u$.
The proposition's first claim follows by applying \cref{Thm:MainExtrapThm} to $c_\ell < \cCrit$. The second claim similarly can be obtained by verifying that $1 + 1/(4\sqrt{N \log(N)}) < c_\ell < \cCrit$

Lastly, we observe these upper and lower bounds suffice to determine the asymptotic behavior of $\cCrit$. We claim that $c_\ell = 1 + \Theta\left(\frac{1}{\sqrt{N \log(N)}}\right)$ and similarly $c_u =  1 + \Theta\left(\frac{1}{\sqrt{N \log(N)}}\right)$, from which we can conclude $\cCrit =  1 + \Theta\left(\frac{1}{\sqrt{N \log(N)}}\right)$.
These claimed asymptotic behaviors of the upper and lower bounds follow from the simple limit calculations
\begin{align*}
    \lim_{N\to\infty} \frac{c_\ell-1}{\frac{1}{\sqrt{N\log(N)}}} & = \lim_{N \to \infty} \left(-\frac{1}{4N} + \sqrt{\frac{1}{16N^2}+\frac{1}{(2N+1)(\harmonicUpperBd)}}\right) \sqrt{N \log(N)} \\
    & = \lim_{N\to\infty} - \frac{\sqrt{N \log(N)}}{4N} + \sqrt{\frac{N\log(N)}{16N^2} + \frac{N \log(N)}{(2N+1)(\harmonicUpperBd)}} = \frac{\sqrt{2}}{2} \ , \\
    \lim_{N\to\infty} \frac{c_u-1}{\frac{1}{\sqrt{N\log(N)}}} & = \lim_{N \to \infty} \left(-\frac{1}{4N} + \sqrt{\frac{1}{16N^2}+\frac{3}{(3N+1)(\harmonicLowerBd)}}\right) \sqrt{N \log(N)} \\
    & = \lim_{N\to\infty} - \frac{\sqrt{N \log(N)}}{4N} + \sqrt{\frac{N\log(N)}{16N^2} + \frac{3N \log(N)}{(3N+1)(\harmonicLowerBd)}} = 1 \ .
\end{align*}

\subsection{Proof of Lemma~\ref{Lem:psiL}}

    We prove this lower bound in two steps, using an intermediate function $\psiLHat(\subVar)$. Define 
    \begin{equation*}
        \psiLHat(\subVar) = 1-\frac{1}{2N}\sum_{i=0}^{N-1}\frac{\subVar(\subVar+1)(2N+1)}{2N+4Ni-2i^2+1} \ .
    \end{equation*}
    We first show that $\psiLHat(s) \leq \psi(s)$.
    We calculate
    \begin{align*}
            \psi(\subVar) - \psiLHat(s) &= \left(1 - \frac{1}{2N}\sum_{i=0}^{N-1} \frac{\subVar(\subVar+1)(2N+\subVar+1)}{2N+4Ni+(2i+1)\subVar-2i^2+1}\right) - \left(1-\frac{1}{2N}\sum_{i=0}^{N-1}\frac{\subVar(\subVar+1)(2N+1)}{2N+4Ni-2i^2+1}\right) \\
            & =\frac{\subVar(\subVar+1)}{2N} \left(\sum_{i=0}^{N-1}\frac{2N+1}{2N+4Ni-2i^2+1} - \sum_{i=0}^{N-1} \frac{2N+\subVar+1}{2N+4Ni+(2i+1)\subVar-2i^2+1} \right) \\
            & = \frac{\subVar(\subVar+1)}{2N} \left(\sum_{i=0}^{N-1} \frac{2i(i+1)\subVar}{(2N+4Ni+(2i+1)\subVar-2i^2+1)(2N+4Ni-2i^2+1)} \right)\\
            & \geq 0 \ .
    \end{align*}
    For the final inequality, we use the fact that both terms of the denominator are always positive for $i=0,\dots,N$.
    Next we show that $\psiL(\subVar) < \psiLHat(\subVar)$. Recalling our definition of $\psi(s)$ from \eqref{Eqn:psiSubVar}, we have:
    \begin{align*}
            \psiLHat(\subVar) & = 1-\frac{1}{2N}\sum_{i=0}^{N-1}\frac{\subVar(\subVar+1)(2N+1)}{2N(1+2i-i\frac{i}{N}+\frac{1}{2N})}\\
            & > 1-\frac{1}{2N}\sum_{i=0}^{N-1}\frac{\subVar(\subVar+1)(2N+1)}{2N(1+i)} \\
            & = 1-\frac{\subVar(\subVar+1)(2N+1)}{4N^2}\sum_{i=0}^{N-1} \frac{1}{i+1} \\
            & \geq 1-\frac{\subVar(\subVar+1)(2N+1)}{4N^2}\left( \harmonicUpperBd \right)\\
            & = \psiL(\subVar)
    \end{align*}
    where the second inequality uses the harmonic series upper bound $\sum_{i=1}^N \frac{1}{i} \leq \harmonicUpperBd$.

\subsection{Proof of Lemma~\ref{Lem:psiU}}
    We again define an intermediate function $\psiUHat(\subVar)$:
    \begin{equation*}
        \psiUHat(\subVar) = 1-\frac{1}{2N}\sum_{i=0}^{N-1} \frac{\subVar(\subVar+1)(3N+1)}{3N+6Ni-2i^2+1} \ .
    \end{equation*}
    We first show that $\psiUHat(\subVar) \geq \psi(\subVar)$. Recalling our definition of $\psi(s)$ from \eqref{Eqn:psiSubVar}, we have:
    \begin{align*}
            \psi(\subVar) - \psiUHat(\subVar) &= \left(1 - \frac{1}{2N}\sum_{i=0}^{N-1} \frac{\subVar(\subVar+1)(2N+\subVar+1)}{2N+4Ni+(2i+1)\subVar-2i^2+1}\right) - \left(1-\frac{1}{2N}\sum_{i=0}^{N-1} \frac{\subVar(\subVar+1)(3N+1)}{3N+6Ni-2i^2+1}\right) \\
            & = \frac{\subVar(\subVar+1)}{2N} \left( \sum_{i=0}^{N-1} \frac{3N+1}{3N+6Ni-2i^2+1} - \frac{2N+\subVar+1}{2N+4Ni+(2i+1)\subVar-2i^2+1}\right) \\
            & = \frac{\subVar(\subVar+1)}{2N} \left( \sum_{i=0}^{N-1} \frac{-2i(i+1)(N-\subVar)}{(3N+6Ni-2i^2+1)(2N+4Ni+(2i+1)\subVar-2i^2+1}\right) \\
            & \leq 0
    \end{align*}
    where we use the fact that $s \leq N$.
    Finally, we show that $\psiU > \psiUHat(\subVar)$.
    \begin{align*}
        \psiUHat(\subVar) & = 1 - \frac{1}{2N}\sum_{i=0}^{N-1}\frac{\subVar(\subVar+1)(3N+1)}{3N(1+2i-\frac{2i^2}{3N}+\frac{1}{3N})} \\
        & < 1 - \frac{1}{2N}\sum_{i=0}^{N-1}\frac{\subVar(\subVar+1)(3N+1)}{3N(2+2i)} \\
        & = 1-\frac{\subVar(\subVar+1)(3N+1)}{12N^2}\sum_{i=0}^{N-1}\frac{1}{i+1} \\
        & \leq 1-\frac{\subVar(\subVar+1)(3N+1)}{12N^2}\left( \harmonicLowerBd \right) \\
        & = \psiU(\subVar) 
    \end{align*}
    where the second inequality uses the harmonic series lower bound $\sum_{i=1}^N \frac{1}{i} \geq \harmonicLowerBd$.

\end{document}